\newcommand{\nc}{\newcommand}
\newcommand{\beq}{\begin{equation}}
\newcommand{\eeq}{\end{equation}}
\newcommand{\barr}{\begin{array}}
\newcommand{\earr}{\end{array}}
\newcommand{\beqar}{\begin{eqnarray}}
\newcommand{\eeqar}{\end{eqnarray}}
\newtheorem{theorem}{Theorem}[section]
\newtheorem{corollary}[theorem]{Corollary}
\newtheorem{lemma}[theorem]{Lemma}
\newtheorem{prop}[theorem]{Proposition}
\theoremstyle{definition}
\newtheorem{definition}[theorem]{Definition}
\newtheorem{conjecture}[theorem]{Conjecture}
\theoremstyle{remark}
\newtheorem{remit}[theorem]{Remark}
\newtheorem{exit}[theorem]{Example}
\newenvironment{rem}{\begin{remit}\rm}{\end{remit}}
\newenvironment{ex}{\begin{exit}\rm}{\end{exit}}
\newcommand{\diag}{{\rm diag}}
\newcommand{\CC}{{\mathbb C }}
\newcommand{\ZZ}{{\mathbb Z }}
\newcommand{\PP}{ {\mathbb P } }
\newcommand{\QQ}{{\mathbb Q }}
\newcommand{\cala}{\mathcal{A}}
\newcommand{\calb}{\mathcal{B}}
\newcommand{\calq}{\mathcal{Q}}
\newcommand{\cals}{\mathcal{S}}
\newcommand{\calv}{\mathcal{V}}
\newcommand{\calw}{\mathcal{W}}
\newcommand{\lieg}{{\mathfrak g}}
\newcommand{\lieh}{{\mathfrak h}}
\newcommand{\liet}{{\mathfrak t}}
\nc{\lieq}{{\mathfrak q}}
\nc{\liez}{{\mathfrak z}}
\nc{\lieqs}{{\lieq}^*}
\nc{\liegs}{{\lieg}^*}
\nc{\liep}{{\mathfrak p}}
\nc{\lieps}{{\liep}^*}
\newcommand{\im}{ \,{\rm Im} \,}
\newcommand{\GL}{\mathrm{GL}}
\newcommand{\SL}{\mathrm{SL}}
\newcommand{\Sp}{\mathrm{Sp}}
\newcommand{\SO}{\mathrm{SO}}
\newcommand{\so}{\mathfrak{so}}
\newcommand{\spc}{\mathfrak{sp}}
\newcommand{\slc}{\mathfrak{sl}}
\def\a{\alpha}
\def\b{\beta}
\def\g{\gamma}
\def\t{\theta}
\def\l{\lambda}
\newcommand{\grass}{\mathrm{Gr}}
\newcommand{\tS}{\tilde{S}}
\newcommand{\tF}{\tilde{F}}
\nc{\umax}{{U_{\max}}}
\newcommand{\bu}{\mathbf{u}}
\newcommand{\bj}{\mathbf{j}}
\newcommand{\bi}{\mathbf{i}}
\newcommand{\bv}{\mathbf{v}}
\newcommand{\bw}{\mathbf{w}}
\newcommand{\br}{\mathbf{r}}
\newcommand{\te}{\tilde{e}}
\title{On the Popov-Pommerening conjecture for linear algebraic groups}
\author{Gergely B\'erczi}
\email{berczi@maths.ox.ac.uk}
\address{Mathematical Institute, University of Oxford, Andrew Wiles Building, OX2 6GG Oxford, UK} 
\keywords{Invariants, non-reductive groups}
\thanks{This work was partially supported by the Engineering and Physical Sciences 
Research Council [grant numbers   GR/T016170/1,EP/G000174/1].}
\begin{document}

\begin{abstract}
Let $G$ be a reductive group over an algebraically closed subfield $k$ of $\CC$ of characteristic zero, $H \subseteq G$ an observable subgroup normalized by a maximal torus of $G$ and $X$ an affine $k$-variety acted on by $G$. Popov and Pommerening conjectured in the late 70's that the invariant algebra $k[X]^H$ is finitely generated. We prove the conjecture for 1) subgroups of $\SL_n(k)$ closed under left (or right) Borel action and for 2) a class of Borel regular subgroups of classical groups. We give a partial affirmative answer to the conjecture for general regular subgroups of $\SL_n(k)$.
\end{abstract}

\maketitle
\section{Introduction}

Let $k$ be an algebraically closed subfield of $\CC$ of characteristic zero, $G$ an affine algebraic group over $k$,
and $X$ an affine $k$-variety on which $G$ acts rationally. There is an induced
action of $G$ on the coordinate ring $k[X]$ given by $(g\cdot f)(x) = f(g^{-1}x )$ for $g\in G, f\in k[X]$ and $x\in X$. The invariant subalgebra is $k[X]^G= \{f\in  k[X]|g\cdot f =f \text{ for all } g\in G\}$. Invariant theory studies the structure of this algebra and one of its fundamental problems is to characterise those actions where $k[X]^G$ is finitely generated. 

When $G$ is reductive $k[X]^G$ is finitely generated, due to Mumford \cite{mumford},
Nagata \cite{nagata} and Haboush \cite{haboush}. Since Nagata's counterexample from 1958 \cite{nagata} we know that for non-reductive groups the invariant algebra is not necessarily finitely generated. In fact Popov \cite{Popov} proved that finite generation for arbitrary ring $k[X]^G$ implies that $G$ is reductive. 

Invariant rings for non-reductive group actions have been extensively studied over the last 60 years. Finite generation has been proved in many interesting situations; however, characterisation of those actions with finitely generated invariant rings is still seems to be hopeless. 
Weitzenb\"ock in \cite{Weitzenbock} (and later Seshadri \cite{Seshadri}) proved that finite generation holds if $G$ is the additive group $k^+$ of an algebraically closed field $k$ of characteristic $0$, $X$ is an
affine $k$-space and the action of $k^+$ on $X$ extends to $\SL(2,k)$. Later Hochschild, Mostow and
Grosshans generalised this result by showing that if $G$ is reductive, $H$ is the unipotent radical of some
parabolic subgroup of $G$ and $G$ acts rationally on $X$, then $k[X]^H$ is finitely
generated (see \cite{hm,Grosshans2}). A natural generalisation of parabolic subgroups of the reductive group $G$ are the subgroups normalized by a maximal torus of $G$: these are generated by root subgroups corresponding to a closed set of positive roots and we call them regular subgroups. The following conjecture was formulated almost simultaneously in the late 1970's by Popov and Pommerening. 

%first formulated in several private letters and seminar talks in Moscow in 1984 by Popov and later it was addressed in his ICM talk \cite{Popov0} in 1986 (see also Appendix (Sect. 1, p. 202) in \cite{popovbook}). Independently, Pommerening \cite{pomm1} studied invariants of regular unipotent subgroups $U$ of $\GL(n)$ (he calls them canonical unipotent subgroups) and proved that in special cases invariant minors of $\GL(n)$ generate $k[\GL(n,k)]^U$.
\begin{conjecture}[(Popov,  Pommerening)]\label{conjecture1} Let $G$ be a reductive group over $k$, and let $H \subset G$ be an observable subgroup normalized by a maximal torus of $G$. Then for any affine $G$-variety $X$ the algebra of invariants $k[X]^H$ is finitely generated.
\end{conjecture}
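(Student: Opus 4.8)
The plan is to reduce the conjecture to geometric invariant theory for \emph{graded} unipotent group actions and then to run the machinery of quotients by such groups; here $k=\CC$. First I would exploit the hypothesis on $H$. Since $H$ is normalised by the maximal torus $T$ of $G$, its unipotent radical $H_u=R_u(H^0)$ is a connected unipotent subgroup normalised by $T$, so $\mathrm{Lie}(H_u)$ is a sum of root spaces and $H_u=\prod_{\a\in\Psi}U_\a$ for a closed subset $\Psi$ of the root system with $\Psi\cap(-\Psi)=\emptyset$. Applying the Borel fixed point theorem to the $T$-action on the projective variety of Borel subgroups of $G$ containing $H_u$ produces a Borel $B'\supseteq T$ with $H_u\subseteq R_u(B')$, so every root in $\Psi$ is positive for $B'$. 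Since $H_u$ is normal in $H$ with $Q:=H/H_u$ linearly reductive and acting rationally on $\CC[X]^{H_u}$, we have $\CC[X]^H=(\CC[X]^{H_u})^Q$, so by the classical Hilbert--Nagata finite generation theorem it is enough to show that $\CC[X]^{H_u}$ is finitely generated. Finally pick a regular dominant cocharacter $\l\colon\GG_m\to T$ for $B'$, so that $\langle\a,\l\rangle>0$ for every $\a\in\Psi$; then $\hat U:=\l(\GG_m)\ltimes H_u$ is a graded unipotent group acting on $X$, with the grading $\GG_m$ acting on $\mathrm{Lie}(H_u)$ through strictly positive weights.

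The core of the proof is then the GIT machinery for graded unipotent group actions, applied to the $\hat U$-action on $X$. I would pass to a $\hat U$-equivariant projective completion $\bar X\subseteq\PP(V)$, for a finite-dimensional $\hat U$-module $V$, equipped with an ample $\hat U$-linearisation $\mathcal L$ for which $X$ is the complement in $\bar X$ of a $\GG_m$-semi-invariant hypersurface section; twisting $\mathcal L$ by a rational character of the grading $\GG_m$ makes the linearisation \emph{well adapted}, so that the minimal $\GG_m$-weight $\o_{\min}$ on $V$ becomes negative while all the others stay positive. Writing $\bar X_{\min}$ for the union of the components of the $\GG_m$-fixed locus on which the action has weight $\o_{\min}$, and $\bar X^{0}_{\min}$ for its attracting set, one then carries out the sequence of $\hat U$-equivariant blow-ups demanded by the $\hat U$-theorem, so that on the resulting variety $\tilde X$ the group $H_u$ acts on $\tilde X^{0}_{\min}\setminus\bar X_{\min}$ with trivial stabilisers; the $\hat U$-theorem then gives that $\bigoplus_{m\ge0}H^0(\tilde X,\tilde{\mathcal L}^{\otimes m})^{\hat U}$, for the induced linearisation $\tilde{\mathcal L}$, is a finitely generated ring whose $\Proj$ is a projective geometric quotient of the stable locus $\hat U(\tilde X^{0}_{\min}\setminus\bar X_{\min})$. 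Restricting to the affine chart cut out by the minimal weight, one reads off $\CC[X]^{H_u}$ as a finite extension of the coordinate ring of the affine open subvariety of this quotient lying over $\bar X_{\min}$, reassembled across the residual $\GG_m$-grading; since the quotient is the $\Proj$ of a finitely generated ring, $\CC[X]^{H_u}$ --- and hence $\CC[X]^H=(\CC[X]^{H_u})^Q$ --- is finitely generated. The geometric description of a generating set of $\CC[X]^H$ also drops out of this construction: generators of $\CC[X]^{H_u}$ are produced from functions on the minimal weight stratum $\bar X_{\min}$ by pulling them back along the retraction $\tilde X^{0}_{\min}\to\bar X_{\min}$ and integrating over $H_u$ to form $\hat U$-invariant sections of powers of $\tilde{\mathcal L}$ on $\tilde X$, after which $\CC[X]^H$ is obtained by applying the Reynolds operator of $Q$.

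The step I expect to be the main obstacle is this interface between the non-reductive GIT performed on the projective completion and the invariant ring of the original affine $X$. One must carry out the $\hat U$-equivariant blow-ups needed to force stability to coincide with semistability along the minimal weight stratum \emph{without} disturbing $\CC[X]^{H_u}$, arranging the centres of the blow-ups to lie outside the locus carrying the invariants one needs; construct a well-adapted linearisation for an \emph{arbitrary} affine $X$, which may be singular and may carry $H_u$-orbits of positive-dimensional stabiliser; and keep careful track of the residual $\GG_m$-grading through the blow-ups so as to recover $\CC[X]^{H_u}$ exactly, and not merely a Rees-type modification or a saturation of it. The strict positivity of the grading cocharacter $\l$ secured in the first step is precisely what lets the minimal weight stratum of $\bar X$ play the role of the unstable stratum in ordinary reductive GIT, and hence lets the argument treat a general $H$ normalised by a maximal torus --- exactly the generality in which the classical methods, which handle only $H=R_u(P)$ for a parabolic subgroup $P$, say nothing.
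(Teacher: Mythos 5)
Your reduction steps are fine (passing to the unipotent radical $H_u$, noting it is a product of root groups for a closed set of roots positive with respect to some Borel containing $T$, and splitting off the reductive part $Q$ via $\CC[X]^H=(\CC[X]^{H_u})^Q$), and the existence of a grading cocharacter $\l$ with strictly positive weights on $\mathrm{Lie}(H_u)$ is also correct. The genuine gap is in the core step, exactly where you flag it. The theorem for graded unipotent groups gives finite generation of the $\hat U$-invariant section ring either under the hypothesis that (after a well-adapted twist) semistability coincides with stability for the $H_u$-action on the attracting open stratum, or else only \emph{after} the equivariant blow-ups --- and in the latter case the conclusion concerns $\bigoplus_m H^0(\tilde X,\tilde{\mathcal{L}}^{\otimes m})^{\hat U}$ for the blown-up variety and modified linearisation, which is a different ring. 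There is no argument that this ring surjects onto, contains as a finite module, or otherwise controls $\CC[X]^{H_u}$: finite generation does not pass to subrings, and "arranging the centres of the blow-ups to lie outside the locus carrying the invariants" is not something you can do for an arbitrary affine $X$, where the stabiliser jumps may be dense in every relevant stratum. In addition, even in the good case the theorem produces $\hat U$-invariants, i.e.\ invariants of $\GG_m\ltimes H_u$; recovering all of $\CC[X]^{H_u}$ requires an extra device (twisting by all characters of the grading $\GG_m$, or enlarging $X$ by an auxiliary line with a suitable $\GG_m$-action), and "reassembling across the residual $\GG_m$-grading" as stated does not supply it. So the proposal does not close.

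The paper takes a different and, at this point, decisive route: by the transfer principle $k[X]^{U}=(k[G]^{U}\otimes k[X])^{G}$ it suffices to treat the single case $X=G$ with $U=U_S$ acting by right translation, i.e.\ to show $U_S$ is a Grosshans subgroup of $G$. This is done by the Grosshans criterion: one writes down an explicit finite-dimensional $G$-module $\calw_{S,\a}$ (sums of wedge powers of $\CC^n$ weighted by high tensor powers of the flag vector $\te_n$, with weights $\a$ growing fast) and a point $p_{S,\a}$ whose stabiliser is exactly $U_S$, and then proves by a direct limit analysis of Borel orbits, using configuration varieties, that the boundary of $\overline{G\cdot p_{S,\a}}$ has codimension at least two. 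That codimension estimate replaces everything your proposal tries to extract from well-adapted linearisations, blow-ups and the minimal weight stratum, and it is precisely the reduction to $X=G$ that makes such a one-time geometric verification possible; the generators of $\calo(G)^{U_S}$ (and hence of $\CC[X]^{H}$ via the transfer principle and the Reynolds operator of $Q$, as you intend) then come out explicitly as the linear coordinates on $\calw_{S,\a}$, i.e.\ the principal invariant minors.
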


Note that $H=U \rtimes R$ can be written as the semidirect product of its unipotent radical $U$ and a reductive group $R$. Since $U\le H$ is a characteristic subgroup, if the maximal torus normalizes $H$ then it normalizes $U$ as well. Moreover, 
\[k[X]^H=[k[X]^U]^R\]
holds for any $H$-variety $X$, so we can restrict our study to the unipotent part $U$ and the corresponding invariant ring $k[X]^U$. Furthermore, if $G$ is any linear algebraic group and $H \subset G$ is a closed subgroup and $X$ is an affine $G$-variety, then 
\[k[X]^H=(k[G]^{H}\otimes k[X])^{G}\]
holds for the invariant rings. This is called the transfer principle, which goes back to the nineteenth century. In its modern form, it appeared in Grosshans \cite{Grosshans3} and Popov \cite{Popov3}. In particular, if $G$ is reductive, then finite generation of $k[G]^U$ implies finite generation of $k[X]^{U}$ and $k[X]^{H}$. 

The question can be further reduced to connected, simply connected, simple reductive groups, see \cite{Tan3, Grosshans}. Unipotent subgroups of these normalized by a maximal torus $T$ can be parametrized by '(quasi)closed' subsets $S \subset R^+$ of the set $R^+$ of positive roots of $G$ relative to some Borel subgroup containing $T$. If $\mathrm{char}(k)=0$ then quasi-closed and closed subsets are the same: a subset $S\subset R^+$ is closed if the subgroup $\langle U_\a:\a \in S\rangle$ generated by the root subgroups in $S$ does not contain any $U_\b$ with $\b \in R \setminus S$. Then the unipotent group 
\[U_S=\langle U_{\a}:\a \in S\rangle \]
is normalized by the maximal torus $T$, and all unipotent subgroups of $G$ with this property have this form for some closed subset $S$ (cf. \cite{Tan2,Tan3}).

When $G=\SL_n(k)$ we can assume that $U_S$ is unipotent upper triangular subgroup normalized by the diagonal maximal torus. Let $B_n\subset \SL_n(k)$ be the upper Borel then the positive roots are $R^+=\{\a_i-\a_j:i<j\}$, where $\a_i:T \to k^*$ is the character of the maximal diagonal torus $T\subset \SL_n(k)$ sending a torus element to its $(i,i)$ entry. Then $S\subset R^+$ is closed if and only if it is the incidence matrix of a strict ordering of the set $\{1,\ldots, n\}$, that is, 
\begin{equation}\label{partialorder1}
(i,j),(j,k) \in S \Rightarrow (i,k)\in S.
\end{equation}
The corresponding unipotent subgroup $U_S$ is upper triangular with $1$'s on the diagonal and zeros at the entries $(i,j)$ where $\a_i-\a_j \notin S$. For example, for $S=\{\a_1-\a_3,\a_2-\a_4\}$
\begin{equation}\label{regularsubgroup}
U_{S}=\begin{array}{|cccc|}
\hline 
1 & 0 & \bullet & 0 \\
  & 1 & 0 & \bullet \\
  &   & 1 & 0 \\
  &   &   & 1 \\
\hline
\end{array}=\left\{\left(\begin{array}{cccc}
1 & 0 & a & 0\\
  & 1 & 0 & b\\
  &   & 1 & 0\\
  &   &   & 1
 \end{array}\right):a,b\in k \right\}\subset \SL_4(k).
\end{equation} 
%Let $S_j=\{j\}\cup \{i: (i,j)\in S\}$ denote the set of possible nonzero entries in the $j$th column. Note that a minor $\det_{i_1,\ldots, i_m}^{j_1,\ldots, j_m}$ in the intersection of columns $i_1,\ldots, i_m$ and rows $j_1,\ldots, j_m$ of $\SL_n(k)$ is invariant under the right multiplication action of $U_S$ on $\SL_n(\CC)$ if and only if $S_j\subset \{i_1,\ldots, i_m\}$ whenever $j\in \{i_1,\ldots, i_m\}$.  We call these {\em invariant minors}.
%Pommerening in \cite{pomm3} formulated the following conjecture.
%\begin{conjecture}[(Pommerening 1987)]\label{conjecture2} Let $U_S$ be a unipotent subgroup of $\SL_n(k)$ normalized by a maximal torus corresponding to the closed subset $S\subset R^+$ of positive roots. The invariant ring $k[\SL_n(k)]^U$ is generated by the (finitely many) invariant minors of $\SL_n(k)$. 
%\end{conjecture}
We will often refer to elements of $S$ with pairs $(i,j)$ instead of $\a_i-\a_j$. We introduce the following special type of regular subgroups.  
\begin{enumerate}
\item A regular subgroup $U_S \subset B_n$ is called \textbf{left (resp right) Borel-regular} if its Lie algebra $\mathfrak{u}_S \subset \slc_n$ is closed under multiplication on the left (resp. right) by $B_n$. 
In a left Borel-regular subgroup the nondiagonal elements of $S$ form a vertical 'barcode', that is $(i,j)\in S \Rightarrow (i-1,j)\in S$ holds and $S$ can be parametrized by the sequence of positions of the lowest free parameter in each column:
\begin{equation}\nonumber
 U^{0,0,2,1,4,2}=\begin{array}{|cccccc|}
\hline 
1 & 0 & \bullet & \bullet & \bullet & \bullet \\
  & 1 & \bullet & 0 & \bullet & \bullet \\
  &   & 1 & 0 & \bullet  & 0 \\
  &   &   & 1 & \bullet & 0 \\
  & & & & 1 & 0 \\
  & & & & & 1\\
\hline
\end{array}
\end{equation}

\item We call a subgroup $U_S \subset B_n$ \textbf{Borel-regular} if it is left and right Borel-regular at the same time. Equivalently, it is normalized by $B_n$. This means that $(i,j)\in S \Rightarrow (i,j+1),(i-1,j)\in S$, hence Borel-regular subgroups are those left Borel-regular subgroups which correspond to some increasing sequence, e.g. 
\begin{equation}\nonumber U^{0,0,1,1,3,4}=\begin{array}{|cccccc|}
\hline 
1 & 0 & \bullet & \bullet & \bullet & \bullet \\
  & 1 & 0 & 0 & \bullet & \bullet \\
  &   & 1 & 0 & \bullet & \bullet \\
  &   &   & 1 & 0 & \bullet \\
  & & & & 1 & 0 \\
  & & & & & 1\\
\hline
\end{array}
\end{equation}
Unipotent radicals of parabolic subgroups are special Borel-regular subgroups where all blocks "touch" the main diagonal.    

Borel-regular subgroups can be defined in any linear algebraic group $G$: these are the regular subgroups normalized by some Borel subgroup of $G$. These are subgroups of the form $U_S \subset G$ where $S\subset R^+$ is closed under shifting by elements of $R^+$, i.e. $S+r \subseteq S$ for any $r\in R^+$. 

\item In particular, the symplectic group $\Sp_{n}(k)$ and orthogonal group $\SO_n(k)$ have Borel-compatible embeddings into $\SL_n(k)$, that is, a choice of Borel subgroups $B_{\Sp_n}$ and $B_{\SO_n}$ whose image sit in the upper Borel $B_n$ of $\SL_n(k)$. The image of the maximal torus in $\Sp_n(k)$ and $ \SO_n(k)$ consists of diagonal matrices $\mathrm{diag}(t_1,\ldots, t_n)$ satisfying $t_i=t_{n+1-i}^{-1}$, see \S\ref{subsec:symplectic}, \S\ref{subsec:orthogonal} for the details. The positive roots are 
\[R^+=\begin{cases} \{\a_i-\a_j\}_{1\le i<j \le l}\cup \{\a_i+\a_j\}_{1\le i\le j\le l} & \text{ for } \Sp_{2l}\\
 \{\a_i-\a_j\}_{1\le i<j \le l}\cup \{\a_i+\a_j\}_{1\le i< j\le l} & \text{ for } \SO_{2l} \\
 \{\a_i-\a_j\}_{1\le i<j\le l}\cup \{\a_i+\a_j\}_{1\le i< j\le l} \cup \{\a_i\}_{1\le i \le l} & \text{ for } \SO_{2l+1}.
\end{cases}\]

We call the Borel-regular subgroup $U_S \subset \Sp_n(k)$ \textbf{fat Borel-regular} if $\{\a_i+\a_j:1\le i\le j\le l\} \subseteq S$. Similarly, $U_S \subset \SO_n(k)$ is fat if $\{\a_i+\a_j:1\le i< j\le l\} \subseteq S$ when $n=2l$ and $\{\a_i+\a_j,\a_i:1\le i< j\le l\} \subseteq S$ when $n=2l+1$.  
We will see that if $S^{\SL} \subset \{1,\ldots, n\}^2$ collects the possible nonzero entries of a fat Borel-regular subgroup $U_S \subset \Sp_n(k),\SO_n(k) \subset \SL_n(k)$ then $U_S$ is Borel regular in $\SL_n(k)$ such that $S$ is a 'fat' domain in the sense that it contains the top right quarter of $\SL_n$: 
\begin{equation}\nonumber U^{0,0,1,3,3,4}=\begin{array}{|cccccc|}
\hline 
1 & 0 & \bullet & \bullet & \bullet & \bullet \\
  & 1 & 0 & \bullet & \bullet & \bullet \\
  &   & 1 & \bullet & \bullet & \bullet \\
  &   &   & 1 & 0 & \bullet \\
  & & & & 1 & 0 \\
  & & & & & 1\\
\hline
\end{array}
\end{equation}

\end{enumerate}

Machinery for proving finite generation for algebra of invariants is quite limited. However, there exists a standard criterion, called the Grosshans criterion \cite{Grosshans3,Grosshans} for proving the finite generation of $k[G]^H$, where $H\subset G$ is observable in the sense that 
$$H = \{ g \in G : f(xg)=f(x) \mbox{ for all $x \in G$ and } f \in k[G]^H \}.$$
Note that the action of $H$ on $G$ is by right translation. In this case the finite generation of $k[G]^H$ is equivalent to the existence of a finite-dimensional affine (left) $G$-module $\calw$ and some $w \in \calw$ such that $H=G_w$ is the stabiliser of $w$ and $\dim(\overline{G\cdot w}\setminus G\cdot w)\le \dim(G\cdot w)-2$.  Such subgroups $H$ are called Grosshans subgroups of $G$ and we call the pair $(\calw,w)$ a Grosshans pair for $H$. 

The main results of this paper are the following two theorems 

\begin{theorem}\label{main}
If $U_S\subset \SL_n(k)$ is a left (resp. right) Borel-regular subgroup, then $U_S$ is a Grosshans subgroup of $\SL_n(k)$. Therefore every linear action of $U_S$ on an affine or projective variety which extends to a linear action of $G$ has a finitely generated algebra of invariants. In particular this gives an affirmative answer to the Popov-Pommerening conjecture for left (resp. right) Borel-regular subgroups of $\SL_n(k)$.
\end{theorem}

\begin{theorem}\label{main2}
Let $G$ be a linear reductive group over $k$ of type $B$ or $D$ and $U_S\subset G$ a fat Borel-regular subgroup. Then $U_S$ is a Grosshans subgroup of $G$. In particular this gives an affirmative answer to the Popov-Pommerening conjecture for fat Borel-regular subgroups of symplectic and orthogonal Lie groups.
\end{theorem}

The Popov-Pommerening conjecture was known before in a few special cases. In a series of papers Tan \cite{Tan,Tan2,Tan3} proved it for all simple groups of Dynkin type $A_n$ with $n\le 4$, and for groups of type $B_2$ and $G_2$. Grosshans in \cite{Grosshans} confirmed the conjecture for unipotent radicals of parabolic subgroups and in \cite{Grosshans4} for those $S\subset R^+$ where $R^+ \setminus S$ is a linearly independent set over $\QQ$. Pommerening \cite{pomm2} proved the conjecture for a large class of subgroups of $\mathrm{GL}_n(k)$ by giving a generating set of the invariant ring $k[\mathrm{GL}_n(k)]^{U_S}$, but these cases only cover very special block regular subgroups. For more details on the history of the problem see \cite{Grosshans} and the survey papers \cite{Tan3,pomm3,Grosshans0}. After finishing the first version of this paper, V. Popov kindly drew my attention to the unpublished PhD Thesis of A'Campo-Neuen \cite{campo-neuen} where the Popov-Pomerening conjecture is proved for Borel-regular subgroups of $\SL_n(k)$, this is a special case of our Theorem \ref{main2}.

The layout of this paper is the following. We will work with $k=\CC$, but all arguments work for any algebraically closed field $k$ of characteristic zero which is a subfield of $\CC$. We start with a short introduction of Grosshans subgroups in \S \ref{section:grosshans}. In \S\ref{sec:construction} we construct for any regular subgroup $U_S \subset \SL_n(k)$ corresponding to the closed subset $S \subseteq R^+$ a subset family $\tS \subset 2^{\{1,\ldots n\}}$ and an affine $\SL_n(k)$-module $\calw_{\tS}$ with a point $p_{\tS}\in \calw_{\tS}$ whose stabiliser is isomorphic to $U_S$. 

In \S\ref{blocksln} we prove that the constructed pair $(\calw_{\tS},p_{\tS})$ is a Grosshans pair if $U_S$ is left Borel-regular. An outline of the proof is as follows. When the field of definition is $\CC$, the Zariski-closure of an orbit is the Euclidean closure, see \cite{borel}. Therefore every boundary point in $\overline{\SL_n \cdot p_{\tS}}\setminus \SL_n \cdot p_{\tS}$ can be written as a limit $p^\infty=\lim_{m\to \infty} g^{(m)}\cdot p_{\tS}$ for some sequence $(g^{(m)}) \subset \SL_n$. When $k=\CC$, however, $\overline{\SL_n \cdot p_{\tS}}=\SL_n \cdot \overline{B_n \cdot p_{\tS}}$ holds because $\SL_n \cdot p_{\tS}=\SL_n \times_{B_n} (B_n\cdot  p_{\tS})$ fibres over $\SL_n/B_n=\mathrm{Flag}_n$, the complete flag variety, which is closed. Therefore we can study the boundary of the Borel orbit instead. We construct a cover $\overline{B_n \cdot p_{\tS}}\setminus B_n \cdot p_{\tS} =\cup_{\bu,r} \calb_\bu^r$ 
with Borel-invariant boundary subsets $\calb_\bu^r$ indexed by an array $\bu \subset \{1,\ldots, n\}$ and an integer $1\le r \le n$. We prove that for all $\bu,r$ either (1) $\dim(\calb_\bu^r)\le \dim(B_n \cdot p_{\tS})-2$ or (2) every point of $\calb_\bu^r$ is fixed by a $1$-dimensional subgroup of the opposite Borel $B^{op}$. In both cases we can easily deduce that $\dim(\SL_n \cdot \calb_\bu^r)\le \dim(\SL_n \cdot p_{\tS})-2$.

In \S\ref{sec:classical} we study Borel regular subgroup of classical groups. In particular, in \S\ref{subsec:symplectic} we define a Borel-compatible embedding of $\Sp_n(k)$ into $\SL_n(k)$ and using this embedding we parametrize Borel regular subgroups of $\Sp_n(k)$ with root subsets $S\subset \{1,\ldots, n\}^2$ again. We define the symplectic fundamental domain $F\subset \{1,\ldots, n\}^2$ corresponding to $S$ and define the pair $(\calw_{\tF},p_{\tF})$ where the stabiliser of $p_{\tF}$ in $\Sp_n(k)$ is $U_S$. Finally, we prove Theorem \ref{main2} for symplectic groups. \S \ref{subsec:orthogonal} follows the same line for the orthogonal groups $\SO_n$ for odd and even $n$ and proves Theorem \ref{main2} for orthogonal groups.

We conjecture that $(\calw_{\tS},p_{\tS})$ is a Grosshans pair for arbitrary regular subgroup $U_S \subset \SL_n(k)$, not just for left Borel-regular subgroups. Unfortunately we can not prove this in full generality. What we conjecture is that all boundary components of $\SL_n(k) \cdot p_{\tS}$ have codimension at least two in its closure in $\calw_{\tS}$. In \S\ref{strategy} we prove this for a special class of boundary components. We define the toric closure of the orbit $\SL_n(k) \cdot p_{\tS}$ as $\SL_n(k) \cdot (\overline{T \cdot p_{\tS}})$ and prove that the toric boundary components are small. 
\begin{theorem}[(Partial answer to the PP conjecture for general $S$)]\label{mainc}
Let $U_S\subset \SL_n(k)$ be a regular subgroup corresponding to the closed subset $S \subset R^+$.
Then the toric boundary components of the orbit $\SL_n(k) \cdot p_{\tS}$ have codimension at least $2$ in the orbit closure, that is 
\[\dim(\SL_n(k)\cdot (\overline{T\cdot p_{\tS}})\setminus \SL_n(k)\cdot p_{\tS}) \le \dim(\SL_n(k) \cdot p_{\tS})-2.\]
\end{theorem}

We finish the paper with some remarks in \S7 on the relation of our approach to configuration varieties and Bott-Samelson varieties.

\noindent\textbf{Acknowledgments}
I would like to thank Frances Kirwan for many helpful discussions and for listening the details of this work. I thank to Vladimir Popov for pointing out the existence of \cite{campo-neuen} and to Frank Grosshans and Klaus Pommerening for the useful comments to the first version of this paper.

\section{Grosshans subgroups}\label{section:grosshans}

Let $G$ be reductive algebraic group over an algebraically closed field $k$. 
\begin{definition}
A subgroup $H\subset G$ is called Grosshans subgroup if $k[X]^H$ finitely generated for any affine variety $X$ endowed with a linear action of $G$. 
\end{definition}

\begin{theorem}[(Grosshans Criterion \cite{Grosshans})]\label{thmgrosshans} Let $G$ be a reductive group over an algebraically closed field, and $H$ an observable subgroup, that is, $H=\{g \in G|f(xg)=f(x)\}$ for all $x\in G$ and $f\in k[X]^H$. Then the following conditions are equivalent:
\begin{enumerate}
\item $H$ is a Grosshans subgroup of $G$.
\item $k[G]^H$ is a finitely generated $k$-algebra, where $H$ acts via right translations.
\item There is a finite-dimensional left $G$-module $\calw$ and some $w \in \calw$ such that $H=G_w$ is the stabiliser of $w$ (and therefore $G/H$ is a homogeneous space $G \cdot w$) and $\dim(\overline{G\cdot w}\setminus G\cdot w)\le \dim(G\cdot w)-2$. 
\end{enumerate}
\end{theorem}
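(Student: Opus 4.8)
The plan is to establish the three conditions as equivalent by proving $(1)\Rightarrow(2)$, then $(2)\Leftrightarrow(3)$, and finally $(2)\Rightarrow(1)$, the two substantive ingredients being the transfer principle $k[X]^{H}\cong(k[G]^{H}\otimes k[X])^{G}$ and the structure theory of quasi-affine homogeneous spaces. The implication $(1)\Rightarrow(2)$ is immediate: apply the hypothesis to $X=G$ with its left-translation $G$-action, which is an affine $G$-variety, and observe that the right-translation action of $H$ on $k[G]$ commutes with left translations, so that $k[G]^{H}$ is exactly the invariant ring appearing in $(1)$ for this $X$.

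For $(2)\Rightarrow(3)$ I would first use that observability of $H$ is equivalent to $G/H$ being quasi-affine, so that $G/H$ carries a well-defined ring of regular functions with $k[G/H]=k[G]^{H}=:A$. By $(2)$ the algebra $A$ is finitely generated, and since $G$ is reductive it acts locally finitely on $A$ by left translations; hence one can choose a finite-dimensional $G$-submodule $M\subseteq A$ generating $A$ as a $k$-algebra, and dualizing produces a $G$-equivariant closed embedding $\iota\colon\Spec A\hookrightarrow V:=M^{*}$. The canonical morphism $G/H\to\Spec A$ is an open immersion onto a dense subset — this is the point at which I invoke the theory of quasi-affine varieties with finitely generated function ring — so $\iota$ realizes $G/H$ as a locally closed $G$-stable subvariety of $V$; set $v\in V$ to be the image of the base point $eH$. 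Equivariance and injectivity of this embedding force $G_{v}=H$, and $\overline{G\cdot v}=\iota(\Spec A)$ because the latter is closed in $V$ and contains the dense orbit $G\cdot v\cong G/H$. Finally the same theory gives that $\Spec A\setminus(G/H)$ has codimension at least two, which is the last requirement of $(3)$.

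For $(3)\Rightarrow(2)$, given $V$ and $v$ as in $(3)$, the orbit $G\cdot v\cong G/H$ is quasi-affine with $k[G\cdot v]=k[G]^{H}$, so it suffices to show this ring is finitely generated. Passing to the normalization $N$ of the affine variety $\overline{G\cdot v}$, the $G$-action lifts, $N$ is affine with finitely generated coordinate ring, the preimage of the smooth orbit $G\cdot v$ is carried isomorphically onto it, and finiteness of the normalization keeps the boundary $N\setminus(G\cdot v)$ of codimension at least two; since $N$ is normal, every regular function on $G\cdot v$ extends over this boundary, so $k[N]=k[G\cdot v]=k[G]^{H}$ is finitely generated. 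For $(2)\Rightarrow(1)$ I would invoke the transfer principle: for any affine $G$-variety $X$ one has $k[X]^{H}\cong(k[G]^{H}\otimes k[X])^{G}$ with $G$ acting diagonally (through left translations on the first factor); by $(2)$ the tensor product is a finitely generated $G$-algebra, and since $G$ is reductive its ring of invariants is finitely generated by the Hilbert--Nagata--Mumford--Haboush theorem, whence $k[X]^{H}$ is finitely generated and $H$ is a Grosshans subgroup.

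The main obstacle is the geometric input used in both directions of $(2)\Leftrightarrow(3)$: the fact that a quasi-affine variety with finitely generated ring of regular functions sits as an open subset of the spectrum of that ring with complement of codimension at least two, together with the bookkeeping needed to carry this through the normalization in $(3)\Rightarrow(2)$ and to identify the stabiliser and the orbit closure precisely in $(2)\Rightarrow(3)$. This is exactly where the observability hypothesis is essential, since it is what guarantees that $G/H$ is quasi-affine, so that these statements are available.
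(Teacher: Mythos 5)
The paper does not prove this theorem: it is quoted verbatim from Grosshans' work (\cite{Grosshans}) as a known criterion, so there is no internal proof to compare against. Your outline is the standard argument (essentially Theorem 4.3 of Grosshans' lecture notes), and the cycle $(1)\Rightarrow(2)$, $(2)\Leftrightarrow(3)$, $(2)\Rightarrow(1)$ with the transfer principle and the quasi-affine structure theory is exactly how it is done; the transfer principle itself is the one the paper quotes in its introduction.

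The one place where you lean on a black box that deserves a sentence more is the codimension-two claim in $(2)\Rightarrow(3)$. Finite generation of $A=k[G/H]$ alone does not formally force $\Spec A\setminus (G/H)$ to have codimension $\ge 2$; the missing observation is that $\Spec A$ is automatically \emph{normal}. Indeed, the integral closure of $A$ in its fraction field consists of functions regular on the smooth (hence normal) open subset $G/H$, so it equals $A$. Once $\Spec A$ is normal, a codimension-one component $E$ of the boundary would produce (by the usual valuation-theoretic argument: take a uniformizer at the generic point of $E$ and clear its other poles by a suitable power of a function vanishing on them but not on $E$) a regular function on $G/H$ that does not extend to $\Spec A$, contradicting $\mathcal{O}(\Spec A)=\mathcal{O}(G/H)$. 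With that supplement, and with the normalization step in $(3)\Rightarrow(2)$ which you do carry out correctly (smoothness of the orbit guarantees the normalization is an isomorphism over it, finiteness preserves the codimension bound, and algebraic Hartogs extends functions across the boundary), the proposal is a complete and correct proof of the cited criterion.
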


\begin{definition} A pair $(\calw,w)$ where $\calw$ is a finite dimensional $G$-module and $w\in \calw$ is a point satisfying the Grosshans criterion is called a Grosshans pair.  
\end{definition}

\section{Construction of Grosshans pairs for $G=\SL_n(k)$}\label{sec:construction}

For the rest of the paper we restrict our attention to the $k=\CC$ case, but all arguments work for any algebraically closed field $k$ of characteristic zero which is a subfield of $\CC$. We will often use the shorthand notation $\SL_n$ for $\SL_n(\CC)$.

In this section we assume that $G=\SL_n(\CC)
$ and let $T\subset \SL_n(\CC)$ be the diagonal maximal torus and $\liet \subset \slc_n(\CC)$ the Cartan subalgebra of diagonal matrices. Let $\a_i \in \liet^*$ be the dual of $E_{ii} \in \liet$ where $E_{ii}$ is the matrix of the endomorphism which fixes the $i$th basis vector and kills all other basis vectors.
Let $R^+=\{\a_i-\a_j:i<j\}$ be the set of positive roots and $B_n \subset \SL_n(\CC)$ be the corresponding upper Borel subgroup. The one dimensional root subgroup $U_{\a_i-\a_j}$ consists of unipotent matrices with the only nonzero off-diagonal entry sitting at $(i,j)$. 

A subset $S\subset R^+$ of the root system is closed if and only if the following transitivity conditions hold for all $1\le i<j<k\le n$:
\begin{equation}\label{transitivity}
\a_i-\a_j,\a_j-\a_k \in S \Rightarrow \a_i-\a_k\in S. 
\end{equation}
Define the unipotent subgroup
\begin{equation}\label{defus}
U_S=\langle U_{\a_i-\a_j}:\a_i-\a_j \in S\rangle \subset \SL_n(\CC)
\end{equation} 
generated by the root subgroups $U_{\a_i-\a_j}$. Then $U_S$ is unipotent with independent parameters at the entries indexed by $S$ and it is normalized by the maximal diagonal torus in $\SL_n(\CC)$, and all unipotent subgroups normalized by this torus have this form.   

For $1\le j\le n$ let $S_j=\{j\}\cup \{i:(i,j)\in S\}\subset \{1,\ldots, n\}$ collect the positions of the (possibly) nonzero entries in the $j$th column of $U_S$. In the example \eqref{regularsubgroup} of the Introduction 
\[S_1=\{1\},S_2=\{2\},S_3=\{1,3\},S_4=\{2,4\}.\]

Fix a basis $\{e_1,\ldots e_n\}$ of $\CC^n$ compatible with $B_n$, that is, $B_n$ preserves the subspace $\mathrm{Span}(e_1,\ldots, e_i)$ for $1\le i \le n$. For a subset $Z\subset \{1,\ldots, n\}$ we set
\[p_Z=\wedge_{z\in Z}e_z \in \wedge^{|Z|}\CC^n\]
where $|Z|$ is the cardinality of $Z$. Define the point
\[p_S=\bigoplus_{j=1}^n p_{S_j}=\bigoplus_{j=1}^n \wedge_{i\in S_j} e_i \in 
\calw_S\]
where 
\[\calw_S=\bigoplus_{j=1}^n \wedge^{|S_j|}\CC^n.\]
In the example \eqref{regularsubgroup} 
\[p_S=e_1 \oplus e_2 \oplus (e_1 \wedge e_3) \oplus (e_2 \wedge e_4) \in \CC^S=\CC^4 \oplus \CC^4 \oplus \wedge^2 \CC^4 \oplus \wedge^2 \CC^4.\]

\begin{theorem}\label{stabiliser}
The stabiliser of $p_S$ in $\SL_n$ is $U_S$. 
\end{theorem}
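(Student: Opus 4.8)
The plan is to show the two inclusions $U_S \subseteq \mathrm{Stab}_{\SL_n(\CC)}(p_S)$ and $\mathrm{Stab}_{\SL_n(\CC)}(p_S) \subseteq U_S$ separately, using that $\SL_n(\CC)$ acts diagonally on $\CC^S = \bigoplus_{j=1}^n \wedge^{|S_j|}\CC^n$ via the induced actions on the exterior powers, so that $g\cdot p_S = \bigoplus_{j=1}^n \left( g\cdot \bigwedge_{i\in S_j} e_i \right)$. Hence $g$ stabilises $p_S$ if and only if it stabilises each wedge $p_{S_j} = \bigwedge_{i\in S_j} e_i$ separately, and the stabiliser of a decomposable vector $\bigwedge_{i\in S_j}e_i$ in $GL_n(\CC)$ is precisely the set of $g$ preserving the subspace $E_{S_j} := \mathrm{Span}(e_i : i\in S_j)$ and acting on it with determinant $1$. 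So the first reduction is: $\mathrm{Stab}(p_S) = \{ g\in \SL_n(\CC) : g E_{S_j}=E_{S_j} \text{ and } \det(g|_{E_{S_j}})=1 \text{ for all } j\}$.

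For the inclusion $U_S \subseteq \mathrm{Stab}(p_S)$: it suffices to check that each root subgroup generator $U_{L_i-L_j}$ with $(i,j)\in S$ fixes $p_S$, and for this it suffices (by the reduction above) to check that the elementary unipotent $u = I + t E_{ij}$ preserves each subspace $E_{S_\ell}$. Now $u$ acts as the identity except $e_j \mapsto e_j + t e_i$, so $u E_{S_\ell} = E_{S_\ell}$ unless $j\in S_\ell$, in which case we need $i \in S_\ell$; but $j\in S_\ell$ means either $\ell=j$ (and then $i\in S_j$ because $(i,j)\in S$, by the definition $S_j=\{j\}\cup\{i:(i,j)\in S\}$) or $(j,\ell)\in S$ (and then $(i,\ell)\in S$ by the transitivity/closedness condition \eqref{transitivity}, so $i\in S_\ell$). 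Determinants on each $E_{S_\ell}$ are $1$ since $u$ is unipotent. Hence $U_S$ fixes $p_S$.

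For the reverse inclusion: suppose $g\in \SL_n(\CC)$ preserves every $E_{S_j}$. Taking $j$ with $|S_j|=1$, i.e. $S_j=\{j\}$ for every $j$ that is not the first index of any root in $S$ — more usefully, taking all singletons — one sees that $g$ preserves $\CC e_j$ for each $j$ (since $S_j\ni j$ always, but this alone is not enough; the right statement is that $g$ must preserve the flag). The cleaner route is: $g$ preserves $E_{S_j}$ for all $j$, and $j\in S_j$ for all $j$, so I claim $g$ is upper-triangular with respect to the ordering and has the prescribed zero pattern. Indeed, writing $g e_j = \sum_i g_{ij} e_i$, the condition $g E_{S_j}=E_{S_j}$ forces $g_{ij}=0$ whenever $i\notin S_j$; in particular $g$ is upper triangular (as $i\in S_j\Rightarrow i\le j$, with $i<j$ unless $i=j$) with unit diagonal (the diagonal entries are determined by the $\det=1$ conditions together with upper-triangularity, working up the size filtration of the $S_j$). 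This exhibits $g$ as an element of the group of upper-triangular unipotent matrices whose only possibly-nonzero off-diagonal entries lie in positions $(i,j)$ with $i\in S_j$, i.e. $(i,j)\in S$; and that group is exactly $U_S$ by its definition \eqref{defus}, since $U_S$ has independent parameters precisely at the entries indexed by $S$.

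The main obstacle is the reverse inclusion, specifically verifying carefully that the zero pattern "$g_{ij}=0$ for $i\notin S_j$" together with $g\in\SL_n$ and the individual determinant conditions $\det(g|_{E_{S_j}})=1$ forces a unit diagonal and forces $g$ to lie in $U_S$ rather than merely in some larger group with the same support; this requires an induction on $|S_j|$ (or on the poset structure of $S$) to propagate the determinant-one conditions down to the individual diagonal entries, and a check that the group of matrices with support pattern $S$ that are unipotent is closed under multiplication precisely because $S$ is closed in the sense of \eqref{transitivity} — which is where the hypothesis on $S$ is genuinely used on this side of the argument as well.
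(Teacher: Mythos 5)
Your proposal is correct and follows essentially the same route as the paper: you reduce to stabilising each decomposable summand $\wedge_{i\in S_j}e_i$, use closedness of $S$ for the inclusion $U_S\subseteq \mathrm{Stab}(p_S)$, and kill the diagonal part by the induction resting on $j\in S_j$ and $S_j\subseteq\{1,\ldots,j\}$, which is exactly how the paper shows $\bigcap_j T^{S_j}$ is trivial. The only difference is presentational: the paper describes each summand's stabiliser by root-subgroup and torus generators and intersects them, while you work directly with matrix entries and the conditions $g E_{S_j}=E_{S_j}$, $\det(g|_{E_{S_j}})=1$.
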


\begin{proof}
Let $T^{S_j} \subset \SL_n(\CC)$ denote the diagonal torus 
\[T^{S_j}=\{\mathrm{diag}(a_1,\ldots, a_n):\Pi_{i\in S_j}a_i=\Pi_{i\notin S_j}a_i=1\}.\]
The stabiliser of $p_S$ is the intersection of the stabilisers of its direct summands. The stabiliser in $\SL_n(\CC)$ of the direct summand $\wedge_{i \in S_j} e_i$ is the semidirect product
\begin{equation*}
U_S^j=\langle U_{\a_a-\a_b}:(a\neq b, a,b\in S_j) \text{ or }  (a\neq b, b\notin S_j)\rangle \rtimes T^{S_j}   
\end{equation*}
Now $j\in S_j$ for $1\le j \le n$ and by \eqref{transitivity} the intersection is 
\[\cap_{j=1}^n U_S^j=\langle U_{\a_a-\a_b}:b<a, b\in S_a\rangle \rtimes (\cap_{j=1}^n T^{S_j})\]
Since $j\in S_j$ and $S_j \subset \{1,\ldots, j\}$ for all $1\le j \le n$ we have by induction on $n$ that
\[\cap_{j=1}^n T^{S_j}=\{\mathrm{diag}(a_1,\ldots, a_n):a_1=1,\Pi_{i\in S_2}a_i=1,\ldots ,\Pi_{i\in S_n}a_i=1\}=1,\]
and therefore 
\[\cap_{j=1}^n U_S^j=\langle U_{\a_a-\a_b}:b<a, b\in S_a \rangle=U_S\]
by definition.
\end{proof}

\begin{corollary}
The map $\rho_S:\SL_n(\CC)\to \calw_S$ defined as $(v_1,\ldots, v_n) \mapsto \bigoplus_{j=1}^n \wedge_{i\in S_j} v_i$ on a matrix with column vectors $v_1,\ldots, v_n$ is invariant under the right multiplication action of $U_S$ on $\SL_n(\CC)$ and the induced map
\[\SL_n(\CC)/U_S \hookrightarrow \calw_S\]
on the set of $U_S$-orbits is injective and $\SL_n(\CC)$-equivariant with respect to the left multiplication action of $\SL_n(\CC)$ on $\SL_n(\CC)/U_S$.  
\end{corollary}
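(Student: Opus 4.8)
The plan is to observe that $\rho_S$ is simply the orbit map through the point $p_S$, after which every assertion of the Corollary becomes a formal consequence of Theorem \ref{stabiliser}. First I would record that if $g\in\SL_n(\CC)$ has columns $v_1,\dots,v_n$, then $v_i=g e_i$ for the standard basis $e_1,\dots,e_n$, so that, letting $\SL_n(\CC)$ act on $\CC^S=\bigoplus_{j=1}^n\wedge^{|S_j|}\CC^n$ diagonally through the exterior power representations,
\[
\rho_S(g)=\bigoplus_{j=1}^n\ \bigwedge_{i\in S_j}(g e_i)=\bigoplus_{j=1}^n g\cdot\Big(\bigwedge_{i\in S_j}e_i\Big)=g\cdot p_S .
\]
In particular $\rho_S(I)=p_S$, and $\rho_S$ is the composite of the orbit map $g\mapsto g\cdot p_S$ with the inclusion $\SL_n(\CC)\cdot p_S\hookrightarrow\CC^S$.

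Granting this, the three claims follow at once. For right $U_S$-invariance, Theorem \ref{stabiliser} gives $u\cdot p_S=p_S$ for $u\in U_S$, whence $\rho_S(gu)=(gu)\cdot p_S=g\cdot(u\cdot p_S)=g\cdot p_S=\rho_S(g)$, so $\rho_S$ factors through $\SL_n(\CC)/U_S$, inducing $\bar\rho_S\colon\SL_n(\CC)/U_S\to\CC^S$, $gU_S\mapsto g\cdot p_S$. For equivariance, $\rho_S(hg)=(hg)\cdot p_S=h\cdot(g\cdot p_S)=h\cdot\rho_S(g)$ for $h\in\SL_n(\CC)$, which is exactly $\SL_n(\CC)$-equivariance of $\bar\rho_S$ with respect to left translation on the source. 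For injectivity, if $g\cdot p_S=g'\cdot p_S$ then $g^{-1}g'$ fixes $p_S$, so $g^{-1}g'\in\SL_n(\CC)_{p_S}=U_S$ again by Theorem \ref{stabiliser}, i.e.\ $gU_S=g'U_S$; thus $\bar\rho_S$ is injective with image the orbit $\SL_n(\CC)\cdot p_S$.

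I do not expect any genuine obstacle: the entire content is carried by Theorem \ref{stabiliser}, and the Corollary is its formal shadow. The only points needing a little care are bookkeeping ones — matching the ``columns of $g$'' description of $\rho_S$ with the exterior-power action so that $\rho_S(g)=g\cdot p_S$ holds on the nose, including the order of the factors in each wedge, and keeping track of which inclusion is used where ($U_S\subseteq\SL_n(\CC)_{p_S}$ for invariance, the reverse inclusion for injectivity). If one wished to prove invariance without invoking Theorem \ref{stabiliser}, I would argue directly: the $i$-th column of $gu$ is $\sum_k u_{ki}v_k$, and since $u_{ki}\neq 0$ forces $k\in S_i\subseteq S_j$ whenever $i\in S_j$ (by closedness \eqref{transitivity}), the vectors $\{\text{column }i\text{ of }gu : i\in S_j\}$ arise from $\{v_i:i\in S_j\}$ via the principal submatrix $(u_{ki})_{k,i\in S_j}$ of $u$, which is unipotent upper-triangular of determinant $1$; hence $\bigwedge_{i\in S_j}(\text{column }i\text{ of }gu)=\bigwedge_{i\in S_j}v_i$ for every $j$, and $\rho_S(gu)=\rho_S(g)$.
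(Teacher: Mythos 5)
Your proof is correct and follows exactly the route the paper intends: the paper states this Corollary with no separate proof precisely because, as you observe, $\rho_S$ is the orbit map $g\mapsto g\cdot p_S$, so right $U_S$-invariance, injectivity on $U_S$-cosets, and left $\SL_n(\CC)$-equivariance are all formal consequences of Theorem \ref{stabiliser} identifying the stabiliser of $p_S$ with $U_S$. Your optional direct verification of invariance via the unipotent principal submatrix $(u_{ki})_{k,i\in S_j}$ (using closedness \eqref{transitivity}) is also sound, though not needed.
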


\begin{ex} This example shows that $\rho_S(\SL_n(\CC))=\SL_n(\CC) \cdot p_{S}$ might have codimension 1 boundary components in $\calw_S$. Take 
\[p_S=e_1 \oplus (e_1 \wedge e_2) \oplus (e_1 \wedge e_3) \oplus (e_1 \wedge e_2 \wedge e_3 \wedge e_4) \in \CC^4 \oplus \wedge^2 \CC^4 \oplus \wedge^2 \CC^4 \oplus \wedge^4 \CC^4\]
corresponding to the group 
\[U_S=\begin{array}{|cccc|}
\hline 
1 & \bullet & \bullet & \bullet \\
  & 1 & 0 & \bullet \\
  &   & 1 & \bullet \\
  &   &   & 1 \\
\hline
\end{array}.\]
Then 
\[\lim_{t\to 0} \left(\begin{array}{cccc}
t & 0 & 0 & 0\\
0  & t^{-1} & 0 & 0\\
 0 & 0  & t^{-1} & 0\\
0  & 0  & 0  & t
 \end{array}\right)\cdot p_S=0 \oplus (e_1 \wedge e_2) \oplus (e_1 \wedge e_3) \oplus (e_1 \wedge e_2 \wedge e_3 \wedge e_4),\]
whose stabiliser in $\SL_n(\CC)$ is
\[\left\{\left(\begin{array}{cccc}
a & b & c & d\\
0  & a^{-1} & 0 & e\\
0  & 0  & a^{-1} & f\\
0  & 0  & 0  & a
 \end{array}\right):a\in \CC^*, b,c,d,e,f\in \CC\right\}\]
which has dimension one plus the dimension of $U_S$, and therefore this boundary orbit has codimension $1$ in $\overline{\SL_n(\CC)\cdot p_{S}} \subset \calw_S$.

To make things even worse, in this example we have infinitely many boundary orbits, which means that it is not enough to study the boundary orbits and their stabilisers to prove the Grosshans property. Indeed, 
\[\lim_{t\to 0} \left(\begin{array}{cccc}
1 & 0 & 0 & 0\\
0  & 1 & \a & 0\\
 0 & 0  & t & 0\\
0  & 0  & 0  & t^{-1}
 \end{array}\right)\cdot p_S=e_1 \oplus (e_1 \wedge e_2) \oplus \a(e_1 \wedge e_2) \oplus (e_1 \wedge e_2 \wedge e_3 \wedge e_4),\]
and for different $\a$'s these boundary points sit in different $\SL_n(k)$ orbits.  
\end{ex}

Let $\tS=\{S_{i_1}\cup \ldots \cup S_{i_r}:1\le i_1<\ldots <i_r\le n\}\subset 2^{\{1,\ldots ,n\}}$ be the family of all possible unions of the $S_j$'s. Recall from \eqref{partialorder1} that the stars in the box form of $U_S$ form an incidence matrix of a partial order of $\{1,2,\ldots, n\}$, that is, 
\begin{equation}\label{partialorder}
i\in S_j,\ j\in S_k \Rightarrow i\in S_k 
\end{equation}
holds for $1\le i<j<k\le n$ and therefore for $i<j$
\[S_i \cap S_j=\cup_{k\in S_i\cap S_j} S_k.\] 
Hence $\tS$ is a so-called ring family, that is, closed under intersections and finite unions:   $U,V \in \tS$ implies $U\cap V,U \cup V \in \tS$. The point
\[p_{\tS}=\bigoplus_{U\in \tS} \wedge_{i\in U} e_i \in 
\calw_{\tS}\]
where 
\[\calw_{\tS}=\bigoplus_{U \in \tS} \wedge^{|U|}\CC^n.\]
has the same stabilizer as $p_S$, that is Theorem 
\ref{stabiliser} implies
\begin{corollary}
The stabilizer of $p_{\tS}$ in $\SL_n$ is $U_S$.
\end{corollary}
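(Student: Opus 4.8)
The plan is to compute $\mathrm{Stab}_G(p_{S,\a})$ directly from the decomposition $p_{S,\a}=q_{S,\a}\oplus r_G$ into the two $G$-stable summands of the ambient module, so that $g\cdot p_{S,\a}=p_{S,\a}$ holds precisely when $g\cdot q_{S,\a}=q_{S,\a}$ and $g\cdot r_G=r_G$ simultaneously, and then to feed this into the preceding theorem identifying the maximal unipotent subgroup of $\mathrm{Stab}_G(p_S)$ with $U_S$.

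First I would record two auxiliary facts. (i) The stabiliser of $r_G=\bigoplus_{j=1}^n\wedge_{i=1}^j e_{\s(i)}$ in $\SL_n(\CC)$ is exactly the unipotent radical $U^{\SL}$ of $B^{\SL}$: such a $g$ must preserve every step of the flag \eqref{fullflag} and act with determinant $1$ on it, which forces all its diagonal entries to equal $1$; the same subgroup $U^{\SL}$ also fixes $\te_G=\bigotimes_{j=1}^n\wedge_{i=1}^j e_{\s(i)}$. In particular $\mathrm{Stab}_{\SL_n(\CC)}(r_G)$ is unipotent and contains no nontrivial diagonalisable element. (ii) Since $S\subseteq R^+$ we have $U_S=\langle G_\a:\a\in S\rangle\subseteq U^+=\exp(\lieg^+)\subseteq U^{\SL}$, so $U_S$ fixes $r_G$ and $\te_G$, and by the previous theorem $U_S$ fixes $p_S=\bigoplus_{j\in\calb}\wedge_{i\in S_j}e_i$, hence each of its direct summands $\wedge_{i\in S_j}e_i$. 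Combining (i) and (ii), $U_S$ fixes every summand $(\wedge_{i\in S_j}e_i)\otimes(\te_G)^{\otimes\a_j}$ of $q_{S,\a}$ as well as $r_G$, so $U_S\subseteq\mathrm{Stab}_G(p_{S,\a})$, giving the easy inclusion.

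For the reverse inclusion, take $g\in\mathrm{Stab}_G(p_{S,\a})$. From $g\cdot r_G=r_G$ and (i) we get $g\in G\cap U^{\SL}$; in particular $g$ is unipotent and $g\cdot\te_G=\te_G$. From $g\cdot q_{S,\a}=q_{S,\a}$, comparing the component in the $j$-th summand of $V_{S,\a}$ for each $j\in\calb$ and using $g\cdot\te_G=\te_G$, we obtain $(g\cdot\wedge_{i\in S_j}e_i)\otimes(\te_G)^{\otimes\a_j}=(\wedge_{i\in S_j}e_i)\otimes(\te_G)^{\otimes\a_j}$; cancelling the nonzero tensor factor $(\te_G)^{\otimes\a_j}$ gives $g\cdot\wedge_{i\in S_j}e_i=\wedge_{i\in S_j}e_i$ for all $j\in\calb$, hence $g$ fixes $p_S$. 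So $g$ is a unipotent element of $\mathrm{Stab}_G(p_S)$, and the proof of the previous theorem exhibits $\mathrm{Stab}_G(p_S)$ as an extension of a diagonalisable group (a subgroup of $\bigcap_{j\in\calb}T^{S_j}$) by its maximal unipotent subgroup $U_S$; a unipotent element of such a group must lie in $U_S$. Therefore $g\in U_S$, and $\mathrm{Stab}_G(p_{S,\a})=U_S$.

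The step I expect to be the main obstacle is the final one, namely promoting ``$g$ is unipotent and fixes $p_S$'' to ``$g\in U_S$''. The theorem only names the \emph{maximal} unipotent subgroup of $\mathrm{Stab}_G(p_S)$, and that stabiliser need not be solvable, since root subgroups $E_{ab}$ from columns $b\notin\calb$ appear in its description and some of these are not triangular with respect to \eqref{fullflag}. The clean way around this is to exploit the extra information already in hand, that $g\in G\cap U^{\SL}$ is triangular with respect to \eqref{fullflag}: this kills every non-triangular root direction, while the generating-subset property of $\calb$ forces the remaining, triangular root directions outside the $\calb$-columns to be determined by those inside, so that intersecting the explicit form of $\mathrm{Stab}_G(p_S)$ from the theorem's proof with $G\cap U^{\SL}$ leaves exactly $U_S$. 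This is the point at which all three ingredients of the construction --- $p_S$, the tensor weights $\te_G$, and the extra vector $r_G$ --- are needed simultaneously, and it is what should be checked carefully in the full write-up.
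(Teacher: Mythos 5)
Your reduction is the same as the paper's: split $p_{S,\a}=q_{S,\a}\oplus r_G$, observe that anything stabilising $r_G$ preserves the flag \eqref{fullflag} with all induced determinants equal to $1$ (hence is unipotent and also fixes $\te_G$), cancel the nonzero tensor factor $\te_G^{\otimes\a_j}$ in each summand of $q_{S,\a}$ to conclude that a stabilising element also fixes $p_S$, and then appeal to the preceding theorem. Up to that point the argument is correct and matches the paper.

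The gap is exactly the step you flag yourself: passing from ``$g$ is a unipotent \emph{element} of $\mathrm{Stab}_G(p_S)$'' to ``$g\in U_S$''. You are right that $\mathrm{Stab}_G(p_S)$ need not be solvable, so unipotence of an individual element does not place it in a chosen maximal unipotent subgroup; but your proposed fix --- intersecting the explicit root-subgroup description of $\mathrm{Stab}_G(p_S)$ with $G\cap U^{\SL}$ and arguing that the generating property of $\calb$ forces the triangular directions outside the $\calb$-columns to be determined by those inside --- is left as an unexecuted computation (``should be checked carefully''), and that is precisely the delicate point; as written it is an announced claim, not a proof. The repair, which is the paper's one-line argument, avoids element-wise reasoning altogether: $\mathrm{Stab}_G(p_{S,\a})$ is a closed subgroup of the unipotent group $\mathrm{Stab}_G(r_G)$, hence is itself a unipotent \emph{subgroup} of $G$; by your cancellation step it is contained in $\mathrm{Stab}_G(p_S)$, and it contains $U_S$; since the preceding theorem identifies $U_S$ as the maximal unipotent subgroup of $\mathrm{Stab}_G(p_S)$, the sandwiched unipotent subgroup must equal $U_S$. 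Replacing your final step by this group-level maximality argument closes the proof; without it, the last step remains a genuine gap.
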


In our example \eqref{regularsubgroup} we have
\[\tS=\{\{1\},\{2\},\{1,3\},\{2,4\},\{1,2\},\{1,2,3\},\{1,2,4\},
\{1,2,3,4\}\}\]
and 
\[p_{\tS}=e_1 \oplus e_2 \oplus (e_1 \wedge e_3) \oplus (e_2 \wedge e_4) \oplus (e_1 \wedge e_2) \oplus (e_1 \wedge e_2 \wedge e_3) \oplus (e_1 \wedge e_2 \wedge e_4)\oplus (e_1 \wedge e_2 \wedge e_3 \wedge e_4).\]
\begin{conjecture}
$(\calw_{\tS},p_{\tS})$ is a Grosshans pair for the group $U_S \subset \SL_n(\CC)$, that is, the boundary components of the orbit $\SL_n(\CC) \cdot p_{\tS}$ have codimension at least $2$ in its closure: 
\[\dim(\overline{\SL_n(\CC) \cdot p_{\tS}}\setminus \SL_n(\CC) \cdot p_{\tS}) \le \dim(\overline{\SL_n(\CC) \cdot p_{\tS}})-2.\]
\end{conjecture}

\section{Left Borel-regular subgroups of $\SL_n(k)$}\label{blocksln}

In this section we prove Theorem \ref{main}. As we already mentioned in the introduction of \S\ref{sec:construction}, we only consider the $k=\CC$ case but the arguments work for any algebraically closed field $k$ of characteristic zero which is a subfield of $\CC$. 

Recall from the Introduction that a left Borel-regular subgroup $U_S\subset \SL_n(\CC)$ is determined by a sequence $\t=(\t_1,\ldots, \t_n)$ such that $0 \le \t_i<i$ and the corresponding closed root subset $S=(S_1,\ldots, S_n) \subseteq R^+$ has the form   
\[S_i=\begin{cases} \{1,\ldots \t_i,i\} & \text{ when } t_i >0 \\ \{i\} & \text{ when } t_i=0 \end{cases}
\]
For a subset $Z \subset \{1,\ldots, n\}$ we define $\max(Z)=\max_{z\in Z} z$ to be its maximal element and we let 
\[\t_Z:=\max_{z\in Z} \theta_z.\]

Elements of the generated ring family $\tilde{S}$ are formed by unions of the $S_i$'s and for a subset $Z \subset \{1,\ldots n\}$ the corresponding element of $\tS$ is
\begin{equation}\label{zform}
S_{Z}=\cup_{z\in Z} S_z=\{1,\ldots, \t_Z\} \cup \{z\in Z: z>\t_Z\}.
\end{equation}
That is, \eqref{zform} tells us that $S_Z$ contains all integers between $1$ and $\t_Z$ along with those elements of $Z$ which are bigger than $\theta_Z$. In other words, if $Z\in \tS$ with $\max(Z)=l$ then there exist integers  $\theta_Z<j_1<j_2<\ldots <j_s=l$ such that  
\begin{equation}\label{zform2}
Z=\{1,\ldots, \theta_Z,j_1, \ldots ,j_s=l\},
\end{equation}
see Figure \ref{figureone} for an example.

\begin{rem} Borel-regular subgroups are by definition left and right Borel-regular and they correspond to monotone increasing sequences $0\le \t_1 \le \ldots \le \t_n\le n-1$, therefore $\theta_Z=\theta_{\max(Z)}$ holds.
\end{rem}

\begin{figure}[h]
\begin{center}
\[U^{0,0,1,3,2}=
\begin{array}{|ccccc|}
S_1 & S_2 & S_3 & S_4 & S_5  \\
\hline 
1 & 0 & \bullet & \bullet & \bullet \\
  & 1  & 0 & \bullet  & \bullet   \\
  &   & 1 & \bullet  & 0  \\
  &   &   & 1 & 0  \\
  &   &   &  & 1  \\
\hline
\end{array}\]
\end{center}
\caption{The group corresponding to the sequence $\theta=(0,0,1,3,2)$. Elements of $\tS$ are $S_1=\{1\},S_2=\{2\},S_3=\{1,3\},S_4=\{1,2,3,4\},S_5=\{1,2,5\},S_1 \cup S_2=\{1,2\},S_2\cup S_3=\{1,2,3\}, S_3\cup S_5=\{1,2,3,5\}\text{ and } S_4 \cup S_5=\{1,2,3,4,5\}$.}  
\label{figureone}
\end{figure}

The boundary points in $\overline{B_n \cdot p_{\tilde{S}}}$ are limits of the form 
\begin{equation}\label{limitform}
p^\infty=\lim_{m\to \infty} 
\left( \begin{array}{cccc}
b_{11}^{(m)} & b_{12}^{(m)} & \cdots & b_{1n}^{(m)}\\
0 & b_{22}^{(m)} &   & \\
 & & \ddots & \\
 0 & & & b_{nn}^{(m)}
\end{array}\right)\cdot p_{\tS}.
\end{equation}
The sequence $(b^{(m)})$ in \eqref{limitform} is not unique: different sequences can define the same limit point $p^\infty$. However, any sequence $(b^{(m)})$ has a (not unique) subsequence $(b^{(m_s)})_{s=1}^\infty$ such that for all $1\le i,j \le n$ either $\lim_{s\to \infty} b^{(m_s)}_{ij}$ exists or the modulus $|(b^{(m_s)}_{ij})|$ tends to infinity. Then we can use this subsequence in \eqref{limitform} to define $p^\infty$, see definition \ref{def:vanish} below. 

Next observe that if $i \in S_j=\{1,\ldots, \theta_j,j\}$ for some $1\le i<j \le n$ then $b^{(m)}p_{\tS}$ is independent of the value of $b_{ij}^{(m)}$. Indeed, for a $Z\in \tS$ of the form \eqref{zform2} the expression for $b^{(m)}\cdot p_Z$ contains $b_{ij}^{(m)}$ only if $j\in Z$. However, if $j\in Z$ then $S_j=\{1,\ldots, \theta_j,j\}\subset Z$ and since $i \le \theta_j$, we have $i\in Z$.  Therefore 
\[b^{(m)}\cdot p_Z=\cdots \wedge \underbrace{(b^{(m)}_{jj}e_j+\ldots +b^{(m)}_{ij}e_i+\ldots)}_{b^{(m)}\cdot e_j}\wedge \cdots \wedge \underbrace{(b^{(m)}_{ii}e_i+b^{(m)}_{i-1i}e_{i-1}+\ldots )}_{b^{(m)}\cdot e_i} \wedge \cdots\] 
and $b^{(m)}_{ij}$ vanishes by taking the wedge product. 

This means that changing the value of $b_{ij}^{(m)}$ will not change the point $b^{(m)}p_{\tS}$ so we may assume without loss of generality that
\begin{equation}\label{condition}
b^{(m)}_{ij}=0 \text{ holds for all } m \text{ and } 1\le i< j \le n \text{ such that }  i\in S_j.
\end{equation}

\begin{definition}\label{def:vanish}
We call a sequence $(b^{(m)})$ of matrices \textbf{normalized} if it satisfies \eqref{condition} and for all $1\le i,j \le n$ either $b^\infty_{ij}=\lim_{m\to \infty} b^{(m)}_{ij} \in \CC$ exists or $|(b^{(m)}_{ij})| \to \infty$ as $m\to \infty$. In the latter case we write $b^\infty_{ij}=\infty$. 
The \textbf{vanishing spectrum} of the normalized sequence $(b^{(m)})$ is defined as
\[\mathrm{VSpec}(b^{(m)})=\{i: \lim_{m\to \infty}b_{ii}^{(m)}=0\},\]
%and the cardinality of this set is the {\it vanishing rank}:
%\[\mathrm{VRank}(b^{(m)})=|\mathrm{VSpec}(b^{(m)})|.\]
\end{definition}

In short, the proof of Theorem \ref{main} will follow an induction argument on the size of the vanishing rank of the normalized sequence in \eqref{limitform} which defines $p^\infty$. 
\begin{definition}\label{def:bu}
Let $\mathcal{B}_{\bu}$ denote the set of boundary points in $\overline{B_n \cdot p_{\tilde{S}}}$ which are limits of the form \eqref{limitform} with vanishing diagonal entries indexed by the array $\bu=(u_1,\ldots, u_s)$, that is, 
\[\calb_\bu=\{p^\infty \in \overline{B_n \cdot p_{\tilde{S}}}:\exists \text{ normalized sequence } (b^{(m)}) \text{ s.t } p^\infty=\lim_{m\to \infty}b^{(m)} \cdot p_{\tilde{S}} \text{ and } \mathrm{VSpec}(b^{(m)})=\bu\}.\]
\end{definition}
As we noted above, every boundary point $p^\infty \in \overline{B_n \cdot p_{\tilde{S}}}$ is the limit of the form \eqref{limitform} for a normalized $(b^{(m)})$ and therefore 
\[\overline{B_n \cdot p_{\tilde{S}}}=\cup_{\bu \in 2^{\{1,\ldots ,n\}}}\mathcal{B}_{\bu}.\]
%In fact we can say slightly more: since $(b^{(m)})$ is a sequence in $\SL(n)$, ...FINISH IF NEEDED 
%Therefore 
%\[\overline{B_n \cdot p_{\tilde{S}}}=\cup_{\bu \in 2^{\{1,\ldots ,n-1\}}}\mathcal{B}_{\bu}.\]

According to the next Lemma $\mathcal{B}_{\emptyset}=B\cdot p_{\tilde{S}}$ is the Borel orbit.

\begin{lemma}\label{borelorbit}
$\mathcal{B}_{\emptyset}=B\cdot p_{\tilde{S}}$ is equal to the Borel orbit of $p_{\tS}$ in $\calw_{\tS}$.
\end{lemma}

\begin{proof}[First proof] We use the following fact about solvable groups.

Let $H \subset \GL(V)$ be a solvable algebraic group and let $v\in V$. Then there is an $f\in k[V]$ and a character $\chi:H \to k^*$ such that $f(h\cdot w)=\chi(h)f(w)$ holds for all $b\in B, w\in V$ and $\overline{H \cdot v} \setminus H \cdot v=\{w\in \overline{H \cdot v}:f(w)=0\}$. 

We apply this result with the Borel $H=B_n \subset \SL(n)$. Let $(b^{(m)})$ be a sequence such that
\[p^\infty=\lim_{m\to \infty} b^{(m)}p_{\tS} \in \overline{B_n \cdot p_{\tS}}\setminus B_n \cdot p_{\tS}.\] 
Let $f$ be as above, vanishing on $\overline{B_n \cdot p_{\tS}} \setminus B_n \cdot p_{\tS}$. Then 
\[0=f(p^\infty)=\lim_{m\to \infty}f(b^{(m)}\cdot p_{\tS})=\lim_{m\to \infty}\chi(b^{(m)})f(p_{\tS})\]
Since $f(p_{\tS})\neq 0$, we must have $\lim_{m\to \infty}\chi(b^{(m)})=0$ so $\lim_{m\to \infty}\chi_{ii}(b^{(m)})=0$ for some $1\le i \le n$. 
\end{proof}

We give a second, longer proof because its main idea will turn up in the proof of Theorem \ref{mainc} in \S\ref{strategy} again. 

\begin{proof}[Second proof]
Let $p^\infty=\lim_{m\to \infty} b^{(m)}p_{\tS} \in \calb_{\emptyset}$ be defined by the normalized sequence $(b^{(m)})$ such that  
$\mathrm{VSpec}(b^{(m)})=\emptyset$. By definition for all $1\le i \le n$ either $b^\infty_{ii}=\lim_{m\to \infty} b^{(m)}_{ii}\in \CC$ exists or $\lim_{m\to \infty}|b^{(m)}_{ii}|=\infty$. 
Since
\[b_{11}^{(m)}\cdot \ldots \cdot b_{nn}^{(m)}=1 \text{ holds for all } m,\]
if $b^{(m)}_{ii} \to \infty$ for some $1\le i \le n$ then for some $j\neq i$ $\lim_{m\to \infty} b^{(m)}_{jj}=0$ so $j\in\mathrm{VSpec}(b^{(m)})$, a contradiction. This proves that   
\[b^\infty_{ii}=\lim_{m\to \infty} b^{(m)}_{ii}\in \CC \setminus \{0\} \text { for } 1\le i \le n.\]

Assume that some off-diagonal entries of the normalized sequence $(b^{(m)})$ are not convergent and let
\[v=\min\{j: \exists i\notin S_j \text{ such that } i<j  \text{ and } b^{\infty}_{ij}=\infty\}\]
be the leftmost column containing such entries and choose a $u<v, u\notin S_v$ such that $b^{\infty}_{uv}=\infty$. Then due to \eqref{condition} the coefficient of $e_u \wedge (\wedge_{i\in S_v\setminus \{v\}}e_i)$ in $p_{S_v}^\infty=\lim_{m\to \infty} \wedge_{i\in S_v}b^{(m)}e_i$ would be
\[\lim_{m\to \infty}b_{uv}^{(m)}\cdot \prod_{i\in S_v\setminus \{v\}}b^\infty_{ii}=\infty,\]
a contradiction.  
Hence $b^\infty_{ij}:=\lim_{m\to \infty}b^{(m)}_{ij}\in \CC$ exists for all $1\le i\le j \le n$ and $b_{ii}^{\infty}\neq 0$ so $b^\infty=\lim_{m\to \infty}b^{(m)}\in B_n$ exists and
\[p^\infty=\lim_{m\to \infty}b^{(m)}\cdot p_{\tS}=b^\infty \cdot p_{\tS} \in B_n\cdot p_{\tS}.\]
\end{proof}

%\begin{rem}\label{remark:boundary}
%From \ref{solvable} it follows that the boundary $\overline{H\cdot v} \setminus H \cdot v$ is pure of codimension $1$. With this in mind, it would be enough to prove that any component of the boundary $\overline{B_n \cdot p_{\tS}}\setminus B_n \cdot p_{\tS}$ has the form $\calb_\bu$ where $\bu$ consists of a single element. Unfortunately this is not true as Lemma \ref{crucial} states below..... MISSING

%\end{rem}

\begin{definition}\label{def:covered}
Let $U_S=U^{\t} \subset \SL_n$ be a left Borel-regular subgroup and $Z\subset \{1,\ldots, n\}$. We say that the integer $1\le u < n$ is \textbf{covered by} $Z$ if $u\le \t_Z$. We say that $u$ is covered by $S$ if it is covered by  at least one of $S_1,\ldots, S_n$, that is, $u\le \max_{1\le i \le n} \t_i$. The subset $\bu=\{u_1<\ldots <u_s\} \subset \{1,\ldots, n\}$ is covered by $S$ if all elements of it are covered by $S$. 
%The \textbf{covering set} of $u$ is 
%\[\calc_u=\{Z\in \tS: u \text{ is covered by } Z\},\]
%which is the collection of those elements of $\tS$ which cover $u$. 
%We say that the subset $\bu=\{u_1<\ldots <u_s\} \subset \{1,\ldots, n\}$ is covered by $S$ if all elements of it are covered by $S$ and in this case the covering set is 
%\[\calc_\bu=\{Z\in \tS: u_1 \text{ is covered by } Z\}.\]
 \end{definition}
 
\begin{exit} In Figure \ref{figureone} $S_4$ covers $1,2,3$ and $S_5$ covers $1$ and $2$. Moreover, $u=1,2,3$ are covered by $S$, $u=4$ is not covered. Therefore any subset $\bu$ of $\{1,2,3\}$ is covered by $S$ and the subsets containing $4$ are not covered by $S$.  
\end{exit}

Let $Z \in \tilde{S}$ and let $b^{(m)}$ be a normalized sequence. In what follows we will work with subspaces of $\CC^n$ determined by $b^{(m)}\cdot p_Z$ and the limit of these subspaces.  
\begin{definition}  For the nonzero vectors $v_1,\ldots, v_s \in \CC^n$ let 
\[[v_1\wedge \ldots \wedge v_s]\in \grass_s(\CC^n)\] 
denote the subspace spanned by them. In particular, for a subset $Z\in \tS$ we let
\[[p_Z]=[\wedge_{z\in Z} e_z] \in \grass_{|Z|}\CC^n\]
and 
\begin{equation}\label{defpzinfty}
[p_Z^\infty]=\lim_{m\to \infty} [b^{(m)}\cdot p_Z]
\end{equation}
denotes the limit in $\grass_{|Z|}(\CC^n)$.
\end{definition}

\begin{rem}
If $p^\infty_Z=\lim_{m\to \infty} b^{(m)}\cdot p_Z \in \wedge^{|Z|}\CC^n$ exists then either $p^\infty_Z=0$ or it is a decomposable vector, i.e. $p^\infty_Z=w_1 \wedge \ldots \wedge w_{|Z|}$ for some nonzero vectors $w_1,\ldots, w_{|Z|}$. To see this, note that the Veronese map 
\[\mu: \grass_{|Z|}(\CC^n) \hookrightarrow \PP(\wedge^{|Z|}\CC^n)\]
is a closed embedding and the set of decomposable vectors in $\wedge^{|Z|}\CC^n$ forms the affine cone over the image of $\mu$ and therefore this set is closed. Hence the limit of decomposable elements in $\wedge^n \CC^n$ is either decomposable or zero. 
Moreover, if $0\neq p^\infty_Z=w_1 \wedge \ldots \wedge w_{|Z|}$ then the subspace $[w_1 \wedge \ldots \wedge w_{|Z|}]$ is equal to the limit defined in \eqref{defpzinfty}. 
%therefore 
%\[\mu([p^\infty_{Z}])=\mu(\lim_{m\to \infty}[b^{(m)}\cdot p_Z])=\lim_{m\to \infty} \mu([b^{(m)}\cdot p_Z])=\] 
\end{rem}

The following technical lemma will be used repeatedly in this section. 

\begin{lemma}\label{firstmu}
Let $\bu=\{u_1<\ldots <u_s\}\subset \{1,\ldots, n\}$ be an arbitrary subset. Let 
\[p^\infty =\lim_{m\to \infty} b^{(m)} p_{\tS}=\oplus_{U\in \tilde{S}} p^\infty_{U} \in \calb_\bu\]
 be a limit point defined by the normalized sequence $(b^{(m)})$ such that $\mathrm{VSpec}(b^{(m)})=\bu$. If $\t_i<u_1$ for some $1\le i \le n$ then $\lim_{m\to \infty}b^{(m)}e_i\in \CC^n$ exists.
\end{lemma}

\begin{proof}
Assume $\t_i < u_1$ but $\lim_{m\to \infty}|b^{(m)}e_i| \to \infty$ and $i$ is the smallest index with this property so $e_j^\infty=\lim_{m\to \infty}b^{(m)}e_j \in \CC^n$ for $1\le j<i$. Then 
\[p_{S_i}^\infty=\wedge_{j=1}^{\t_i}e^\infty_j \wedge \lim_{m\to \infty} b^{(m)}e_i \]
does not exist, a contradiction. 
\end{proof}

\begin{rem}\label{u1determined}
Note that $p^\infty=\bigoplus_{V \in \tS}p_V^\infty=\lim_{m\to \infty}b^{(m)}p_{\tS}$ can be an element of several different $\calb_\bu$'s, that is, $\bu=\{u_1<\ldots <u_s\}$ is not uniquely determined by $p^\infty$. However, Lemma \ref{firstmu} implies that $p^\infty$ determines $u_1$ at least:  $u_1$ signs  the first diagonal entry of $b^{(m)}$ which tends to $0$ as $m \to \infty$ and all previous diagonal entries tend to a nonzero constant. Therefore  
\[u_1=\min\{j: p_{\{1,\ldots, j\}}^\infty \subset \mathrm{Span}(e_1,\ldots, e_{j-1}).\]
\end{rem}

\begin{lemma}\label{mustbecovered} Let $\bu \subset \{1,\ldots, n\}$ a subset and assume $\calb_{\bu}$ is non-empty. Then $\bu$ is covered by $S$.  
\end{lemma}

\begin{proof}
Let $p^\infty=\lim_{m\to \infty} b^{(m)}p_{\tS} \in \calb_{u}$ be a point defined by a normalized sequence $(b^{(m)})$ with $\mathrm{VSpec}(b^{(m)})=\{u_1<\ldots <u_s\}$. Assume that $u_s>\max_{1\le i \le n} \t_i$. Then
\[V=\{1,\ldots, n\}\setminus \{u_s\}=\cup_{i \in \{1,\ldots, n\}\setminus \{u_s\}}S_i \in \tS\]
and the coefficient of $\bigwedge_{i\in V} e_i$ in $p^\infty_V$ is 
\[p^\infty_V[\wedge_{i\in V} e_i]=\lim_{m\to \infty} \prod_{i\in V}b^{(m)}_{ii}=\lim_{m\to \infty} \frac{1}{b_{u_su_s}^{(m)}}=\infty,\]
a contradiction. Here we used that $b^{(m)}\in \SL_n$ and hence $\prod_{i=1}^n b^{(m)}_{ii}=1$ for all $m$.

\end{proof}

\begin{definition}\label{def:important}  Let $\bu \subset \{1,\ldots, n\}$ be a subset covered by $S$ and let $p^\infty =\lim_{m\to \infty} b^{(m)} p_{\tS}=\oplus_{U\in \tilde{S}} p^\infty_{U} \in \calb_\bu$ be a boundary point defined by the normalized sequence $(b^{(m)})$ with $\mathrm{VSpec}(b^{(m)})=\bu$. Let $\calv(p^\infty)$ denote the set of those elements in $\tS$ which cover $u_1$ and the corresponding term of $p^\infty$ is nonzero, that is 
\[\calv(p^\infty)=\{U \in \tS: p^{\infty}_U\neq 0, \t_U \ge u_1\}.\]
According to Remark \ref{u1determined} this set is determined by $p^\infty$. It is non-empty: $\{1,\ldots, n\}\in \calv(p^\infty)$ because $\{1,\ldots, n\}$ covers $\bu$ and
\[p_{\{1,\ldots ,n\}}^{\infty}=\lim_{m\to \infty} \det(b^{(m)})e_1 \wedge \ldots \wedge e_n=e_1 \wedge \ldots \wedge e_n \neq 0.\] 
Now define the partial order $\preceq$ on the elements of $\calv(p^\infty)$ 
as follows. For $U,V \in \calv(p^\infty)$ we write $U\preceq V$ if $\t_U<\t_V$ or $\t_U=\t_V$ but $U\subseteq V$. 
Let $\calv(p^\infty)_{\min}$ denote the set of minimal elements of $\calv(p^\infty)$ with respect to $\preceq$. We call elements of $\calv(p^\infty)_{\min}$ minimal for $p^\infty$. 
\end{definition}

A central part of our argument is the following technical proposition.
\begin{prop}\label{firstprop}
Let $\bu \subset \{1,\ldots, n\}$ be a subset covered by $S$ and $p^\infty \in \calb_\bu$. If $Z\in \calv(p^\infty)_{\min}$ then  
\begin{equation}\label{firstpropeq}
[p^{\infty}_Z] \subset \bigcap_{V \in \calv(p^\infty)}[p^{\infty}_{V}]
\end{equation}
\end{prop}

\begin{proof} By definition we can write $p^\infty =\lim_{m\to \infty} b^{(m)} p_{\tS}=\oplus_{U\in \tilde{S}} p^\infty_{U} \in \calb_\bu$ as a limit point where $(b^{(m)})$ is normalized and $\mathrm{VSpec}(b^{(m)})=\bu$.
By definition $Z \in \calv(p^\infty)_{\min}$ satisfies the following properties: 
\begin{enumerate}
\item $Z\in \calv(p^\infty)$, that is, $p^{\infty}_Z\neq 0$ and $\t_Z \ge u_1$. 
\item If $p^{\infty}_U \neq 0$ and $\t_U \ge u_1$ for some $U \in \tS$ then $\t_U \ge \t_Z$ holds.  
\item If $U\in \tS$, $\t_U \ge u_1$ and $U \subsetneqq Z$ then $p^{\infty}_{U}=0$. 
\end{enumerate}

Here (ii) and (iii) together say that $Z\in \calv(p^\infty)_{\min}$. Assume there is a $V \in \calv(p^\infty)$ such that $[p^{\infty}_{Z}] \nsubseteq [p^{\infty}_{V}]$. By definition 
\[p^\infty_Z =\lim_{m\to \infty} \wedge_{z\in Z}b^{(m)} e_{z} \text{ and } p^\infty_V =\lim_{m\to \infty} \wedge_{v\in V}b^{(m)} e_{v},\]
therefore $Z \subset V$ would imply that $[p^{\infty}_{Z}] \subseteq [p^{\infty}_{V}]$. So $Z \setminus V$ must be non-empty. 

Fix a hermitian form $(\cdot, \cdot)$ on $\CC^n$ and let $\pi_V:\CC^n \to [p^\infty_V]$ denote the projection to the subspace $[p^\infty_V]$. For $w\in \CC^n$ let  $w^\perp=w-\pi_V(w)$ denote the orthogonal component. 

In Corollary \ref{corollarytechnical} below  we show that if we drop any subset $\emptyset \neq \Gamma \subseteq Z\setminus V$ from $Z$ then the smaller subset $Z \setminus \Gamma$ is still in $\tS$.  We claim that 
\begin{equation}\label{eqgamma}
p^\infty_{Z\setminus \Gamma}=0 \text{ for all } \emptyset \neq \Gamma \subseteq Z\setminus V.
\end{equation}
Indeed, if $\t_{Z\setminus \Gamma} \ge u_1$ then this is property (iii) above. If $\t_{Z\setminus \Gamma}<u_1$ then, by Lemma \ref{firstmu}, 
\begin{equation}\label{ezinfty}
e_{z}^\infty=\lim_{m\to \infty}b^{(m)}e_{z}\in \CC^n \text{ exists for all } z \in Z\setminus \Gamma.
\end{equation}

 Moreover, $\t_Z \ge u_1$ hence $\{1,2,\ldots, u_1\} \subset \{1,2,\ldots, \theta_Z\} \subset Z$. On the other hand property (ii) tells us that $\t_V \ge \t_Z$ and therefore 
 \[\{1,2,\ldots, \theta_Z\} \subseteq \{1,2,\ldots, \theta_V\} \subset V\]
 which implies that 
 \[\{1,2,\ldots, u_1\} \subset Z \setminus \Gamma\]
and therefore 
\[p^\infty_{Z\setminus \Gamma}=e_{u_1}^\infty \wedge (\wedge_{z \in Z\setminus \{\Gamma \cup \{u_1\}\}}e_z^\infty)=b^{\infty}_{u_1u_1}e_{u_1} \wedge b^{\infty}_{u_1-1u_1-1}e_{u_1-1}\wedge \ldots \wedge  b^{\infty}_{11}e_{1} \wedge(\wedge_{z \in Z\setminus \{\Gamma \cup \{1,\ldots, u_1\}\}}e_z^\infty.\]
But $b^{\infty}_{u_1u_1}=0$ and by \eqref{ezinfty} all other terms are finite so this wedge product is $0$ and \eqref{eqgamma} is proved.  

Then \eqref{eqgamma} implies that
\begin{equation}\label{piv}
0=\pi_V(p^\infty_{Z\setminus \Gamma})=\lim_{m\to \infty} \wedge_{j\in Z\setminus \Gamma} \pi_V(b^{(m)}\cdot e_j) \text{ for all } \emptyset \neq \Gamma \subseteq Z\setminus V
\end{equation}
Hence 
\begin{multline}\nonumber 
p^\infty_{Z}=\lim_{m\to \infty} \bigoplus_{\Gamma \subseteq Z\setminus V} \wedge_{j \in \Gamma} b^{(m)}e_j^{\perp}\bigwedge \wedge_{j \in Z \setminus \Gamma} \pi_V(b^{(m)}\cdot e_j)=\\
=\pi_V(p^\infty_Z)\oplus \lim_{m\to \infty} \bigoplus_{\emptyset \neq \Gamma \subseteq Z\setminus V} \wedge_{j \in \Gamma} b^{(m)}e_j^{\perp}\bigwedge \wedge_{j \in Z \setminus \Gamma} \pi_V(b^{(m)}\cdot e_j)
\end{multline}
By \eqref{piv} all terms corresponding to nonempty $\Gamma$ vanish and therefore $[p^{\infty}_{Z}] \subseteq [p^{\infty}_{V}]$ unless there is a $z \in Z\setminus V$ such that the limit norm $\lim_{m\to \infty}|b^{(m)}e_z^\perp|=\infty$. 

However, $V \cup \{z\}=V \cup S_z$ because $z\in Z$ and $S_z=\{1,\ldots, \theta_z,z\} \subseteq \{1,\ldots, \theta_Z,z\}\subseteq \{1,\ldots, \theta_V,z\}$ since $\theta_Z \le \theta_V$ by property (ii). Therefore $V \cup \{z\}\in \tS$ and then 
\[p^\infty_{V \cup \{z\}}=\lim_{m\to \infty} b^{(m)}e_z^\perp \wedge p^\infty_{V}\]
does not exist (the limit is not finite) which is a contradiction. So $[p^{\infty}_{Z}] \subseteq [p^{\infty}_{V}]$ holds, and Proposition \ref{firstprop} is proved. 
\end{proof}

\begin{corollary}\label{pzinftyisdetermined}
Assume $Z_1,Z_2 \in \calv(p^\infty)_{\min}$ are minimal subsets for $p^\infty$. Then $[p_{Z_1}^\infty]=[p_{Z_2}^\infty]$.
\end{corollary}

\begin{proof}
As $Z_1$ and $Z_2$ are both in $\calv(p^\infty)$, by Proposition \ref{firstprop} we have $[p_{Z_1}^\infty] \subseteq  [p_{Z_2}^\infty]$ and $[p_{Z_2}^\infty] \subseteq [p_{Z_1}^\infty]$.
\end{proof}

Here are the small technical statements on $\tS$ we used in the proof of Proposition \ref{firstprop}.

\begin{lemma}\label{dropelements}  Let $Z\in \tS$ and let $z \in Z$ be an element such that $z>\theta_Z$. Then $Z\setminus \{z\} \in \tS$. 
\end{lemma}
\begin{proof}
Enough to show that $Z \setminus \{z\}=\cup_{i\in Z \setminus \{z\}} S_i$. The direction $\subseteq$ is clear as $i\in S_i$ for all $i$. For $\supseteq$ note that if $i\in Z \setminus \{z\}$ and $z>\t_Z$ then
\[S_i=\{1,2,\ldots, \theta_i\}\cup \{i\} \subset \{1,2,\ldots, \theta_Z\}\cup \{i\} \subseteq Z \setminus \{z\}\]
\end{proof}

\begin{corollary}\label{corollarytechnical}
 If $Z,V \in \tS$ such that $\t_V \ge \t_Z$ then for any $\Gamma \subseteq Z\setminus V$ we have $Z \setminus \Gamma \in \tS$.
\end{corollary}
\begin{proof}
Since $\theta_V \ge \theta_Z$, any $z\in Z\setminus V$ must satisfy $z>\theta_Z$ and the statement follows from Lemma \ref{dropelements}. 
\end{proof}

\begin{definition}\label{def:bur} Let $Z\in \calv(p^\infty)_{\min}$ be a minimal subset for $p^\infty$. By Corollary \ref{pzinftyisdetermined} the subspace $[p_Z^\infty]$ is independent of the choice of $Z$ and depends only on $p^\infty$. Assume that $[p^\infty_{Z}] \subset \mathrm{Span}(e_1,\ldots, e_r)$ but $[p^\infty_{Z}] \not\subset \mathrm{Span}(e_1,\ldots, e_{r-1})$ for some $r$. We call this $r$ the \textbf{width} of $p^\infty$ and denote it by $\omega(p^\infty)$. We will also say that $[p^\infty_{Z}]$ has width $r$. Since $u_1\le\t_Z$ and $\{1,\ldots, \t_Z\} \subsetneq Z$ this must  then satisfy $u_1\le \t_Z <\omega(p^\infty)$. 
Let  
\[\calb_{\bu}^r=\{p^\infty \in \calb_{\bu}: \omega(p^\infty)=r\}\]
denote the set of points in $\calb_{\bu}$ of width $r$.
\end{definition}

Then we have a (not necessarily disjoint) finite decomposition
\[\calb_{\bu}=\cup_{u_1 < r}\calb_{\bu}^r.\] 

\begin{rem}\label{remark:basechange}
Points of the boundary sets $\calb_\bu^r$ are defined as limits of normalized sequences and hence the stratification $\calb_{\bu}=\cup_{u_1 < r}\calb_{\bu}^r$ a priori depends on the choice of the basis $\{e_1,\ldots, e_n\}$ of $\CC^n$. Let us indicate this dependence temporarily as $\calb_\bu^r(e_1,\ldots, e_n)$. We show that changing this basis with a unipotent element of the Borel $B_{n}$ leaves all $\calb_\bu^r$ unchanged. 
More precisely, let $A\in  B_n$ define the new basis
\[\bar{e}_i=A \cdot e_i \text{ for } i=1\ldots, n\] 
Let $p_{\tS}(\bar{e}_1,\ldots, \bar{e}_n)$ denote the base point $p_{\tS}$ written in the new basis. Then  
\[p^\infty=\lim_{m\to \infty} b^{(m)} p_{\tS}=\lim_{m\to \infty} (A b^{(m)} A^{-1}) p_{\tS}(\bar{e}_1,\ldots, \bar{e}_n)\] 
If $A$ is unipotent, then the new sequence $\bar{b}^{(m)}=A b^{(m)} A^{-1}$ 
has the same vanishing spectrum $\bu$. Moreover, since
\[\mathrm{Span}(e_1,\ldots, e_i)=\mathrm{Span}(\bar{e}_1,\ldots, \bar{e}_i) \text{ for } 1\le i \le n,\]
the width of $[p_Z^\infty]$ in this neq basis is $r$ again. Therefore 
\[\calb_\bu^r(e_1,\ldots e_n)=\calb_\bu^r(\bar{e}_1,\ldots, \bar{e}_n) \text{ for all } \bu, r.\]
In short, changing the basis with a unipotent element of the Borel will leave the subsets $\calb_\bu^r$ unchanged. 
\end{rem}

\begin{rem}\label{conditiononer} Let $p^\infty \in \calb^r_\bu$ and $Z\in \calv(p^\infty)_{\min}$. By definition this means that $[p^\infty_{Z}] \subset \mathrm{Span}(e_1,\ldots, e_r)$ but $[p^\infty_{Z}] \not\subset \mathrm{Span}(e_1,\ldots, e_{r-1})$ so there is a vector 
 \[w=e_r+w_{r-1}e_{r-1}+\ldots +w_1e_1 \in [p^\infty_{Z}]\]
This $w$ is not necessarily unique, we fix one. The base change 
\[\overline{e}_i=\begin{cases} e_i & \text{ if } i\neq r \\ e_r+w_{r-1}e_{r-1}+\ldots +w_1e_1 & \text{ if } i=r\end{cases}\] 
is defined by a unipotent element of $B_{n}$. According to Remark \ref{remark:basechange} changing $\{e_1,\ldots, e_n\}$ to the new basis $\{\overline{e}_1,\ldots, \overline{e}_n\}$ leaves the boundary sets $\calb_\bu^r$ unchanged for all $\bu$ and $r$ but in this new basis $\bar{e}_r=w \in [p_Z^\infty]$ holds. 
\end{rem}

\begin{prop}\label{crucial}
Let $s\ge 2$ and $\bu=\{u_1<\ldots <u_s\}$ be a subset covered by $S$. Then  
\begin{enumerate}[(a)]
\item If $r \notin \bu$ and $\t_r <u_1$ then there is a continuous injection $\rho:\calb_{\bu}^r \hookrightarrow \calb_{\{u_2,\ldots, u_s\}\cup \{r\}}$ and therefore $\dim \calb_{\bu}^r \le \dim \calb_{\{u_2,\ldots, u_s\}\cup \{r\}}$.
\item If $r\in \bu$ then $\calb_{\bu}^r \subset \overline{\calb_{\{u_2,\ldots, u_s\}}}\setminus \calb_{\{u_2,\ldots, u_s\}}$.  
\item If $r\notin \bu$ and $\t_r \ge u_1$ then $\calb_{\bu}^r \subset \overline{\calb_{\{u_2,\ldots, u_s\}}}\setminus \calb_{\{u_2,\ldots, u_s\}}$ or $\calb_{\bu}^r \subset \overline{\calb_{\{u_2,\ldots, u_s\}\cup \{r\}}}\setminus \calb_{\{u_2,\ldots, u_s\}\cup \{r\}}$.
\end{enumerate}
\end{prop}

\begin{proof} 
To prove (a) assume that $r \notin \bu$ and $\t_r <u_1$. Let 
\[p^\infty =\lim_{m\to \infty} b^{(m)} p_{\tS}=\oplus_{U\in \tilde{S}} p^\infty_{U} \in \calb^r_\bu\]
be a limit point such that $\mathrm{VSpec}(b^{(m)})=\bu$ and $Z \in \calv(p^\infty)_{\min}$. By Remark \ref{conditiononer} we can assume that $e_r \in [p^\infty_{Z}]$.
 
 According to Lemma \ref{firstmu} $\lim_{m\to \infty}b^{(m)}_{ii}\in \CC$ exists whenever $\t_i <u_1$ and since $r \notin \bu$, this limit is nonzero for $i=r$: 
\[b_{rr}^\infty:=\lim_{m\to \infty}b^{(m)}_{rr} \in \CC \setminus \{0\}.\] 
Define the modified sequence
\begin{equation}\label{modifiedsl}
\tilde{b}_{ij}^{(m)}=\begin{cases} b_{rr}^\infty  & (i,j)=(u_1,u_1) \\ \frac{1}{b_{rr}^\infty}\cdot b^{(m)}_{rr}\cdot b^{(m)}_{u_1u_1} & (i,j)=(r,r) \\ \frac{1}{b_{rr}^\infty} \cdot b_{rj}^{(m)}\cdot b_{u_1,u_1}^{(m)} & \text{ for } i=r \text{ and } j> r \text{ with } \t_j \ge u_1\\ b_{ij}^{(m)} & \text{otherwise} \end{cases}
\end{equation}
In short, we fix the diagonal entry $b_{u_1u_1}^{(m)}$ to be the nonzero constant $b_{rr}^\infty$ and multiply the entries in the $r$th row of those columns which cover $b_1$ by $\frac{1}{b_{rr}^\infty} b_{u_1u_1}^{(m)}$.
Then the new sequence still sits in $\SL_n(\CC)$ and part (a) of Proposition \ref{crucial} follows from the following three statements:
\begin{enumerate}
\item $\tilde{p}^\infty=\lim_{m\to \infty} \tilde{b}^{(m)}p_{\tS}$ exists and therefore the map $\tilde{\rho}: p^\infty \mapsto \tilde{p}^\infty$ is well-defined. 
\item $\tilde{p}^{\infty} \in \calb_{\{u_2,\ldots, u_s\}\cup \{r\}}$
\item $\tilde{\rho}: p^\infty \mapsto \tilde{p}^\infty$ is injective. 
\end{enumerate}

To prove (i) and (ii) we first show that  
\begin{equation}\label{firsttoprove}
\text{ if } \t_V \ge u_1 \text{ then }  \tilde{p}^\infty_V=p^\infty_V.
\end{equation}
Note that in this case $u_1 \in \{1,2,\ldots, \t_V\} \subseteq V$ and therefore 
\[p^{\infty}_V=\lim_{m\to \infty}\wedge_{v\in V}b^{(m)}e_v=\lim_{m\to \infty} \left(\Pi_{i=1}^{\t_V}b^{(m)}_{ii} \bigwedge_{i=1}^{\t_V}e_i \wedge \bigwedge_{\t_V<v\in V}b^{(m)}e_v\right).\]
First we study the case when $r \le \t_V$. If $v>\t_V \ge r$ then $b^{(m)}e_v=\tilde{b}^{(m)}e_v$ and therefore the second product remains the same by changing $b^{(m)}$ to $\tilde{b}^{(m)}$. The product $\Pi_{i=1}^{\t_V}b^{(m)}_{ii}$ of the first $\t_V$ diagonal entries in $b^{(m)}$ and $\tilde{b}^{(m)}$ are again equal, so the first product does not change either giving us $\tilde{p}^\infty_V=p^\infty_V$.

Assume now that $r > \t_V$ and recall that $e_r\in [p_Z^{\infty}]\subset [p^\infty_V]$. Hence the term $e_r$ must be selected in each term of the expansion of the second product, that is, if $\pi^r:\CC^n \to \CC e_r$ denotes the projection of a vector to the line spanned by $e_r$ then $e_r \subset [p^\infty_V]$ implies that 
\begin{multline}\label{expansion}
p^{\infty}_V=\lim_{m\to \infty} \sum_{v \in V} \left(\pi^r(b^{(m)} e_v) \wedge \bigwedge_{i \in V\setminus v} b^{(m)}e_i\right)= \\\lim_{m\to \infty} \Pi_{i=1}^{\t_V}b^{(m)}_{ii} \bigwedge_{i=1}^{\t_V}e_i \wedge \sum_{r\le v\in V} \left(b^{(m)}_{rv} e_r \wedge \bigwedge_{\t_V<i \in V\setminus \{v\}}b^{(m)}e_i\right) .
\end{multline}
It is easy to see that if $r\le v$ and $\t_v < u_1$ then the corresponding term in the direct sum on the right hand side has zero contribution  Indeed, by Lemma \ref{firstmu} 
\begin{equation}\label{offdiagonalzero}
b^\infty_{rv}=\lim_{m\to \infty}b^{(m)}_{rv}\in \CC \text{ for } r\le v, \t_v <u_1.
\end{equation}
On the other hand $v\ge r>\t_V$ holds and therefore by Lemma \ref{dropelements} $V \setminus \{v\}\in \tS$. Moreover, since $\t_V >u_1$ but $\t_v <u_1$, we must have $\t_{V\setminus \{v\}} \ge u_1$. Now 
\begin{itemize}
\item if $p^\infty_{V\setminus \{v\}}=0$ then by \eqref{offdiagonalzero} we have $b^{\infty}_{rv}e_r \wedge p^\infty_{V\setminus \{v\}}=0$,
\item if $p^\infty_{V\setminus \{v\}}\neq 0$ then by Proposition \ref{firstprop} $e_r \in [p_Z^\infty] \subset [p^\infty_{V\setminus \{v\}}]$ and therefore by \eqref{offdiagonalzero} we have $b^{\infty}_{rv}e_r \wedge p^\infty_{V\setminus \{v\}}=0$ again. 
\end{itemize} 
In both cases we get 
\[0=b^{\infty}_{rv}e_r \wedge p^\infty_{V\setminus \{v\}}=\lim_{m\to \infty} (\Pi_{i=1}^{\t_V}b^{(m)}_{ii}) \bigwedge_{i=1}^{\t_V}e_i \wedge \left(b^{\infty}_{rv}e_r \wedge \bigwedge_{\t_V<i \in V\setminus \{v\}}b^{(m)}e_i\right).\]
So in \eqref{expansion} only those terms have nonzero contributions where $\t_v \ge u_1$ and 
\[p^{\infty}_V=\lim_{m\to \infty} \left(\Pi_{i=1}^{\t_V}b^{(m)}_{ii} \bigwedge_{i=1}^{\t_V}e_i \wedge \sum_{\substack{\t_v\ge u_1\\ r\le v \in V}} \left(b^{(m)}_{rv} e_r \wedge \bigwedge_{\t_V<i \in V\setminus \{v\}}b^{(m)}e_i\right)\right) .
\]
Replacing $b^{(m)}$ with $\tilde{b}^{(m)}$ clearly does not change the right hand side, so $\tilde{p}^\infty_V=p^\infty_V$ is proved for this case too. We completed the proof of \eqref{firsttoprove}.

Next, if $V \in \tS$ but $\t_V < u_1$ then $e_v^\infty=\lim_{m\to \infty}b^{(m)}e_v\in \CC^n$ exists for all $v\in V$ by Lemma \ref{firstmu} and therefore
\[p^\infty_V=\lim_{m\to \infty}\wedge_{v\in V}b^{(m)}e_v=\wedge_{v\in V}e_v^\infty.\]
Similarly 
\[\tilde{p}^\infty_V=\lim_{m\to \infty}\wedge_{v\in V}\tilde{b}^{(m)}e_v=\wedge_{v\in V} \tilde{e}_v^\infty.\]
where 
\begin{equation}\label{tildebmev}
\tilde{e}_v^\infty=\lim_{m\to \infty}\tilde{b}^{(m)}e_v=\begin{cases} e_v^\infty & v\neq u_1,r \\ e_v^\infty+b_{rr}^\infty e_{u_1} & v=u_1 \\ \pi^{r-1}(e_{r}^\infty) & v=r\end{cases}
\end{equation}
where $\pi^{r-1}:\CC^n \to \mathrm{Span}(e_1,\ldots, e_{r-1})$ is the projection. This is because $\lim_{m\to \infty} b^{(m)}_{u_1,u_1}=0$ by definition and the last coordinate of $b^{(m)}e_r$ tends to $0$:
\[\lim_{m \to \infty} \frac{1}{b_{rr}^\infty}\cdot b^{(m)}_{rr}\cdot b^{(m)}_{u_1u_1}=0.\] 
This means that $\tilde{p}^{\infty}_V=\lim_{m\to \infty} \tilde{b}^{(m)}p_V$ exists for all $V\in \tS$ and $\tilde{p}^{\infty} \in \calb_{\{u_2,\ldots, u_s\}\cup \{r\}}$, so (i) and (ii) is proved.  

To prove (iii) (the injectivity of $\tilde{\rho}:p^\infty \mapsto \tilde{p}^\infty$) note that by \eqref{firsttoprove} %and \eqref{tildebmev}
\[p_V^\infty = \tilde{p}^\infty_V \text{ whenever } \t_V\ge u_1\] % \text{ or } \t_V <u_1 \text{ but } u_1,r \notin V.\]
so $\tilde{\rho}$ is the identity (and therefore injective) on these coordinates. It remains to check injectivity on the other coordinates.

Take two points $p^\infty \neq (p')^\infty$ in  $\calb_\bu^r$ such that $p_V^\infty \neq (p'_V)^\infty$ for some $V$ with $\t_V < u_1$. This means that  $e_v^\infty \neq (e'_v)^\infty$ for some $v \in V$ satisfying $\t_v<u_1$. Let $v$ be minimal with this property. 
Then 
\[p_{S_v}^\infty-(p'_{S_v})^\infty=(e_v^\infty-(e'_v)^\infty)\wedge \bigwedge_{i=1}^{\t_v}b_{ii}^\infty e_i\neq 0\]
and using \eqref{tildebmev} we have the following cases:
\begin{itemize}  
\item If $v\neq u_1,r$ then $\tilde{p}_{S_v}^\infty-(\tilde{p}'_{S_v})^\infty=p_{S_v}^\infty-(p'_{S_v})^\infty \neq 0$.
\item If $v=u_1$ then $\tilde{p}_{S_{u_1}}^\infty-(\tilde{p}'_{S_{u_1}})^\infty=p_{S_{u_1}}^\infty-(p'_{S_{u_1}})^\infty+(b_{rr}^\infty-(b'_{rr})^\infty)e_{u_1}\wedge \bigwedge_{i=1}^{\t_{u_1}}b_{ii}^\infty e_i \neq 0$ because $p_{S_{u_1}}^\infty-(p'_{S_{u_1}})^\infty$ does not contain $e_{u_1}$ due to the fact that $b^\infty_{u_1,u_1}=0$.
\item If $v=r$ then $\tilde{p}_{S_r}^\infty-(\tilde{p}'_{S_r})^\infty=\pi^{r-1}(e_r^\infty-(e'_r)^\infty)\wedge \bigwedge_{i=1}^{\t_r}b_{ii}^\infty$, 
and if this is $0$ then the $e_r$ coordinate of $e_r^\infty$ and $(e'_r)^\infty$ are not equal, that is, $b_{rr}^\infty-(b'_{rr})^\infty \neq 0$. But then again, as in the previous case we have 
\[\tilde{p}_{S_{u_1}}^\infty-(\tilde{p}'_{S_{u_1}})^\infty=p_{S_{u_1}}^\infty-(p'_{S_{u_1}})^\infty+(b_{rr}^\infty-(b'_{rr})^\infty)e_{u_1}\wedge \bigwedge_{i=1}^{\t_v}b_{ii}^\infty e_i \neq 0.\]
\end{itemize}
In any case, $\tilde{p}^\infty$ and $(\tilde{p}')^\infty$ differ in at least one term, proving injectivity of $\tilde{\rho}$. 
 
Next we prove (b)  and (c) of Proposition \ref{crucial} simultaneously. Assume that $r\in \bu$ or $\t_r \ge u_1$. The problem with this case is that $b_{rr}^\infty:=\lim_{m\to \infty}b^{(m)}_{rr} \in \CC \setminus \{0\}$ does not  necessarily hold any more and the limit can be $\infty$ or $0$. In both cases $\tilde{b}^{(m)}$ is ill-defined in \eqref{modifiedsl}.

Fix a nonzero $\delta \in \CC$ and define the sequence 
\begin{equation}\label{modifiedsldelta} \tilde{b}_{ij}^{(m),\delta}=\begin{cases} \delta  & (i,j)=(u_1,u_1) \\ \frac{1}{\delta}b^{(m)}_{rr}\cdot b^{(m)}_{u_1u_1} & (i,j)=(r,r) \\ \frac{1}{\delta}b_{rj}^{(m)}\cdot b_{u_1,u_1}^{(m)} & \text{ if } i=r, j> r \text{ and } \t_j \ge u_1 \\ b_{ij}^{(m)} & \text{otherwise} \end{cases}
\end{equation}
In short, we increase the diagonal entry $b_{u_1u_1}^{(m)}$ to be constant $\delta$ and multiply the entries in the $r$th row above the diagonal by $\frac{1}{\delta}b_{u_1u_1}^{(m)}$ whenever $\t_j \ge u_1$.
Then the new sequence still sits in $\SL_n(\CC)$. If $r\in \bu$ then $\mathrm{VSpec}(\tilde{b}^{(m),\delta})=\{u_2,\ldots, u_s\}$. If $r\notin \bu$ and $\t_r \ge u_1$ then 
\[\mathrm{VSpec}(\tilde{b}^{(m),\delta})=\begin{cases} \{u_2,\ldots, u_s\}& \text{ if } \lim_{m\to \infty}b^{(m)}_{rr}b^{(m)}_{u_1u_1} \neq 0 \\  \{u_2,\ldots, u_s\}\cup \{r\} & \text{ if } \lim_{m\to \infty}b^{(m)}_{rr}b^{(m)}_{u_1u_1} = 0 \end{cases}.\]
In any case, if the limit exists then $\tilde{p}^{\infty,\delta}=\lim_{m\to \infty} \tilde{b}^{(m),\delta}p_{\tS} \in \calb_{u_2,\ldots, u_s}$ or $\tilde{p}^{\infty,\delta} \in \calb_{\{u_2,\ldots, u_s\}\cup \{r\}}$.

 The same argument as for part (a) shows that 
\begin{itemize}
\item If $V \in \tS$ with $\t_V \ge u_1$ then $\tilde{p}^{\infty,\delta}_V=p^\infty_V$. 
\item If $V \in \tS$ with $\t_V < u_1$ then $\lim_{m\to \infty}b^{(m)}e_v\in \CC^n$ and $\lim_{m\to \infty}\tilde{b}^{(m),\delta}e_v\in \CC^n$ exists for all $v\in V$ by Lemma \ref{firstmu} and therefore 
\[p^\infty_V=\lim_{m\to \infty}\wedge_{v\in V}b^{(m)}e_v=\wedge_{v\in V} \lim_{m\to \infty}b^{(m)}e_v\]
and the same holds with $b^{(m)}$ replaced by $\tilde{b}^{(m),\delta}$. 
But
\[\lim_{m\to \infty}\tilde{b}^{(m),\delta}e_v=\begin{cases} \lim_{m\to \infty}b^{(m)}e_v & v\neq u_1 \\ \lim_{m\to \infty}b^{(m)}e_{u_1}+\delta e_{u_1} & v=u_1 \end{cases}.\]
In particular, when $\delta \to 0$ the point $\tilde{p}^{\infty,\delta}_V$ tends to $p^\infty_V$. 
\end{itemize}
In short,  
\begin{equation}\label{tends}
\lim_{\delta \to 0} \tilde{p}^{\infty,\delta}=p^\infty
\end{equation} 
and therefore
\[p^{\infty} \in \begin{cases} \overline{\calb_{\{u_2,\ldots, u_s\}}} & \text{ if } r\in \bu \\ \overline{\calb_{\{u_2,\ldots, u_s\}}} \text{ or } \overline{\calb_{\{u_2,\ldots, u_s\} \cup \{r\}}} & \text{ if } \t_r \ge u_1\end{cases} . \]
But $p^\infty \in \calb_{\{u_1,\ldots, u_s\}}$ so $p^{\infty}\notin \calb_{(u_2,\ldots, u_s)}$ and $p^{\infty}\notin \calb_{\{u_2,\ldots, u_s\}\cup \{r\}}$ and we are done.  

\end{proof}

We are ready to finish the proof of Theorem \ref{main} for $G=\SL_n(\CC)$. The key observation is that in Proposition \ref{crucial} the smallest element of the vanishing spectrum $\{u_2,\ldots ,u_s\}\cup \{r\}$--which is either $r$ or $u_2$--is strictly bigger than $u_1$. We call a sequence $\mathbf{r}=\{r_0<r_1<\ldots <r_t\}$ compatible with $\bu^0=\{u_1^0<\ldots <u_s^0\}$ if 
\[r_i \notin \bu^i, \t_{r_i}<\bu^i_1 \text{ for } i=1, \ldots, t-1 \text { but } r_t \in \bu^t \text{ or } \t_{r_t}\ge \bu^t_1 \]
where $\bu^1,\ldots, \bu^{t-1}$ are defined inductively as
\[\bu^i=\bu^{i-1} \setminus \{\min(\bu^{i-1})\} \cup r_{i-1}.\]
Here $\min(\bu)$ is the minimal element of the set $\bu$. For a compatible sequence $\br$ Proposition \ref{crucial} gives a sequence of injective maps  
\[\rho_i: \calb_{\bu^{i-1}}^{r_{i-1}} \hookrightarrow \calb_{\bu^i} \text{ for } i=1,\ldots, t.\]
We define by induction the subsets
\[\calb_{\bu^{i-1}}^{r_{i-1},\ldots, r_t}:=\rho_i^{-1}(\calb_{\bu^{i}}^{r_{i},\ldots, r_t}) \text{ for } 1\le i \le t.\]
Then Proposition \ref{crucial} gives a chain of injections
\begin{equation}\label{chain}
\calb_{\bu^0}^{r_0,\ldots, r_t} \hookrightarrow \calb_{\bu^1}^{r_1,\ldots, r_t} \hookrightarrow \ldots \hookrightarrow\calb_{\bu^{t-1}}^{r_{t-1},r_t} \hookrightarrow \calb_{\bu^t}^{r_t}.
\end{equation}
For a covered sequence $\bu$ let $\Gamma_{\bu}$ denote the set of $\bu$-compatible sequences. Then  
\[\calb_\bu=\bigcup_{\br \in \Gamma_{\bu}}\calb_{\bu}^{\br}\]
so it is enough to prove that the codimension of each $\calb_{\bu}^{\br}$ in $\overline{B\cdot p_S}$ is at least two. 
This is clear if $|\bu_t|\ge 2$. Indeed, by definition $r_t \in \bu^t$ or $\t_{r_t} \ge \bu^t_1$ holds and therefore by Proposition \ref{crucial} (b) and (c) \eqref{chain} can be extended with one of the embeddings
\[\calb_{\bu^t}^{r_t} \hookrightarrow \overline{\calb_{\bu^{t}\setminus \{\min(\bu^t)\}}} \setminus \calb_{\bu^{t}\setminus \{\min(\bu^t)\}} \text{ or } \calb_{\bu^t}^{r_t} \hookrightarrow \overline{\calb_{\bu^{t}\setminus \{\min(\bu^t)\}\cup \{r_t\}}} \setminus \calb_{\bu^{t}\setminus \{\min(\bu^t)\}\cup \{r_t\}}.\]
 So $\calb_{\bu^t}^{r_t}$ sits in the boundary of a boundary component and therefore has codimension at least $2$ in $\overline{B\cdot p_{\tS}}$. So Theorem \ref{main} is reduced to the special case when $\bu=\{u\}$ has a single element. In order to handle this case need one more definition.   

%\begin{lemma}
%Let $\bu=\{u_1 <\ldots <u_s\} \subset \{1,\ldots, n\}$ with $s \ge 2$ be an uncovered subset, that is $u_1>\t_n$. Then $\calb_\bu \subset \overline{\calb_{u_2,\ldots, u_s}}\setminus \calb_{u_2,\ldots, u_s}$ and therefore  
%\[\dim \calb_\bu \le \dim \overline{B\cdot p_S}-2.\]
%\end{lemma}

%\begin{proof}
%Repeat the proof of Lemma \ref{crucial} (2) with $r=u_2$ and $m(\bu)=n+1$. In the definition of $\tilde{b}^{(m),\delta}$ we then drop the line referring to $i=r$ and $j\ge \max(m(\bu),r)$. Then $\lim_{m\to \infty}b^{(m)}e_i\in \CC^n$ exists for all $1\le i \le n$ by Lemma \ref{firstmu} and therefore $\lim_{\delta \to 0} \tilde{p}^{\infty,\delta}_V=p^\infty_V$ for all $V\in \tS$.

%\end{proof}

\begin{definition}\label{ivfixed} Let $1\le i \le n-1$ an integer and $v\in \CC^n$ such that $v\notin  \mathrm{Span}(e_1,\ldots, e_i)$. A point $p=\bigoplus_{U\in \tS} p_U \in \calw_{\tS}$ is called $(i,v)$-fixed if the stabiliser $G_p \subset \SL_n(\CC)$ of $p$ contains the one parameter subgroup  
\[T^{i,v}(\lambda): e_j \mapsto \begin{cases} e_j & j\neq i \\ e_i+\lambda v & j=i \end{cases} \text{ for } \lambda\in \CC.\]
A subset $\calb \subset \overline{B_n\cdot p_{\tS}}$ is called $i$-fixed if every point of $\calb$ is $(i,v)$-fixed for some $v\notin  \mathrm{Span}(e_1,\ldots, e_i)$. 
\end{definition}

%\begin{lemma}\label{fundamental}
%Let $\calb \subset \overline{B_n \cdot p_{\tilde{S}}}$ be an $i$-fixed Borel invariant subvariety for some $1\le i \le n-1$. Then 
%\[\dim \overline{\SL_n(\CC)\cdot \calb} \le \dim \SL_n(\CC) -\dim B_n +\dim \calb -1. \]
 
%\end{lemma}

%\begin{proof} Let $x \in \calb$ be a smooth point such that $\SL_n(\CC)\cdot x$ is closed in an affine neighbourhood $X$ of $x$ in $\SL_n(\CC)\calb$. Let $G_x \subset \SL_n(\CC)$ denote the stabiliser of $x$ in $\SL_n(\CC)$ and $G^{B_n}_x=G_x\cap B_n$ the stabiliser in the Borel.  Luna's Slice Theorem provides a locally closed subvariety $V_x \subset X$, an open $U \subset X$ and a surjective etal\'e  morphism 
%\[\phi: \SL_n(\CC) \times_{G_x} V \to U.\]
%There is a surjective map $\pi: \calb \to V$ sending every point to (one of the finite number of) the point(s) in its $\SL_n(\CC)$ orbit sitting on the slice $V$. Since $\calb$ is Borel-invariant, the dimension of the generic fibre of $\pi$ is $\dim B_n-\dim G_x^{B_n}$ and hence
%\[\dim V = \dim \calb - \dim B_n +\dim G_x^{B_n}.\]
%Since $\calb$ is big $i$-hearted,  $\dim G_x\ge \dim G_x^{B_n} +1$ and we get the desired inequality
%\begin{multline}\nonumber
%\dim \overline{\SL_n(\CC)\calb}= \dim \SL_n(\CC)-\dim G_x+\dim V \le \dim \SL_n(\CC)-\dim G_x^{B_n}-1+\dim V=\\
%\dim \SL_n(\CC) -\dim B_n +\dim \calb -1.
%\end{multline}
%\end{proof}

\begin{lemma}\label{fundamentalcor}
Let $\calb \subset \overline{B_n \cdot p_{\tilde{S}}}$ be an $i$-fixed Borel invariant subvariety for some $1\le i \le n-1$. Then 
\[\dim \overline{\SL_n(\CC)\cdot \calb} \le \dim \overline{\SL_n(\CC)\cdot p_{\tilde{S}}}-2.\]
%In particular $\overline{\SL_n(\CC)\calo_S}$ has codimension at least two in $\overline{\SL(n)p_{\tilde{S}}}$.
\end{lemma}

\begin{proof}
Consider the map 
\[\varphi: \SL_n(\CC) \times \calb \to \calw_{\tS},\ \ \ \varphi(g,w) \mapsto g \cdot w.\]
Choose $w\in \calb$ and let $T^{i,w}(\l)$ be the corresponding 1-parameter subgroup as in Definition \ref{ivfixed}. Since $\calb \subset \calw_{\tS}$ is Borel-invariant, the fibre $\varphi^{-1}(g\cdot w)$ contains $(g(bT^{i,w}(\lambda))^{-1},(bT^{i,w}(\lambda))\cdot w)$ for $b\in B_n, \lambda\in \CC$. Since $\{T^{i,w}(\l):\l \in \CC\} \cap B_{n}=\{1\}$, 
\[\dim(\{bT^{i,w}(\lambda):b\in B_{n},\l \in \CC\}=\dim(B_{n})+1\] 
and we get    
\begin{multline}\nonumber
\dim(\im(\varphi))=\dim \SL_n(\CC)+\dim \calb-\dim(\mathrm{fibre})\le \dim \SL_n(\CC)+\dim\overline{B_n \cdot p_{\tS}}-1-(\dim(B_n)+1)=\\
=\dim \SL_n(\CC)/U_S -2=\dim \overline{\SL_n(\CC)\cdot p_{\tilde{S}}}-2.
\end{multline}

%On one hand the stabiliser $U_S \subset B_n$ of $p_{\tS}$ is a subgroup of the Borel, hence
%\[\dim \overline{\SL_n(\CC)p_{\tilde{S}}}= \dim \SL_n(\CC)-\dim B_n+\dim \overline{B_n \cdot p_{\tilde{S}}}.\] 
%On the other hand since $\calb$ is a boundary component we have
%\[\dim \calb \le \dim \overline{B_n \cdot p_{\tilde{S}}}-1,\]
%substituting these into Lemma \ref{fundamental} we get the desired inequality. 
\end{proof}

\begin{lemma}\label{lemma:ufixed} Let $\bu=\{u\}$ has one element and $r>u$.  Then $\calb_{u}^r$ is $u$-fixed and therefore by Lemma \ref{fundamentalcor}
\[\dim \overline{\SL_n(\CC) \calb_u^r} \le \dim \overline{\SL_n(\CC)p_S}-2.\]
\end{lemma}

\begin{proof}
%Lemma \ref{mustbecovered} tells us that $\calb_{\bu}$ is covered by $S$ and therefore $u\le \max_{1\le i \le n} \t_i$. 
Let $p^\infty \in \calb^r_{u}$ and $Z\in \calv(p^\infty)_{\min}$. By Proposition \ref{firstprop}
\begin{enumerate}
\item $[p^{\infty}_Z] \subset \bigcap_{V\in \calv(p^\infty)}p_V^\infty$ where $\calv(p^\infty)=\{U \in \tS: p^{\infty}_U\neq 0, \t_U \ge u\}$. 
\item $\omega(p^\infty)=r$, and hence there is a vector $w=e_r+w_{r-1}e_{r-1}+\ldots +w_1e_1 \in [p_Z^\infty]$. 
\end{enumerate}
By Lemma \ref{firstmu}
\[e_j^\infty=\lim_{m\to \infty}b^{(m)}e_j=\mu_{jj}e_j+\ldots +\mu_{1j}e_1 \text{ exists when } \t_j<u\]
and in particular, since only $b^{(m)}_{uu}$ tends to zero among the diagonal entries,  we have 
\[\mu_{jj} \neq 0 \text{ when } u<j, \t_j <u.\]
Therefore the linear base change 
\[A: \tilde{e}_j := \begin{cases} e_j^\infty & u<j, \t_j <u \\ w & j=r \\ e_j & \text{ otherwise} \end{cases} \]
sits in the Borel $B_{\SL_n}$. Since $\{1,\ldots, u\} \subset Z$, by (i) and (ii) above we have
\begin{equation}\label{pvinftysl2}
\mathrm{Span}(\tilde{e}_1,\ldots, \tilde{e}_{u},\tilde{e}_r)\subseteq [p_Z^\infty] \subseteq [p_V^\infty] \text{ for all } V\in \tS \text{ with } p_V^\infty \neq 0, \t_V\ge u.
\end{equation} 
If $\t^F_V < u$ then by Lemma \ref{firstmu} again $p_V^\infty=\wedge_{v\in V}e_v^\infty=\wedge_{v\in V}\tilde{e}_v$. But $\lim_{m\to \infty} b^{(m)}_{uu}=0$ and hence 
\[\lim_{m\to \infty} b^{(m)}e_u \subseteq \mathrm{Span}(e_1,\ldots, e_{u-1})=\mathrm{Span}(\tilde{e}_1,\ldots, \tilde{e}_{u-1})\]
and therefore
\begin{equation}\label{pvinftysl1}
[p_V^\infty] \subset \mathrm{Span}(\tilde{e}_1,\ldots, \tilde{e}_{u-1}, \tilde{e}_{u+1}, \ldots, \tilde{e}_n) \text{ for all } V\in \tS \text{ with }  \t_V < u.
\end{equation} 
From \eqref{pvinftysl1} and \eqref{pvinftysl2} follows that the one parameter subgroup 
\[\tilde{T}^{u,\tilde{e}_r}(\lambda): \tilde{e}_j \mapsto \begin{cases} \tilde{e}_j & j\neq u \\ \tilde{e}_u+\lambda \tilde{e}_r & j=u \end{cases} \text{ for } \lambda\in \CC.\]
stabilises $p^\infty$ so $T^{u,A^{-1}\tilde{e}_r}$ stabilises $p^\infty$ in the old basis, proving that it is $u$-fixed.  
\end{proof}

%\begin{rem}{On the invariant ring $k[\SL_n(k))^{U_S}$.}\label{subsec:invariantring}

% REWRITE THIS!!!
%Recall that $k$ is an algebraically closed field of characteristic zero which is a subfield of $\CC$. Without loss od generality we assume $k=\CC$. In the previous section we have constructed  an embedding 
%\[\mu_S: \SL_n(\CC)/U_S \hookrightarrow \calw_{\tS}\]
%into the affine space $\calw_{\tS}=\bigoplus_{U \in \tS} \wedge^{|U|}\CC^n$ with the property that the image has boundary components of codimension at least $2$, and therefore holomorphic functions on the image extend to the closure. This means that $\CC[\SL_n(\CC)]^{U_S}\subset \mu_S^* \CC[\calw_{\tS}]$, so the invariant ring is generated by the pull-backs of the linear coordinate functions on
%\[\calw_{\tS}=\bigoplus_{U \in \tS} \wedge^{|U|}\CC^n.\] 
%The pull-back functions from $\wedge^{|U|}\CC^n$ are the invariant minors $\det_{U}^{J}$ with $J \subset \{1,\ldots, n\},|J|=|U|$ for $U\in \tS$. This proves Theorem \ref{mainb} for $G=\SL_n(\CC)$ and more generally for $G=\SL_n(k)$.
%\end{rem}

\section{Borel-regular subgroups of classical groups}\label{sec:classical}
In this section, again, we restrict our attention to the $k=\CC$ case, but all arguments work for any algebraically closed field $k$ of characteristic zero which is a subfield of $\CC$. We will often use the shorthand notation $\Sp_n$ for $\Sp_n(\CC)$ and $\SO_n$ for $\SO_n(\CC)$.  

Recall from the Introduction that a Borel-regular subgroup of a linear algebraic group is a subgroup normalized by a Borel subgroup. They have the form $U_S$ corresponding to closed root subsets $S \subset R^+$ which are also closed under shifting by elements of $R^+$, i.e. $S+r \subset S$ for any $r\in R^+$. 

\subsection{Borel-regular subgroups of $\SL_n$} When $G=\SL_n$ this means that $(i,j)\in S \Rightarrow (i,j+1),(i-1,j)\in S$, hence Borel-regular subgroups have the form 
\begin{equation}\label{block}U^{0,0,1,2,2,3}=\begin{array}{|cccccc|}
\hline 
1 & 0 & \bullet & \bullet & \bullet & \bullet \\
  & 1 & 0 & \bullet & \bullet & \bullet \\
  &   & 1 & 0 & 0 & \bullet \\
  &   &   & 1 & 0 & 0\\
  & & & & 1 & 0 \\
  & & & & & 1\\
\hline
\end{array}
\end{equation}
where the positions of free parameters are encoded by a monotone increasing sequence $\theta=(\theta_1 \le \ldots \le \theta_n)$  satisfying $\t_i<i$. This sequence then corresponds to the root subset $S=\{S_1,\ldots, S_n\}$ where $S_i=\{1,\ldots, \t_i,i\}$. Note that a subgroup of $\SL_n$ is Borel regular if and only if it is left and right Borel regular at the same time. Therefore the Popov-Pommerening conjecture for Borel-regular subgroups of $\SL_n$ is a special case of Theorem \ref{main}.

\subsection{Borel-regular subgroups of symplectic groups}\label{subsec:symplectic}

Let $V$ be a $n=2l$-dimensional complex vector space and $Q:V \times V \to \CC$ a non-degenerate skew-symmetric bilinear form on $V$. The symplectic Lie group is then 
\[\Sp_{n}(V)=\{A \in \SL_{n}(\CC):Q(Av,Aw)=Q(v,w) \text{ for all } v,w\in V\},\]
and the corresponding symplectic Lie algebra is 
\[\spc_{n}(V)=\{A \in \slc_{n}(\CC):Q(Av,w)+Q(v,Aw)=0 \text{ for all } v,w\in V\}.\]
To get a compatible embedding of $\Sp_n(\CC)\subset \SL_n(\CC)$ with diagonal maximal torus and which preserves the standard Borel of upper triangular matrices in $\SL_n$,  we take a basis $e_1,\ldots, e_{n}$ of $V$ such that $Q$ is given by the matrix $M$ in the form $Q(v,w)=v^tMw$ where $M$ is the antidiagonal $n\times n$ matrix with two antidiagonal $l \times l$ blocks: 
\[M={\tiny \left( \begin{array}{cccccc} 
 &  & &&&1 \\
 &&& & \iddots &  \\
&&& 1 &  &  \\
&& -1 &&&\\
& \iddots &&&&\\
-1&&&&&
\end{array} \right)}\]
For a diagonal matrix $D=\mathrm{diag}(t_1,\ldots, t_n)$, the condition to lie in $\Sp_n$ is $DMD=M$. Since
\[DMD={\tiny \left( \begin{array}{ccccc} 
 &  & & & t_1t_n \\
 &  & & t_2t_{n-1} &  \\
 & & \iddots & & \\
 & -t_{n-1}t_2 &  & & \\
 -t_nt_1 & & & & 
\end{array} \right)}\]
this happens exactly when $t_1t_n=\ldots =t_lt_{l+1}=1$. Hence the maximal torus in $\Sp_n$ is 
\[T_{\Sp_n}={\tiny \left\{\left( \begin{array}{cccccc} 
 t_1 &  & & & & \\
 & \ddots  & &  & & \\
 & & t_l & & & \\
 &  &  &  t_l^{-1} & & \\
 & & & & \ddots &\\
 &&&&& t_1^{-1} 
\end{array} \right):t_1,\ldots, t_l \in \CC^*\right\}}\]
and the rank of $\Sp_{2l}$ is $l$. For $1\le i \le l$ define the character $\a_i:T_{\Sp_n} \to \CC^*$ by 
\[\a_i(\diag(t_1,t_2, \ldots, t_2^{-1},t_1^{-1}))=t_i\] and the cocharacter $\lambda_i:\CC^* \to T_{\Sp_n}$ by $\l_i(x)=\diag(1,\ldots, x,\ldots ,x^{-1},\ldots , 1)$ with $x$ at the $i$th position. Then $X^*(T_{\Sp_n})=\oplus_{i=1}^l \ZZ \a_i$ and $X_*(T_{\Sp_n})=\oplus_{i=1}^l \ZZ \l_i$ with dual pairing $\langle \a_i,\l_j \rangle=\delta_{ij}$.

For an $n \times n$ matrix $A=(a_{ij})$ let $A^{at}=(a^{at}_{ij})$ denote its antidiagonal-transpose, that is $a^{at}_{ij}=a_{n-j+1,n-i+1}$.
Computing $A^tM +MA=0$ shows that the Lie algebra $\spc_n$ consists of matrices of the form
\[\left( \begin{array}{cc} 
 A & B \\
 C & -A^{at}\\
\end{array} \right)\]
where $B=B^{at}, C=C^{at}$. 
\begin{rem}\label{remark:liealgebra}
In particular this means that any Lie algebra element $A \in \spc_n$ is uniquely determined by its entries $\{a_{ij}:i+j\le n+1\}$ sitting above and on the antidiagonal. 
\end{rem}
The Cartan subalgebra $\lieh \subset \spc_{n}$ is $l$-dimensional, spanned by the diagonal matrices $E_{ii}-E_{n+1-i,n+1-i}$ whose dual is $\a_i$. Here, as before, $E_{ii}$ is the matrix with $1$ in the diagonal entry $(i,i)$ and zero elsewhere. The roots of $\Sp_n$ are
\[R=\{\pm \a_i \pm \a_j\}_{1\le i,j \le l}\]
and the positive roots are 
\[R^+=\{\a_i-\a_j\}_{i<j}\cup \{\a_i+\a_j\}_{i\le j}.\]
Therefore the Lie algebra of the corresponding Borel subgroup consists of matrices of the form 
\[\left( \begin{array}{cc} 
 A & B \\
 0 & -A^{at}\\
\end{array} \right)\]
where $A$ is upper triangular and $B=B^{at}$. 

All one dimensional positive root subspaces $\lieg_\alpha$ have the form 
\begin{equation}\label{rootsubspace}
\lieg_\alpha=\left\{\left(\begin{array}{cccc} 
 0 &  & x &  \\
   & 0 &  & (x \text{ or } -x)\\
   & & 0 & \\
   & & & 0
\end{array} \right):x\in \CC \right\}
\end{equation}
where 
\[x \text{ sits at }  (i,j) \text{ and } x \text{ sits at } (n+1-j,n+1-i) \text{ if } \alpha=\a_i-\a_j\] 
\[x \text{ sits at } (i,j+l) \text{ and } (-x) \text{ sits at } (l+1-j,n+1-i) \text{ if } \a=\a_i+\a_j, i\neq j\]
\[x \text{ sits at }  (i,n+1-i) \text{ if } \alpha=2\a_i.\]
The corresponding root subgroups $U_\alpha=\exp(\lieg_\alpha)$ have the same form with $1$'s on the diagonal. Figure \ref{figuretwo} shows the positive root spaces and root subgroups for $n=4$. 

\begin{figure}[h]
\centering
{\small \begin{tabular}{| c | c | c |}
$\alpha$ & $\lieg_\alpha$ & $U_\alpha$ \\
\hline			
$2\alpha_1$ & $\left(\begin{array}{cccc} 
 0 & & & x \\
   & 0 & & \\
   & & 0 &\\
   & & & 0
\end{array} \right)$ & $\left(\begin{array}{cccc} 
 1 & & & x \\
   & 1 & & \\
   & & 1 &\\
   & & & 1
\end{array} \right)$ \\
  $2\a_2$ & $\left(\begin{array}{cccc} 
 0 & & &  \\
   & 0 & x & \\
   & & 0 &\\
   & & & 0
\end{array} \right)$ & $\left(\begin{array}{cccc} 
 1 & & &  \\
   & 1 & x & \\
   & & 1 &\\
   & & & 1
\end{array} \right)$ \\
  $\a_1-\a_2$  & $\left(\begin{array}{cccc} 
 0 & x & &  \\
   & 0 &  & \\
   & & 0 & -x\\
   & & & 0
\end{array} \right)$ & $\left(\begin{array}{cccc} 
 1 & x & &  \\
   & 1 &  & \\
   & & 1 & -x\\
   & & & 1
\end{array} \right)$  \\  
$\a_1+\a_2$  & $\left(\begin{array}{cccc} 
 0 &  & x &  \\
   & 0 &  & x\\
   & & 0 & \\
   & & & 0
\end{array} \right)$ & $\left(\begin{array}{cccc} 
 1 &  & x&  \\
   & 1 &  & x \\
   & & 1 & \\
   & & & 1
\end{array} \right)$ \\
\hline 
\end{tabular}}
\caption{Root subspaces and root subgroups of $\Sp_4$.}
\label{figuretwo}
\end{figure}
\begin{definition}\label{def:sisymplectic} For a closed subset $S\subset R^+$ let $U_S^{\Sp}=\langle U_{\alpha}:\alpha \in S\rangle \subset \Sp_{n}$ be the corresponding unipotent subgroup generated by the root subgroups in $\Sp_{n}$, normalized by the maximal diagonal torus in $\Sp_{n}$.
We define the family $S=\{S_1,\ldots, S_n\}$ of subsets of $\{1,\ldots, n\}$ in such way that $S_i$ collects all possible non-zero entries in the $j$th column in $U_S^{\Sp} \subset \Sp_n \subset \SL_n$, that is
\[S_j=\{i: \exists u\in U_S^{\Sp}\subset \Sp_n \subset \SL_n \text{ such that } u_{ij}\neq 0\}\]
These subsets can be described using the roots in $S$ as follows:
\begin{equation}\nonumber
S_j=\begin{cases} \{j\} \cup \{i: \a_i-\a_{j} \in S\} & \text{ for } 1\le j \le l \\
\{j\}\cup  \{i: \a_i+\a_{j-l} \text{ or } \a_{n+1-j}+\a_{l+1-i} \text{ or } \a_{n+1-j}-\a_{n+1-i} \text{ is in } S & \text{ for } l+1\le j \le 2l.
\end{cases}
\end{equation}
\end{definition}

In what follows, we will use the same notation $S$ for a set of positive roots for $\Sp_n$, for the corresponding subset family $S=\{S_1,\ldots, S_n\}$ and the corresponding set of entries 
\[S=\{(i,j) \in \{1,\ldots, n\} \times \{1,\ldots, n\}:i\in S_j\}\]
indexing all possibly nonzero entries of $U_S^{\Sp} \subset \Sp_n \subset \SL_n$.
\begin{exit} For $\Sp_4$ the closed subset $\{\a_1-\a_2,\a_1+\a_2,2\a_1\}\subset R^+$ defines the regular subgroup  
\[U_{S}^{\Sp}=\left\{\left( \begin{array}{cccc} 
1 & a & b & c \\
0 & 1 & 0 & b \\
0 & 0 & 1 & -a \\
0 & 0 & 0 & 1
\end{array} \right):a,b,c\in \CC\right\}\subset \Sp_4(\CC)\]
with the corresponding subset family    
\[S_1=\{1\},S_2=\{1,2\},S_3=\{1,3\},S_4=\{1,2,3,4\}.\]
%and 
%\[p_{\tS}=e_1 \oplus (e_1\wedge e_2) \oplus (e_1\wedge e_2 \wedge e_3 \wedge e_4) \oplus (e_1 \wedge e_4) \oplus (e_1\wedge e_2 \wedge e_4).\]
\end{exit}

\begin{lemma}\label{lemma:ts} \begin{enumerate}
\item If $U_S^{\Sp} \subset \Sp_n \subset \SL_n$ is a Borel-regular subgroup then the subset family $S=\{S_1,\ldots, S_n\}$ defines a Borel regular subgroup $U_S^{\SL}$ of $\SL_n$ such that $U_S^{\Sp}=U_S^{\SL} \cap \Sp_n$.  Equivalently, if $(i,j) \in S$ then $(i-1,j),(i,j+1) \in S$.
\item $S$ is symmetric about the antidiagonal: $(i,j)\in S \Leftrightarrow (n+1-j,n+1-i) \in S$.
\end{enumerate}
\end{lemma}

\begin{proof}
(i) follows from the fact that the embedding $\Sp_n \subset \SL_n$ preserves the Borel subgroup of upper triangular matrices in $\SL_n$. Positive root subspaces of $\SL_n$ correspond to entries above the diagonal, and every such entry defines a unique positive root subspace of $\Sp_n$.   

(ii) follows from the symmetry of the Lie algebra $\spc_n$ in $\mathfrak{sl}_n$: all root subspaces are symmetric about the antidiagonal in $\mathfrak{sl}_n$.
\end{proof}

\begin{exit} In $\Sp_4$ the subset $S=\{2\a_1,2\a_2,\a_1+\a_2\}\subset R^+$ is closed under addition of positive roots and therefore defines a Borel-regular subgroup  
\[U_{S}^{\Sp}=\left\{\left( \begin{array}{cccc} 
1 & 0 & a & b \\
0 & 1 & c & -a \\
0 & 0 & 1 & 0 \\
0 & 0 & 0 & 1
\end{array} \right):a,b,c\in \CC\right\}\subset \Sp_4(\CC).\]
The corresponding subset family is    
\[S_1=\{1\},S_2=\{2\},S_3=\{1,2,3\},S_4=\{1,2,4\}\]
which defines the Borel-regular subgroup
\[U_{S}^{\SL}=\left\{\left( \begin{array}{cccc} 
1 & 0 & a & b \\
0 & 1 & c & d \\
0 & 0 & 1 & 0 \\
0 & 0 & 0 & 1
\end{array} \right):a,b,c,d \in \CC\right\}\subset \SL_4.\] 
\end{exit}

Lemma \ref{lemma:ts} implies that $U_S^{\SL}=U^{\t} \subset \SL_n$ corresponds to some monotone increasing sequence $\t=(\t_1\le \ldots \le \t_n)$ where $S_i=\{1,\ldots, \t_i,i\}$. Due to the antidiagonal symmetry there is a unique integer $1 \le \g_S \le l$ which satisfies that 
\[(\gamma_S,n+1-\gamma_S)\in S \text{ but } (\gamma_S+1,n-\gamma_S) \notin S\]
We call $\gamma_S$ the \textbf{crossing point} of $S$ because the boundary of the region of free parameters intersect the antidiagonal at the point $(\g_S,\g_S)$, see Figure \ref{fig:funddomain} for an example. 

\begin{definition}\label{def:fundamental}
Let $S=(S_1,\ldots, S_n) \subset \{1,\ldots, n\} \times \{1,\ldots, n\}$ be a domain which is 
\begin{enumerate}
\item symmetric about the antidiagonal, that is $(i,j) \in \cals \Leftrightarrow (n+1-j,n+1-i) \in \cals$
\item Borel-regular, that is, $(i,j) \in \cals \Rightarrow (i,j+1),(i-1,j) \in \cals$. 
\end{enumerate}
We define the \textbf{symplectic fundamental domain} to be $F=\{F_1,\ldots, F_n\}$ such that 
\[F_i=\begin{cases} S_i & i\le n-\gamma_\cals \\ \{1,2,\ldots, i\} & i> n-\gamma_\cals
\end{cases}\] 

\begin{figure}[h]
\centering
\[S=\begin{array}{|cccccccc|}
\hline 
1 &  & \bullet & \bullet & \bullet & \bullet & \bullet & \bullet \\
  & 1 &  & \bullet  & \bullet & \bullet & \bullet & \bullet \\
  &   & 1 &    &  & \bullet & \bullet & \bullet \\
  &   &   & 1  &   &  & \bullet & \bullet\\
  &  &   &   & 1 & & \bullet & \bullet\\
  & & & & & 1& & \bullet\\
  & & & & & & 1& \\ 
  & & & & & & &1\\
\hline
\end{array} \to F=\begin{array}{|cccccccc|}
\hline 
1 &  & \bullet & \bullet & \bullet & \bullet & \bullet & \bullet \\
  & 1 &  & \bullet  & \bullet & \bullet & \bullet & \bullet \\
  &   & 1 &   &  & \bullet & \bullet & \bullet \\
  &   &   & 1  &   & \bullet & \bullet & \bullet\\
  &  &   &   & 1 & \bullet & \bullet & \bullet\\
  & & & &  & 1& \bullet & \bullet\\
  & & & & &  & 1& \bullet \\ 
  & & & &  & &  &1\\
\hline
\end{array}\] 
\caption{A domain $S$ symmetric about the antidiagonal with crossing point $\g_S=3$ and its symplectic fundamental domain}
\label{fig:funddomain}
\end{figure}
\end{definition} 

Note that $F$ is no longer symmetric about the anti-diagonal but it remains Borel regular and $U_F=U^{\t_F}$ corresponds to the modified sequence 
\[\t_F=\{\t_1,\ldots, \t_{n-\g_S},n-\g_S,\ldots, n-1\}.\]
See Figure \ref{fig:funddomain} for an example. 
Recall that we fixed a basis $\{e_1,\ldots, e_n\}$ of $\CC^n$ to get the embedding $\Sp_n \subset \SL_n$ of the right form. The corresponding point 
\[p_{F}=\bigoplus_{U \in F} \wedge_{i\in U} e_i \in 
\calw_{F}=\bigoplus_{U \in F} \wedge^{|U|}\CC^n.\]
has the right stabiliser in $\Sp_n$ according to the following
\begin{theorem}
The stabilizer of $p_{F}$ in $\Sp_n$ is $U_S^{\Sp}$.
\end{theorem}

\begin{proof}
Let $\mathrm{Stab}_{\Sp_n}(p_F)$ (resp. $\mathrm{Stab}_{\SL_n}(p_F)$) denote the stabiliser of $p_F$ in $\Sp_n$ (resp. $\SL_n$). Then 
\[\mathrm{Stab}_{\Sp_n}(p_F)=\mathrm{Stab}_{\SL_n}(p_F) \cap \Sp_n.\]
But according to Lemma \ref{stabiliser}, $\mathrm{Stab}_{\SL_n}(p_F)=U_F^{\SL}$ and due to the antidiagonal symmetry of $\Sp_n$,  $U_F^{\SL} \cap \Sp_n=U_S^{\Sp}$ which proves the statement.  
\end{proof}

We define the symplectic  Grosshans pair using  the corresponding ring family as before:  
\[\tF=\{F_{i_1}\cup \ldots \cup F_{i_r}:1\le i_1<\ldots <i_r\le n\}\subset 2^{\{1,\ldots ,n\}}\]
and then 
\[p_{\tF}=\bigoplus_{U \in \tF} \wedge_{i\in U} e_i \in 
\calw_{\tF}=\bigoplus_{U \in \tF} \wedge^{|U|}\CC^n.\]
has the same stabiliser in $\Sp_n$ as $p_F$.

\begin{corollary}
The stabilizer of $p_{\tF}$ in $\Sp_n$ is $U_S^{\Sp}$.
\end{corollary}

\begin{theorem}\label{prop:symplectic}
Let $n=2l$ and $F=\{F_1,\ldots, F_n\}$ be the symplectic fundamental domain corresponding to a Borel-regular subgroup $U_S^{\Sp} \subset \Sp_n$ with $\g_S=l$. Then the pair $(\calw_{\tF},p_{\tF})$ is a Grosshans pair for $U_S^{\Sp}$. This proves Theorem \ref{main2} for symplectic groups. 
\end{theorem}

\begin{proof}
$\g_S=l$ is equivalent to saying that the top right quarter  $\{(i,j): 1\le i \le l, l+1\le j \le n \}$ belongs to $F$.  In other words $F$ corresponds to a sequence 
\begin{equation}\label{thetaf}
\t^F=(\t_1\le \ldots \le \t_l, l,l+1,\ldots ,n-1).
\end{equation}
See Figure \ref{fig:fatfunddomain} for an example.

\begin{figure}[h!]
\centering
%\begin{tabular}{ccc}
\[S=\begin{array}{|cccccccc|}
\hline 
1 &  & \bullet & \bullet & \bullet & \bullet & \bullet & \bullet \\
  & 1 &  & \bullet  & \bullet & \bullet & \bullet & \bullet \\
  &   & 1 &    & \bullet & \bullet & \bullet & \bullet \\
  &   &   & 1  &  \bullet  & \bullet & \bullet & \bullet\\
  &  &   &   & 1 & & \bullet  & \bullet\\
  & & & & & 1& & \bullet\\
  & & & & & & 1& \\ 
  & & & & & & &1\\
\hline
\end{array} \to F=\begin{array}{|cccccccc|}
\hline 
1 &  & \bullet & \bullet & \bullet & \bullet & \bullet & \bullet \\
  & 1 &  & \bullet  & \bullet & \bullet & \bullet & \bullet \\
  &   & 1 &   & \bullet  & \bullet & \bullet & \bullet \\
  &   &   & 1  & \bullet  & \bullet & \bullet & \bullet\\
  &  &   &   & 1 & \bullet & \bullet & \bullet\\
  & & & &  & 1& \bullet & \bullet\\
  & & & & &  & 1& \bullet \\ 
  & & & &  & &  &1\\
\hline
\end{array}\]
%\end{tabular}
\caption{The fundamental domain of a fat Borel-regular subgroup of $\Sp_8$. Here $\t_S=(0,0,1,2,4,4,5,6)$ and $\t_F=(0,0,1,2,4,5,6,7)$.}
\label{fig:fatfunddomain}
\end{figure}

Recall that elements of the Borel $B_{\Sp_n} \subset \Sp_n$ have the form
\[\left( \begin{array}{cccc}
b_{11} & b_{12} & \cdots & b_{1n}\\
0 & b_{22} &   & \\
 & & \ddots & b_{n-1n}\\
 0 & & 0 & b_{nn}
\end{array}\right) \text{ where } b_{ii}=b_{n+1-i,n+1-i}^{-1}\]
%\left( \begin{array}{cccccc}
%b_{11} &  &  & & & \\
% & \ddots &   &  & b_{ij}& \\
% & & b_{ll} & & & \\
% & & & b_{ll}^{-1} & & \\
% & 0 & & & \ddots & \\
% & & & & & b_{11}^{-1}
%\end{array}\right),\]
and the off-diagonal entries $b_{ij}$ are not independent, but we will not use the exact form of these entries in this argument. The boundary points in $\overline{B_{\Sp_n} \cdot p_{\tF}}$ are limits of the form 
\[p^{\infty}=\lim_{m\to \infty}b^{(m)}\cdot p_{\tF}\]
where $(b^{m)})$ is a normalized sequence in the sense of Definition \ref{def:vanish}. We define the vanishing spectrum and the sets $B^{\Sp}_\bu$ exactly the same way as they are defined for $\SL_n$ in Definition \ref{def:vanish} and Definition \ref{def:bu} that is for $\bu \subset \{1,\ldots, n\}$ we let
\[\calb^\Sp_\bu=\{p^\infty \in \overline{B_{\Sp_n} \cdot p_{\tF}}: \exists \text{ norm. seq. } (b^{(m)})\subset \Sp_n \text{ s.t } p^\infty=\lim_{m\to \infty}b^{(m)} \cdot p_{\tF} \text{ and } \mathrm{VSpec}(b^{(m)})=\bu\}.\]
The embedding $\Sp_n \subset \SL_n$ implies that 
\[\calb^\Sp_\bu \subseteq \calb_\bu^{\SL}.\]

Note that the first proof of Lemma \ref{borelorbit} applies for the symplectic case and therefore 
\[\calb^\Sp_\emptyset=B_{\Sp_n} \cdot p_{\tF}.\]
This means that, again, all boundary points sit in a $B^\Sp_\bu$ with some nonempty $\bu$:
\[\overline{B_{\Sp_n} \cdot p_{\tF}}\setminus B_{\Sp_n} \cdot p_{\tF} \subset \cup_{\bu \neq \emptyset}\calb^\Sp_\bu.\]
In what follows we adapt the argument developed for left Borel regular subgroups of $\SL_n$ to Borel regular subgroups of $\Sp_n$. We start with two remarks on notations and definitions. 
\begin{enumerate}
\item  Since $\t^F$ is monotone increasing, $\theta^F_Z=\theta^F_{\max(Z)}$ holds for all $Z \subseteq \{1,\ldots, n\}$.  
\item  Since $\t^F_n=n$, every subset $\bu$ is automatically covered in the sense of Definition \ref{def:covered}.
\end{enumerate}
We have the following stronger version of Lemma \ref{mustbecovered}.
\begin{lemma}\label{lemma:u1} If $\bu=\{u_1< \ldots <u_s\} \subseteq \{1,\ldots, n\}$ is such that $u_1>l$ then $\calb^\Sp_\bu=\emptyset$.
\end{lemma}
\begin{proof} 
Let $(b^{(m)})$ be a normalized sequence with vanishing spectrum $\bu$. If $u_1\ge l+1$, or equivalently $n+1-u_1\le l$ then the relation $b_{ii}^{(m)}=(b_{n+1-i,n+1-i}^{(m)})^{-1}$ implies that 
\[\lim_{m\to \infty} b_{ii}^{(m)}\neq 0 \text{ for } 1\le i <n+1-u_1 \text{ and} \lim_{m\to \infty} |b_{n+1-u_1,n+1-u_1}^{(m)}|=\infty.\]
However, $\{1,\ldots, n+1-u_1\} \in \tF$ and 
\[p_{\{1,\ldots, n+1-u_1\}}^\infty=\lim_{m\to \infty} (\prod_{i=1}^{n+1-u_1}b^{(m)}_{ii})\cdot \bigwedge_{i=1}^{n+1-u_1} e_i\]
is not bounded, so this coordinate of $p^\infty$ does not exist,a contradiction.  
\end{proof}
Let $\bu=\{u_1<\ldots <u_s\}\subset \{1,\ldots, n\}$ be a subset with $u_1\le l$ and let 
\[p^\infty =\lim_{m\to \infty} b^{(m)} p_{\tF}=\oplus_{U\in \tilde{F}} p^\infty_{U} \in \calb^\Sp_\bu\]
 be a limit point defined by the normalized sequence $(b^{(m)}) \subset \Sp_n \subset \SL_n$ such that $\mathrm{VSpec}(b^{(m)})=\bu$. Note that $(b^{(m)})$ is a normalized sequence in the Borel $B_{\SL_n}$ of $\SL_n$ too, and therefore we can deduce the following. 
\begin{lemma}\label{firstmusymp} Lemma \ref{firstmu} and its proof remains valid when we replace $\calb_\bu$ with $\calb^\Sp_\bu$ and $\tS$ with $\tF$.
\end{lemma}
Definition \ref{def:important} works without change and Proposition \ref{firstprop} tells something about points in $\calb_\bu^{\SL}$ and since $\calb_\bu^\Sp \subset \calb_\bu^{\SL}$ these properties hold for points in $\calb_\bu^\Sp$ too. In short we have
 \begin{prop}\label{prop:remainstrue}
 Let $\bu \subset \{1,\ldots, n\}$ be a subset with $u_1\le l$. Then Propositon \ref{firstprop} holds for points in $\calb_\bu^{\Sp}$.
 \end{prop}
Using Definition \ref{def:bur} we can talk about the minimal set $\calv(p^\infty)_{\min}^{\Sp}$, the width of a boundary point $p^\infty$ and let  
\[\calb_{\bu}^{\Sp,r}=\{p^\infty \in \calb^{\Sp}_{\bu}: \omega(p^\infty)=r\}\]
denote the set of points in $\calb^{\Sp}_{\bu}$ of width $r$.
Then we have a (not necessary disjoint) finite decomposition
\[\calb^\Sp_{\bu}=\cup_{u_1 < r}\calb_{\bu}^{\Sp,r}.\] 
Remark \ref{remark:basechange} on unipotent base change remains valid if we choose our base change matrix $A$ from $B_{\Sp_n}$, and such a base change will leave $\calb_{\bu}^{\Sp,r}$ intact for all $\bu$ and $r$. As a corollary we have the following analog of Remark \ref{conditiononer}.

\begin{rem}\label{conditiononersymp} Let $p^\infty \in \calb^{\Sp,r}_\bu$ and $Z\in \calv(p^\infty)^{\Sp}_{\min}$. By definition this means that $[p^\infty_{Z}] \subset \mathrm{Span}(e_1,\ldots, e_r)$ but $[p^\infty_{Z}] \not\subset \mathrm{Span}(e_1,\ldots, e_{r-1})$ so there is a vector 
\[w=e_r+w_{r-1}e_{r-1}+\ldots +w_1e_1 \in [p^\infty_{Z}]\]
Define the Lie algebra element  
\[X^w:={\tiny \left( \begin{array}{cccccccccc}
0 &  & & w_1 & & & & & \\
   & \ddots & &  \vdots   & & & & & \\
   & & & w_{r-1} & & & & & \\
   & & & 0  & & & & & \\
   & & & & \ddots &  & & & \\
   & & & & & 0 & -w_{r-1} & \cdots & -w_1 \\
   & & & & &   &   \ddots  &   &    \\
   & & & & &   &   &   & 0 
\end{array}\right)}\in \spc_n\]
where we put $w_1,\ldots, w_{r-1}$ into the $r$th column and $-w_1,\ldots, -w_{r-1}$ into the $(n+1-r)$th row. 
$X^w$ sits in the Lie algebra of $B_{\Sp_n}$ and $\exp(X^w)$ is a unipotent element of $B_{\Sp_n}$. This defines the new basis $\bar{e}_i=\exp(X^w) \cdot e_i$. 
Let $p_{\tF}(\bar{e}_1,\ldots, \bar{e}_n)$ denote the base point $p_{\tF}$ written in the new basis. Then  
\[p^\infty=\lim_{m\to \infty} b^{(m)} p_{\tF}=\lim_{m\to \infty} (\exp(X^w)b^{(m)} \exp(-X^w)) p_{\tF}(\bar{e}_1,\ldots, \bar{e}_n).\] 
Similarly to Remark \ref{remark:basechange} we note that since $\exp(X^w)$ is unipotent, the new sequence 
\[\tilde{b}^{(m)}=\exp(X^w)b^{(m)} \exp(-X^w)\] 
has the same vanishing spectrum $\bu$ and $[p_Z^\infty]$ in this new basis has width $r$ again and therefore 
\[\calb_\bu^{\Sp,r}(e_1,\ldots e_n)=\calb_\bu^{\Sp,r}(\bar{e}_1,\ldots, \bar{e}_n) \text{ for all } \bu, r.\]
In this new basis, however,  $w=\tilde{e}_r \in [p^\infty_Z]$ holds. 
\end{rem}

The cornerstone of our argument for $\SL(n)$ was Proposition \ref{crucial} on the structure of the subsets $\calb_{\bu}^{r}$. The same proposition remains true for $\Sp_n$, but we need a careful review of the proof which was based on proper modifications of the sequence $(b^{(m)})$: the problem with the original argument is that the modified sequence sits in $\SL(n)$ but not necessarily in $\Sp_n$.
\begin{prop}\label{crucialsymplectic}
Let $s\ge 2$ and $\bu=\{u_1<\ldots <u_s\}$ be a subset with $u_1\le l$. Then    
\begin{enumerate}[(a)]
\item If $r \notin \bu$ and $\t^F_r <u_1$ then there is a continuous injection $\rho:\calb^{\Sp,r}_{\bu} \hookrightarrow \calb^{\Sp}_{\{u_2,\ldots, u_s\}\cup \{r\}}$ and therefore $\dim \calb^{\Sp,r}_{\bu} \le \dim \calb^\Sp_{\{u_2,\ldots, u_s\}\cup \{r\}}$.
\item If $r\in \bu$ then $\calb^{\Sp,r}_{\bu} \subset \overline{\calb^{\Sp,r}_{\{u_2,\ldots, u_s\}}}\setminus \calb^\Sp_{\{u_2,\ldots, u_s\}}$.  
\item If $r\notin \bu$ and $\t^F_r \ge u_1$ then $\calb^{\Sp,r}_{\bu} \subset \overline{\calb^\Sp_{\{u_2,\ldots, u_s\}}}\setminus \calb^\Sp_{\{u_2,\ldots, u_s\}}$ or $\calb^{\Sp,r}_{\bu} \subset \overline{\calb^\Sp_{\{u_2,\ldots, u_s\}\cup \{r\}}}\setminus \calb^\Sp_{\{u_2,\ldots, u_s\}\cup \{r\}}$.
\end{enumerate}
\end{prop}
 
 \begin{proof} To prove (a) assume that $r \notin \bu$ and $\t^F_r <u_1$. Let 
\[p^\infty =\lim_{m\to \infty} b^{(m)} p_{\tF}=\oplus_{U\in \tilde{F}} p^\infty_{U} \in \calb^{\Sp,r}_\bu\]
 be a limit point such that $\mathrm{VSpec}(b^{(m)})=\bu$ and $Z \in \calv(p^\infty)^{\Sp}_{\min}$. According to Remark \ref{conditiononersymp} we can assume that $e_r \in [p^\infty_{Z}]$.
 
By Proposition \ref{prop:remainstrue}, $\lim_{m\to \infty}b^{(m)}_{ii}\in \CC$ exists whenever $\t^F_i <u_1$ and since $r \notin \bu$, this limit is nonzero for $i=r$: 
\[b_{rr}^\infty:=\lim_{m\to \infty}b^{(m)}_{rr} \in \CC \setminus \{0\}.\] 
We define the entries $\hat{b}_{ij}^{(m)}$ of a modified sequence $\hat{b}^{(m)}$ for $i+j \le n+1$ as follows. 
\begin{equation}\label{modifiedsp}
\hat{b}_{ij}^{(m)}=\begin{cases} b_{rr}^\infty  & (i,j)=(u_1,u_1) \\ \frac{1}{b_{rr}^\infty}\cdot b^{(m)}_{rr}\cdot b^{(m)}_{u_1u_1} &  (i,j)=(r,r) \\ \frac{1}{b_{rr}^\infty} \cdot b_{rj}^{(m)}\cdot b_{u_1,u_1}^{(m)} & \text{ if } i=r, \t^F_j \ge u_1\text{ and } i+j\le n+1 \\ b_{ij}^{(m)} & \text{otherwise whenever } i+j \le n+1 \end{cases}
\end{equation}
In short, we fix the diagonal entry $b_{u_1u_1}^{(m)}$ to be the nonzero constant $b_{rr}^\infty$ and multiply by $\frac{1}{b_{rr}^\infty} b_{u_1u_1}^{(m)}$ the entries on and above the anti-diagonal in the $r$th row sitting in those columns which cover $u_1$. Recall from \eqref{thetaf} that for $i>l$ $\t^F_i\ge l$. Since by assumption $\t_r^F<u_1\le l$, we must have $r \le l$. Then, 
according to Remark \ref{remark:liealgebra}, there is a unique extension in $\Sp_n$ of these entries to the region below the anti-diagonal, we denote this matrix by $\hat{b}^{(m)}\in \Sp_n$. 
\begin{rem}\label{remark:modified}
For $i+j\le n+1$ the modified entries in \eqref{modifiedsl} are equal to the entries in \eqref{modifiedsp}. In particular, the first $l$ columns of $\tilde{b}^{(m)}$ and $\hat{b}^{(m)}$ are the same.  
\end{rem}

%We show that $\hat{b}^{(m)}$  has the same properties as the modified sequence $\tilde{b}^{(m)}$ defined in \eqref{modifiedsl}, namely:

%\begin{enumerate}
%\item $\hat{p}^\infty=\lim_{m\to \infty} \hat{b}^{(m)}p_{\tF}$ exists and therefore the map $\hat{\rho}: p^\infty \mapsto \hat{p}^\infty$ is well-defined. 
%\item $\hat{p}^{\infty} \in \calb^{\Sp}_{\{u_2,\ldots, u_s\}\cup \{r\}}$
%\item $\hat{\rho}: p^\infty \mapsto \hat{p}^\infty$ is injective. 
%\end{enumerate}

Let $\tilde{p}^\infty=\lim_{m\to \infty} \tilde{b}^{(m)}p_{\tF}$ be the limit point defined by the modified sequence \eqref{modifiedsl}. We show that $\tilde{p}^\infty=\hat{p}^\infty$ and hence $\tilde{\rho}=\hat{\rho}$ on $\calb_\bu^{\Sp,r} \subset \calb_\bu^r$ and part (a) follows from Proposition \ref{firstprop} (a). 

Let $V \in \tF$. Due to Remark \ref{remark:modified} 
\begin{equation}\label{firstl}
\hat{p}_V^\infty=\tilde{p}_V^\infty \text{ holds whenever } \max(V)\le l.
\end{equation}
If $\max(V)= v \ge l+1$ then in fact $V=\{1,\ldots, v\}$ and therefore using the equality $\hat{b}_{ii}^{(m)}=(\hat{b}_{n+1-i,n+1-i}^{(m)})^{-1}$ we get
\begin{equation}\label{hatinfty}
\hat{p}_V^\infty=\lim_{m\to \infty} \prod_{i=1}^v \hat{b}_{ii}^{(m)}\cdot \bigwedge_{i=1}^v e_i=\lim_{m\to \infty} \prod_{i=1}^{n-v} \hat{b}_{ii}^{(m)}\cdot \bigwedge_{i=1}^v e_i=
\hat{p}^\infty_{\{1,\ldots, n-v\}} \wedge \bigwedge_{i=n+1-v}^v e_i.
\end{equation}
Similarly, 
\begin{equation}\label{tildeinfty}
\tilde{p}_V^\infty=\lim_{m\to \infty} \prod_{i=1}^v \tilde{b}_{ii}^{(m)}\cdot \bigwedge_{i=1}^v e_i=\lim_{m\to \infty} \prod_{i=1}^{n-v} \tilde{b}_{ii}^{(m)}\cdot \bigwedge_{i=1}^v e_i=
\tilde{p}^\infty_{\{1,\ldots, n-v\}} \wedge \bigwedge_{i=n+1-v}^v e_i.
\end{equation}
However, by \eqref{firstl} $\tilde{p}^\infty_{\{1,\ldots, n-v\}}=\hat{p}^\infty_{\{1,\ldots, n-v\}}$ and hence $\tilde{p}_V^\infty=\hat{p}_V^\infty$. So $\tilde{p}^\infty=\hat{p}^\infty$ and part (a) is proved.

To prove (b) and (c) we define for $\delta \neq 0$ the modified sequence 
\begin{equation}\label{modifiedspdelta}
\hat{b}_{ij}^{(m),\delta}=\begin{cases} \delta  & (i,j)=(u_1,u_1) \\ \frac{1}{\delta}b^{(m)}_{rr}\cdot b^{(m)}_{u_1u_1} & \text{ if }(i,j)=(r,r) \text{ and } r \le l \\ \frac{1}{\delta}b_{rj}^{(m)}\cdot b_{u_1,u_1}^{(m)} & \text{ if } i=r, j\ge r,  \t^F_j \ge u_1 \text{ and } i+j\le n+1\\ b_{ij}^{(m)} & \text{otherwise whenever } i+j\le n+1\end{cases}
\end{equation}
and its unique extension $\hat{b}^{(m),\delta} \in \Sp_n$. 
\begin{rem} If $r\ge l+1$ then the second and third line in \eqref{modifiedsp} are irrelevant and $\hat{b}^{(m)}$ differs from $b^{(m)}$ only in the $(u_1,u_1)$ and $(n+1-u_1,n+1-u_1)$ diagonal entries. 
\end{rem}
\begin{rem}\label{remark:modified2}
For $i+j\le n+1$ the modified entries in \eqref{modifiedsldelta} are equal to the entries in \eqref{modifiedspdelta}. In particular, the first $l$ columns of $\tilde{b}^{(m),\delta}$ and $\hat{b}^{(m),\delta}$ are the same.  
\end{rem}
If $r\in \bu$ then $\mathrm{VSpec}(\hat{b}^{(m),\delta})=\{u_2,\ldots, u_s\}$. If $r\notin \bu$ and $\t_r^F \ge u_1$ then 
\[\mathrm{VSpec}(\hat{b}^{(m),\delta})=\begin{cases} \{u_2,\ldots, u_s\}& \text{ if } \lim_{m\to \infty}b^{(m)}_{rr}b^{(m)}_{u_1u_1} \neq 0 \\  \{u_2,\ldots, u_s\}\cup \{r\} & \text{ if } \lim_{m\to \infty}b^{(m)}_{rr}b^{(m)}_{u_1u_1} = 0 \end{cases}.\]
Let $\tilde{p}^{\infty,\delta}=\lim_{m\to \infty} \tilde{b}^{(m),\delta}p_{\tF}$ be the limit point defined by the modified sequence \eqref{modifiedsldelta}. The same argument as above shows again that 
\[\hat{p}^{\infty,\delta}=\tilde{p}^{\infty,\delta} \text{ for all } \delta>0\]
and hence by \eqref{tends} we have 
\[\lim_{\delta \to 0} \hat{p}^{\infty,\delta}=\lim_{\delta \to 0} \tilde{p}^{\infty,\delta}=p^\infty.\] 
Therefore 
\[p^{\infty} \in \begin{cases} \overline{\calb^\Sp_{\{u_2,\ldots, u_s\}}} & \text{ if } r\in \bu \\ \overline{\calb^\Sp_{\{u_2,\ldots, u_s\}}} \text{ or } \overline{\calb^\Sp_{\{u_2,\ldots, u_s\} \cup \{r\}}} & \text{ if } r \notin \bu \text{ and } \t^F_r \ge u_1\end{cases} . \]
But $p^\infty \in \calb^\Sp_{\{u_1,\ldots, u_s\}}$ so $p^{\infty}\notin \calb^\Sp_{\{u_2,\ldots, u_s\}}$ and $p^{\infty}\notin \calb^\Sp_{\{u_2,\ldots, u_s\}\cup \{r\}}$ and we are done.   
\end{proof}

Proposition \ref{crucialsymplectic} reduces the proof of Theorem \ref{prop:symplectic} the same way as in the $\SL_n$ case to the simple situation when $\bu=\{u\}$ has a single element. Before we start studying this special case we state the following analog of Lemma \ref{fundamentalcor}.

\begin{lemma}\label{offborelsymplectic}
Let $\a \in R^-$ be a negative root. Let $\calb \subset \overline{B_{\Sp_n} \cdot p_{\tF}}$ be a $B_{\Sp_n}$-invariant subvariety. Assume that for every point $w\in \calb$ there is an element $b_w \in B_{\Sp}$ such that $w$ is fixed by the conjugate $b_w U_\a b_w^{-1}$ of the root subgroup $U_\a \subset \Sp_n$. Then 
\[\dim \overline{\Sp_n \cdot \calb} \le \dim \overline{\Sp_n \cdot p_{\tilde{F}}}-2.\]
%In particular $\overline{\SL_n(\CC)\calo_S}$ has codimension at least two in $\overline{\SL(n)p_{\tilde{S}}}$.
\end{lemma}

\begin{proof}
We can simply copy the proof of Lemma \ref{fundamentalcor}. Consider the map 
\[\varphi: \Sp_n \times \calb \to \calw_{\tF},\ \ \ \varphi(g,w) \mapsto g \cdot w.\]
Choose $w\in \calb$ and let $T^w=b_w U_\a b_w^{-1}$ be the corresponding subgroup which fixes $w$. Since $\calb \subset \calw_{\tS}$ is Borel-invariant, the fibre $\varphi^{-1}(g\cdot w)$ contains $(g(bu)^{-1},(bu)\cdot w)$ for $b\in B_{\Sp_n}, u\in b_w U_\a b_w^{-1}$. Since $b_w U_\a b_w^{-1} \cap B_{\Sp_n}=\{1\}$, 
\[\dim(\{bu:b\in B_{\Sp_n},u\in b_w U_\a b_w^{-1}\}=\dim(B_{\Sp_n})+1\] 
and we get   
\begin{multline}\nonumber
\dim(\im(\varphi))=\dim \Sp_n+\dim \calb-\dim(\mathrm{fibre})\le \dim \Sp_n+\dim\overline{B_{\Sp_n} \cdot p_{\tF}}-1-(\dim(B_{\Sp_n})+1)=\\
=\dim \Sp_n/U_S^{\Sp}-2=\dim \overline{\Sp_n\cdot p_{\tilde{F}}}-2.
\end{multline}
\end{proof}

First we study the sets $\calb_u^{\Sp,r}$ with $r\le l$. 

\begin{lemma}\label{firstlemmasymplectic}
Let $\bu=\{u\}$ and $r$ be an integer such that $u <r\le l$. 
Let $\a=\a_{r}-\a_{u}$ so that the corresponding root subgroup has two nonzero off-diagonal entries as in \eqref{rootsubspace} where $x$ sits in the $(r,u)$ and $-x$ in the $(n+1-u,n+1-r)$ entry. Then every point in $\calb^{\Sp,r}_{\bu}$ is fixed by a  conjugate $\hat{A} U_\alpha \hat{A}^{-1}$ for some $\hat{A} \in B_{\Sp_n}$.
\end{lemma} 

\begin{proof} 
Let $p^\infty =\lim_{m\to \infty}b^{(m)}p_{\tilde{Z}} \in \calb^{\Sp,r}_{\bu}$. We define the matrix $A\in B_{\SL_n}$ and the new matrix $\{\tilde{e}_1,\ldots, \tilde{e}_n\}$ satisfying \eqref{pvinftysl2} and \eqref{pvinftysl1} just as in the proof of Lemma \ref{lemma:ufixed}. Since $u<r\le l$, the matrix $A$ has nonzero off-diagonal entries only in the first $l$ column and by Remark \ref{remark:liealgebra}, it has a unique extension $\hat{A}  \in \Sp_n$ whose entries above the anti-diagonal are equal to those entries of $A$. 

We claim that $p^\infty$ is fixed by $\hat{A} U_\a \hat{A}^{-1}$. Equivalently, $p^\infty$ is fixed by $U_\a$ when written in the basis $\{\tilde{e}_1,\ldots, \tilde{e}_n\}$. Since 
$U_\a \subset \Sp_n$ has nonzero off-diagonal entries only at $(r,u)$ and $(n+1-u,n+1-r)$ this follows if we prove that $p^\infty$ is fixed by both $T^{u,\tilde{e}_r}$ and $T^{n+1-r,\tilde{e}_{n+1-u}}$. The first follows from  
\eqref{pvinftysl2} and \eqref{pvinftysl1}. 

To see that $p^\infty$ is fixed by $T^{n+1-r,\tilde{e}_{n+1-u}}$ note that $n+1-r$ and $n+1-u$ are both bigger than $l$ and therefore $T^{n+1-r,\tilde{e}_{n+1-u}}$ fixes $p^\infty_V$ automatically when $\max(V)\le l$. 

When $\max(V)= v \ge l+1$ then in fact $V=\{1,\ldots, v\}$ and therefore using the equality $b_{ii}^{(m)}=b_{n+1-i,n+1-i}^{(m)})^{-1}$ and Remark \ref{remark:modified} we get
\begin{equation}\label{maxvatleastl+1}
p_{\{1,\ldots, v\}}^\infty=\lim_{m\to \infty} \prod_{i=1}^v b_{ii}^{(m)}\cdot \bigwedge_{i=1}^v e_i=\lim_{m\to \infty} \prod_{i=1}^{n-v} b_{ii}^{(m)}\cdot \bigwedge_{i=1}^v e_i
\end{equation}
Now we prove that 
\begin{equation}\label{betweenuandr}
\prod_{i=1}^{t} b_{ii}^{(m)}=0  \text{ whenever } u \le t <r.
\end{equation}
By Proposition \ref{prop:remainstrue} $\lim_{m\to \infty} b_{ii}^{(m)} \in \CC$ exists when $\t_i^F <u$ and since $\t_{u}^F<u$ we have
\[\lim_{m\to \infty} \prod_{i=1}^t b_{ii}^{(m)}=0 \text{ if } u\le t \text{ and } \t_t^F<u.\]
Now let $t<r$ such that $\t_t^F\ge u$ and take $V=\{1,\ldots, t\} \subset \tF$. If $p_V^\infty \neq 0$ then by definition $V\in \calv(p^\infty)^{\Sp}$ and therefore by Proposition \ref{prop:remainstrue}
\[[p_Z^{\infty}] \subset [p_V^\infty]=[e_1\wedge \ldots \wedge e_v]\]
which is a contradiction because $[p_Z^{\infty}]$ has width $r>t$. Therefore $p_V^\infty=0$, that is, 
\[\lim_{m\to \infty} \prod_{i=1}^t b_{ii}^{(m)}=0 \text{ if } t<r  \text{ and } \t_t^F\ge u.\]
Putting these together we get \eqref{betweenuandr}.
Then  
\begin{equation}\label{bigv}
p_{\{1,\ldots, v\}}^\infty=0 \text{ for } n-r<v\le n-u.
\end{equation}
This means that 
\begin{equation}\label{finalconclusion} 
\text{ if } p_{\{1,\ldots, v\}}^\infty \neq 0 \text{ then either: } n+1-r \notin \{1,\ldots, v\} \text{ or: } n+1-r \text{ and } n+1-u \text{ are both in } \{1,\ldots, v\}.
\end{equation}
In both cases $p_{\{1,\ldots, v\}}^\infty$ is fixed by $T^{n+1-r,\tilde{e}_{n+1-u}}$.
\end{proof}

Finally we study the sets $\calb_u^{\Sp,r}$ with $r>l$.

\begin{lemma}\label{secondlemmasymplectic} Let $\bu=\{u\}$ and $r$ be an integer such that $u\le l<r$. 
Let 
\[\a=\begin{cases} \a_{u+1}-\a_{u} & u\le l-1\\ -2\a_l & u=l \end{cases}.\] 
The corresponding negative root subgroup $U_{\a_{u+1}-\a_{u}}$ has nonzero off-diagonal entries as in \eqref{rootsubspace} where $x$ sits at $(u+1,u)$ and $-x$ sits at $(n+1-u,n-u)$, whereas $U_{-2\a_l}$ has $x$ at $(l+1,l)$. Then every point in $\calb^{\Sp,r}_{\bu}$ is fixed by a  conjugate $\hat{A}U_\alpha \hat{A}^{-1}$ for some $\hat{A} \in B_{\Sp_n}$.

\end{lemma}

\begin{proof}  Let $p^\infty \in \calb^{\Sp,r}_{u}$ and $Z\in \calv(p^\infty)^{\Sp}_{\min}$. By Proposition \ref{prop:remainstrue}
\begin{enumerate}
\item $[p^{\infty}_Z] \subset \bigcap_{V\in \calv(p^\infty)^{\Sp}}p_V^\infty$ where  $\calv(p^\infty)^{\Sp}=\{U \in \tF: p^{\infty}_U\neq 0, \t_U \ge u\}$. 
\item $\omega(p^\infty)=r$, and in particular if $\max(V)<r$ then $p_V^\infty \subset \mathrm{Span}(e_1,\ldots, e_r)$ and therefore $[p_Z^\infty]$ cannot sit in $[p_V^{\infty}]$. Therefore by (i) 
\begin{equation}\label{pvinfty1}
\text{ If } \t^F_V \ge u \text{ and } \max(V)<r \text{ then } p_V^\infty =0.
\end{equation}
\end{enumerate}

By Lemma \ref{firstmusymp} 
\[e_j^\infty=\lim_{m\to \infty}b^{(m)}e_j=\mu_{jj}e_j+\ldots +\mu_{1j}e_1 \text{ exists when } \t^F_j<u\]
and in particular, since only $b^{(m)}_{uu}$ tends to zero among the diagonal entries,  we have 
\[\mu_{jj} \neq 0 \text{ when } u<j\le l, \t^F_j <u.\]
Thus we can define the new basis elements $\{\tilde{e}_1,\ldots, \tilde{e}_l\}$ as follows 
\[A: \tilde{e}_j := \begin{cases} e_j^\infty & u<j\le l, \t^F_j <u \\  e_j & \text{ otherwise whenever } j\le l\end{cases} \]
According to Remark \ref{remark:liealgebra}, this can be extended to a linear base change $\hat{A} \in \Sp_n$ to get a new basis $\{\tilde{e}_1,\ldots, \tilde{e}_n\}$. 

If $\t^F_V < u$ then by Lemma \ref{firstmusymp} again $p_V^\infty=\wedge_{v\in V}e_v^\infty=\wedge_{v\in V}\tilde{e}_v$ and $\lim_{m\to \infty} b^{(m)}_{uu}=0$ implies that   
\[e_u^{\infty} \subseteq \mathrm{Span}(e_1,\ldots, e_{u-1})=\mathrm{Span}(\tilde{e}_1,\ldots, \tilde{e}_{u-1})\]
Therefore
%it gives the new basis 
%\[\{\tilde{e}_1,\ldots \tilde{e}_n\}:=\{e_1,\ldots, e_u, e_{u+1}^\infty,\ldots ,e_{m(u)-1}^\infty, e_{m(u)}, \ldots, e_n\}\] 
%such that 
\begin{equation}\nonumber
\text{ If } \t^F_V < u \text{ then } [p_V^\infty] \subset \mathrm{Span}(\tilde{e}_1,\ldots, \tilde{e}_{u-1}, \tilde{e}_{u+1}, \ldots, \tilde{e}_n).  .
\end{equation} 
Together with \eqref{pvinfty1} this means that 
\begin{equation}\label{pvinfty2}
\text{ if } p_V^\infty \neq 0 \text{ and } \max(V) < r \text{ then } [p_V^\infty] \subset \mathrm{Span}(\tilde{e}_1,\ldots, \tilde{e}_{u-1}, \tilde{e}_{u+1}, \ldots, \tilde{e}_n).
\end{equation}
If $V\in \tF$ such that $\max(V)=v\ge l+1$ then $V=\{1,\ldots, v\}$ and according to \eqref{bigv} we have $p_{\{1,\ldots, v\}}^\infty=0$ for $n-r<v\le n-u$. Since $u\le l < r$, this means in particular that
\begin{equation}\label{pvinfty3}
p_{\{1,\ldots, v\}}^\infty=0 \text{ for } l+1\le v\le n-u
\end{equation}
Now \eqref{pvinfty2} and \eqref{pvinfty3} implies that 
\[\text{ if } p_V^\infty \neq 0 \text{ then } [p_V^\infty] \subset \mathrm{Span}(\tilde{e}_1,\ldots, \tilde{e}_{u-1}, \tilde{e}_{u+1}, \ldots, \tilde{e}_{n-u-1},\tilde{e}_{n-u+1},\ldots, \tilde{e}_n).\]
Thus $U_{\a_{u+1}-\a_u}$ stabilises $p^\infty$ so $\hat{A}U_{\a_{u+1}-\a_u}\hat{A}^{-1}$ stabilises $p^\infty$ in the old basis.  
\end{proof}

We finished the proof of Theorem \ref{prop:symplectic} and hence Theorem \ref{main2} is proved. 
\end{proof}

\subsection{Borel-regular subgroups of orthogonal groups}\label{subsec:orthogonal}

Let $V$ be an $n$-dimensional complex vector space and $Q:V \times V \to \CC$ a non-degenerate, symmetric bilinear form on $V$. The orthogonal Lie group is then 
\[\SO_{n}(V)=\{A \in \SL_{n}(\CC):Q(Av,Aw)=Q(v,w) \text{ for all } v,w\in V\},\]
and the corresponding symplectic Lie algebra is 
\[\so_{n}(V)=\{A \in \slc_{n}(\CC):Q(Av,w)+Q(v,Aw)=0 \text{ for all } v,w\in V\}.\]
%The non-degenerate $Q$ gives us an identification of $V$ and $V^*$, and $\so_{n}(V)$ is the Lie algebra of skew-symmetric endomophisms of $V$, that is,
%\[\so_{n}(V)\simeq \wedge^2 V\] 
%as representations of $\so_{n}(\CC)$.
To get a compatible embedding of $\SO_n(\CC)\subset \SL_n(\CC)$ with diagonal maximal torus, we take a basis $\{e_1,\ldots, e_{n}\}$ of $V$ such that $Q$ is given by the matrix $M$ in the form $Q(v,w)=v^tMw$, and we choose $M$ to be the antidiagonal matrix (with $n=2l$ or $n=2l+1$) 
\[M=\left( \begin{array}{ccc} 
 &  & 1  \\
 & \iddots &  \\
1 &  &   \\
\end{array} \right)\]
For $n=2l$ $\SO_n$ has the same maximal torus as $\Sp_n$ and for $n=2l+1$ the maximal torus consist of diagonal matrices $\mathrm{diag}(t_1,\ldots, t_l,1,t_l^{-1},\ldots, t_1)$. The Lie algebra $\so_n$ consists of matrices of the form
\[\left( \begin{array}{cc} 
 A & B \\
 C & -A^{at}\\
\end{array} \right) \text{ for } n=2l \text{ and } \left( \begin{array}{ccc} 
 A & \bv & B \\
 \bw & 0 & -\bv \\
  C & -\bw & -A^{at}\\
\end{array} \right) \text{ for } n=2l+1\]
where $B=-B^{at}, C=-C^{at}$ and $\bv,\bw$ are arrays of length $l$. In particular, the antidiagonal entries are all $0$ in $\so_n$. 
\begin{rem}\label{remark:liealgebraorthogonal}
In particular this means again that any Lie algebra element $A \in \so_n$ is uniquely determined by its entries $\{a_{ij}:i+j< n+1\}$ sitting above the antidiagonal. 
\end{rem}

The characters and cocharacters are the same as in $\Sp_n$ and the Cartan subalgebra $\lieh \subset \so_{n}$ is $l$-dimensional spanned by the diagonal matrices $E_{ii}-E_{l+i,l+i}$ for $1\le i \le l$ whose dual is $\a_i$. The roots are
\begin{equation}\nonumber
\{\pm \a_i \pm \a_j\}_{i<j} \text{ for } n=2l \text{ and } \{\pm \a_i\pm \a_j\}_{i<j} \cup \{\pm \a_i\} \text{ for } n=2l+1
\end{equation}
The positive roots are 
\begin{eqnarray}
n=2l:& R^+=\{\a_i-\a_j\}_{i<j}\cup \{\a_i+\a_j\}_{i<j} \notag \\
n=2l+1:& R^+=\{\a_i-\a_j\}_{i<j}\cup \{\a_i+\a_j\}_{i<j} \cup \{\a_i\} \notag
\end{eqnarray}
The root vectors corresponding to the positive roots have two nonzero entries symmetric about the anti-diagonal as in $\Sp_n$ but here
\[x \text{ sits at }  (i,j) \text{ and } (-x) \text{ sits at } (n+1-j,n+1-i) \text{ if } \alpha=\a_i-\a_j\] 
\[x \text{ sits at } (i,j+l) \text{ and } (-x) \text{ sits at } (l+1-j,n+1-i) \text{ if } \a=\a_i+\a_j, i\neq j\]
\[x \text{ sits at }  (i,l+1) \text{ and } (-x) \text{ sits at } (l+1,n+1-i) \text{ if } \a=\a_i.\]
For a closed subset $S\subset R^+$ let $U_S^{\SO}=\langle U_{\alpha}:\alpha \in S\rangle \subset \SO_{n}$ be the corresponding unipotent subgroup generated by the root subgroups in $\SO_{n}$, normalized by the maximal diagonal torus in $\SO_{n}$.
We define the family $S=\{S_1,\ldots, S_n\}$ of subsets in the same way as for $\Sp_n$, that is, $S_i$ collects the possible non-zero entries in the $j$th column in $U_S^{\Sp} \subset \SO_n \subset \SL_n$:
\[S_j=\{i: \exists u\in U_S^{\SO}\subset \SO_n \subset \SL_n \text{ such that } u_{ij}\neq 0\}\]
%These subsets can be described using the roots in $S$ as follows. For $n=2l$
%\begin{equation}\nonumber
%S_j=\begin{cases} \{j\} \cup \{i: \a_i-\a_{j} \in S\} & \text{ for } 1\le j \le l \\
%\{j \cup n+1-j\}\cup  \{i: \a_i+\a_{j-l} \text{ or } \a_{n+1-j}+\a_{l+1-i} \text{ or } \a_{n+1-j}-\a_{n+1-i} \text{ is in } S & \text{ for } l+1\le j \le 2l.
%\end{cases}
%\end{equation}
%For $n=2l+1$
%\begin{equation}\nonumber
%S_j=\begin{cases} \{j\} \cup \{i: \a_i-\a_{j} \in S\} & \text{ for } 1\le j \le l \\
%\{j\}\cup  \{i: \a_i+\a_{j-l} \text{ or } \a_{n+1-j}+\a_{l+1-i} \text{ or } \a_{n+1-j}-\a_{n+1-i} \text{ is in } S & \text{ for } l+1\le j \le 2l.
%\end{cases}
%\end{equation}

%\begin{eqnarray}
%S_j=\{j\}\cup \{i: \a_i-\a_j \in S\} & \text{ for } 1\le j \le l \notag\\
%S_j=\{j,j-n\}\cup \{i: \a_i+\a_{j-n} \in S\}\cup \{i: \a_{j-n}-\a_{i-n} \in S\} & \text{ for } l+1\le j \le 2l \label{defsj}\\
%S_{2l+1}=\{i:\a_i \in S\} \notag
%\end{eqnarray}
%Now for $n=2l,2l+1$ we define the point
%\[p_S=\bigoplus_{j=l+1}^{n} \wedge_{i\in S_j} e_i \in 
%\CC^S=\bigoplus_{j=l+1}^{n} \wedge^{|S_j|}\CC^{n}.\]
\begin{exit}\label{exaorth}	
If $n=4$ and $S=\{\a_1-\a_2,\a_1+\a_2\}$ then the corresponding subgroup $U_S\subset \SO_4(\CC)$ is the maximal unipotent radical of the full upper Borel in $\SO_4(\CC)$, that is
\[U_S=\left\{\left( \begin{array}{cccc} 
1 & a & b & -ab \\
0 & 1 & 0 & -b \\
0 & 0 & 1 & -a \\
0 & 0 & 0  & 1
\end{array} \right):a,b\in \CC\right\}.\] 
Then the nonzero entries of $U_S$ define the sets  
\[S_1=\{1\},S_2=\{1,2\},S_3=\{1,3\},S_4=\{1,2,3,4\}\]
\end{exit}

If $U_S^{\SO} \subset \SO_n$ is Borel-regular then the regular subgroup $U_S^{\SL}$ is Borel regular in $\SL_n$, symmetric about the anti-diagonal. Therefore we can define the crossing point $\g_S$ of $S$ like in the symplectic case. 
\begin{rem}
Note that in $\SO_n$ with $n=2l$ the entry $(l,l+1)$ is always zero and therefore $(l,l+1) \notin S$. This implies that $\g_S\le l-1$. 
\end{rem}
We define the orthogonal fundamental domain $F$ corresponding to a Borel-regular subset $S$ symmetric about the anti-diagonal the same way as in Definition \ref{def:fundamental}. The corresponding point $p_F$ and therefore $p_{\tF}$ has stabiliser $U_S$ in $\SO_n$.

\begin{definition} We define the snipped top right quarter of $\SL_n$ as the domain 
\[\mathbf{Q}=\begin{cases} \{(i,j):1\le i \le l, l+1 \le j \le n\}\setminus \{(l,l+1)\} & \text{ for } n=2l \\ 
\{(i,j):1\le i \le l+1, l+1 \le j \le n\}\setminus \{(l+1,l+1)\} & \text{ for } n=2l+1\end{cases}.\]
We call a Borel regular subgroup $U_S^\SO \subset \SO_n$ \textbf{fat Borel regular} if $\{\a_i+\a_j:1\le i<j\le l\}\subset S$ for $n=2l$ and $\{\a_i+\a_j,\a_i:1\le i<j\le l\}\subset S$ for $n=2l+1$.  
Equivalently, the snipped top right quarter $\mathbf{Q}$ is part of the free parameter domain of the corresponding $U_S^\SL \subset \SL_n$. 
\end{definition}
\begin{figure}[h]
\centering
\begin{tabular}{ccc}
$S=\begin{array}{|cccccccc|}
\hline 
1 &  & \bullet & \bullet & \bullet & \bullet & \bullet & \bullet \\
  & 1 &  &  & \bullet & \bullet & \bullet & \bullet \\
  &   & 1 &    & \bullet & \bullet & \bullet & \bullet \\
  &   &   & 1  &    & \bullet & \bullet & \bullet\\
  &  &   &   & 1 & &  & \bullet\\
  & & & & & 1& & \bullet\\
  & & & & & & 1& \\ 
  & & & & & & &1\\
\hline
\end{array}$ & $\to$ & $F=\begin{array}{|cccccccc|}
\hline 
1 &  & \bullet & \bullet & \bullet & \bullet & \bullet & \bullet \\
  & 1 &  & \bullet  & \bullet & \bullet & \bullet & \bullet \\
  &   & 1 &   & \bullet  & \bullet & \bullet & \bullet \\
  &   &   & 1  &   & \bullet & \bullet & \bullet\\
  &  &   &   & 1 & \bullet & \bullet & \bullet\\
  & & & &  & 1& \bullet & \bullet\\
  & & & & &  & 1& \bullet \\ 
  & & & &  & &  &1\\
\hline
\end{array}$ 
\end{tabular}
\caption{A fat Borel-regular subgroup for $n=8$ and its fundamental domain. Note that $(4,5)$ is missing from the free parameter domain.}
\end{figure}
\begin{figure}[h]
\centering
\begin{tabular}{ccc}
$S=\begin{array}{|ccccccc|}
\hline 
1 &  & \bullet & \bullet & \bullet & \bullet & \bullet  \\
  & 1 &  & \bullet & \bullet & \bullet & \bullet  \\
  &   & 1 &  \bullet  & \bullet & \bullet & \bullet \\
  &   &   & 1  & \bullet   & \bullet & \bullet \\
  &  &   &   & 1 & & \bullet \\
  & & & & & 1& \\
  & & & & & & 1\\ 
\hline
\end{array}$ & $\to$ & $F=\begin{array}{|ccccccc|}
\hline 
1 &  & \bullet & \bullet & \bullet & \bullet & \bullet  \\
  & 1 & & \bullet  & \bullet & \bullet & \bullet  \\
  &   & 1 & \bullet  & \bullet  & \bullet & \bullet \\
  &   &   & 1  & \bullet  & \bullet & \bullet \\
  &  &   &   & 1 & \bullet & \bullet \\
  & & & &  & 1& \bullet \\
  & & & & &  & 1 \\ 
\hline
\end{array}$ 
\end{tabular}
\caption{A fat Borel-regular subgroup for $n=7$ and its fundamental domain.}
\end{figure}

\begin{theorem}\label{prop:orthogonal}
Let $n=2l$ or $n=2l+1$ and $F=\{F_1,\ldots, F_n\}$ be the orthogonal fundamental domain corresponding to a fat Borel-regular subgroup $U_S \subset \SO_n$. Then the pair $(\calw_{\tF},p_{\tF})$ is a Grosshans pair for $U_F$. This proves Theorem \ref{main2} for orthogonal groups. 
\end{theorem}
  
\begin{proof}
First we assume $n=2l$. The key observation is the following stronger version of Lemma \ref{lemma:u1}.
\begin{lemma}\label{lemma:u1orthogonal} Let $n=2l$. If $\bu=\{u_1< \ldots <u_s\}$ is such that $u_1\ge l$ then $\calb^\SO_\bu=\emptyset$.
\end{lemma}
\begin{proof} The $u_1>l$ case is the same as in Lemma \ref{lemma:u1}. Assume $u_1=l$. Then 
\[\lim_{m\to \infty} b^{(m)}_{l+1l+1}=\lim_{m\to \infty} (b^{(m)}_{ll})^{-1}=\infty \text{ and } \lim_{m\to \infty} b^{(m)}_{ii}\in \CC \setminus \{0\} \text{ for } 1\le i <l.\]
Since $S$ is flat, $F_{l+1}=\{1,\ldots, l-1,l+1\}$ and the coefficient of $e_1\wedge \ldots \wedge e_{l-1} \wedge e_{l+1}$ in $p^{\infty}_{F_{l+1}}$ is  
\[p_{S_{l+1}}^\infty[e_1\wedge \ldots \wedge e_{l-1} \wedge e_{l+1}]=\lim_{m\to \infty} b^{(m)}_{l+1l+1} \cdot   \prod_{i=1}^{l-1} b^{(m)}_{ii}=\infty \]
a contradiction.
\end{proof}
The proof of Theorem \ref{prop:symplectic} applies with two minor changes for the proof of Theorem \ref{prop:orthogonal}. The only difference we have to keep in mind is that for $\SO_{2l}$ $F^{\SO}_{l+1}=\{1,\ldots, l-1,l+1\}$ whereas in $Sp_{2l}$ it was $F^{\Sp}_{l+1}=\{1,\ldots, l+1\}$. This means that $F^{\SO}=F^{\Sp} \cup F^{\SO}_{l+1}$ and this extra set results minor changes in the proof of Lemma \ref{firstlemmasymplectic} and Lemma \ref{secondlemmasymplectic} as follow.
\begin{itemize}
\item In the proof of Lemma \ref{firstlemmasymplectic} the first part proving that $T^{u,e_r}$ fixes $p^\infty$ remains the same. To prove that $T^{n+1-r,e_{n+1-u}}$ fixes $p^\infty$ we only need to worry about those $p_V^\infty$'s where $\max(V)\ge l+1$. In order to prove \eqref{finalconclusion} we distinguish two cases: 

(a) If $r\le l-1$ then $n+1-r\ge l+2$. However, for $\max(V)=v\ge l+2$ we have $V=\{1,\ldots, v\}$ so \eqref{maxvatleastl+1} holds and therefore \eqref{betweenuandr} implies that $p_{\{1,\ldots, v\}}^\infty =0$ for $n+1-r \le v \le n-u$ again.

(b) If $r=l$ then either $p_{\{1,\ldots, l-1,l+1\}}^\infty=0$ and the extra subset $F^{\SO}_{l+1}$ added to $F^{\Sp}$ does not affect the proof, or $p^\infty_{\{1,\ldots, l-1,l+1\}}\neq 0$ but then 
\[e_r\in [p_Z^\infty] \subset [p^\infty_{F_{l+1}}]=[p_{1,\ldots, l-1,l+2}^\infty]\]
and the only way this can happen is that $[p_{1,\ldots, l-1,l+2}^\infty]=\mathrm{Span}(e_1,\ldots, e_l)$. But then $e_{l+1}=e_{n+1-r}$ is not contained in the only problematic set $[p_{\{1,\ldots, l-1,l+1\}}]^\infty$ and the proof of the symplectic case works here again. 

\item The second case in Lemma \ref{secondlemmasymplectic} does not
make sense in the orthogonal case: $-2\a_l$ is not a root for $\SO_{2l}$. But Lemma \ref{lemma:u1orthogonal} tells us that $u=l$ can not happen and in fact $u\le l-1$ ensures that the extra set $\{1,\ldots, l-1,l+1\}$ which we added to $F^{\Sp_{2l}}$ does not affect the proof. Indeed, this is clear when $u\le l-2$ because in this case $n-u\ge l+2$ and hence to prove that $T^{n-u,e_{n+1-u}}$ fixes $p^\infty$ it is enough to have the following weaker version of \eqref{pvinfty3}:
\[p_{\{1,\ldots, v\}}^\infty=0 \text{ for } l+2 \le v \le n-u.\] 
But subsets with $\max(V)\ge l+2$ are the same in $F^\Sp$ and $F^\SO$ and so this follows exactly the same way as \eqref{pvinfty3}. 

If $u=l-1$ and $r\ge l+2$ then $p_{\{1,\ldots, l-1,l+1\}}^\infty=0$ and $F_{l+1}^{\SO}$ does not make any difference.

Finally, if $u=l-1$ is the only element of the vanishing spectrum and $r=l+1$ then $\lim_{m\to \infty}b_{ll}^{(m)}\neq 0$ and therefore $\lim_{m\to \infty}|b_{l+1l+1}^{(m)}|=\lim_{m\to \infty}|(b_{ll}^{(m)})|^{-1} <\infty$. But then the coefficient of $e_1\wedge \ldots \wedge e_{l-1}\wedge e_{l+1}$ in $p_{\{1,\ldots, l-1,l+1\}}^\infty$ is 
\[\lim_{m\to \infty}(b_{11}^{(m)} \cdots b_{l-1l-1}^{(m)}) \cdot \lim_{m\to \infty}b^{(m)}_{l+1l+1}=0\]
because the first limit is $0$ (the $l-1$th term tends to $0$, the rest to some nonzero constant) and the second limit is finite. Therefore 
\[e_r \in [p_Z^\infty] \subset [p_{\{1,\ldots, l-1,l+1\}}^\infty] \subset [\mathrm{Span}(e_1,\ldots, e_{r-1})],\]
a contradiction.
\end{itemize}

Now assume $n=2l+1$. Since the diagonal entry $b_{l+1,l+1}^{(m)}$ is constant $1$ in $\SO_n$, $\bu$ can not contain $l+1$ and Lemma \ref{lemma:u1} holds without change. We furthermore add the following observation.
\begin{lemma}
$\calb^{\SO,l+1}_\bu=\emptyset$ for arbitrary $\bu=\{u_1<\ldots <u_s\}$. 
\end{lemma} 

\begin{proof}
Assume $p^\infty=\lim_{m\to \infty}b^{(m)}p_{\tF} \in \calb^{\SO,l+1}_\bu$. By definition there is a $Z\in \tF$ with $\t^F_Z \ge u_1$ such that $p_Z^\infty \neq 0$ and 
\begin{enumerate}
\item $[p^{\infty}_Z] \subset \bigcap_{\substack{V\in \tF, p^{\infty}_{V}\neq 0 \\ \t_V \ge u_1}}[p^{\infty}_{V}]$
\item $\omega([p_Z^\infty])=l+1$. For this to happen $\max(Z)=z \ge l+1$ must hold, and therefore $Z=\{1,\ldots, z\}$. But if $\omega([p_Z^\infty])=l+1$ then $[p_Z^\infty]=[p_{\{1,\ldots, z\}}^\infty]=[e_1 \wedge \ldots \wedge e_z]$ is a subspace of $\mathrm{Span}(e_1,\ldots, e_{l+1})$ by definition which means that $z=l+1$ and $Z=\{1,\ldots, l+1\}$. Then 
\[0\neq p_Z^\infty=\lim_{m\to \infty} (\prod_{i=1}^{l+1} b_{ii}^{(m)}) \cdot \bigwedge_{i=1}^{l+1}e_i\]
But $b^{(m)}\in \SO_{2l+1}$ and therefore the diagonal entry $b_{l+1l+1}^{(m)}=1$ for all $m$. Hence
\[p_{\{1,\ldots, l\}}^\infty=\lim_{m\to \infty} (\prod_{i=1}^{l} b_{ii}^{(m)}) \cdot \bigwedge_{i=1}^{l}e_i\neq 0,\]
which means that $\{1,\ldots, l\}$ is a minimal subset for $p^\infty$ contradicting to the minimality of $Z$ with respect to $\preceq$ (see Definition \ref{def:bur}) because $\max(\{1,\ldots, l\}=l<\max(Z)=l+1$.

%In particular 
%\[p_V^\infty =0 \text{ for } l+1\le \max(V) <r.\]
\end{enumerate}
\end{proof}

In particular, this means that either $r\le l$ or $r \ge l+2$ and the proof of Theorem \ref{prop:symplectic} applies again without change, including Lemma \ref{firstlemmasymplectic} and Lemma \ref{secondlemmasymplectic}.
\end{proof}

\section{A partial result for general regular subgroups of $\SL_n$}\label{strategy}

This section gives partial affirmative answer to the Popov-Pommerening conjecture for general regular subgroups of $\SL_n$ corresponding to arbitrary closed family $S\subset R^+$. We prove Theorem \ref{mainc}. Let $G$ be a connected, simply connected, simple linear algebraic group over the algebraically closed subfield $k$ of $\CC$, and $U_S\subset G$ a unipotent subgroup normalized by a maximal torus $T$ of $G$ corresponding to the closed subset $S \subset R^+$, where $U_S$ is not necessarily block regular.  

\begin{definition} 
We call $G(\overline{T\cdot p_{\tS}}) \subset \overline{G\cdot p_{\tS}} \subset \calw_{\tS}$ the toric closure of $G\cdot p_{\tS}$. Points and components of $G(\overline{T\cdot p_{\tS}})\setminus G\cdot p_{\tS}$ are called toric boundary points and components. 
\end{definition}
 We are ready to prove Theorem \ref{mainc} on toric boundary components. Unfortunately we cannot prove the same for non-toric boundary components, that is, components of $\overline{G\cdot p_{\tS}} \setminus G(\overline{T\cdot p_{\tS}})$. 

\begin{proof}[Proof of Theorem \ref{mainc}] Let $T \subset \SL_n$ be the diagonal torus.  
Points of $\overline{T\cdot p_{\tS}}$ are limits of the form
\[p^\infty=\lim_{m\to \infty} 
\left( \begin{array}{cccc}
b_{11}^{(m)} & 0 & \cdots & 0\\
0 & b_{22}^{(m)} &   & \\
\vdots & & \ddots & 0 \\
0 & & 0 & b_{nn}^{(m)}
\end{array}\right)\cdot p_{\tS} =\bigoplus_{V\in \tS} p^\infty_V\]
where $p^\infty_V=\lim_{m\to \infty} \wedge_{i \in V} b_{ii}^{(m)}e_i$.
According to Lemma \ref{borelorbit} if $p^\infty$ is a boundary point, that is $p^\infty \in \overline{T\cdot p_{\tS}} \setminus T\cdot p_{\tS}$, then there is a smallest index $1\le s \le n$ such that 
\[\lim_{m\to \infty}b_{ss}^{(m)}=0.\]

%Let $s,t$ be the smallest indices with these properties.
%Assume that $t<s$ holds. Then $\lim_{m\to \infty}b_i^{(m)}\in \CC \setminus \{0\}$ for $i<t$ by the minimality of $t$ and $s$ and therefore for the subset $[t]=\{1,\ldots, t\} \in \tS$ 
%\[p^\infty_{[t]}= \lim_{m\to \infty}\prod_{i=1}^t b_{tt}^{(m)} \wedge_{i=1}^t e_i\]
%does not exist, so $s<t$ must hold.  
Define
\[t=\min\{j:\exists V \in \tS \text{ such that } s\in V, j=\max(V) \text{ and } p^{\infty}_V \neq 0\}.\]
Note that $t$ is well-defined because $V=\{1,\ldots, n\}\in \tS$ and $\lim_{m\to \infty}\Pi_{i=1}^n b_{ii}^{(m)}=1$, so the defining set above is nonempty. Furthermore $t>s$ holds by the minimality of $s$. We call $(s,t)$ the type of $p^\infty$. 

\begin{rem}
If the vanishing spectrum of $(b^{(m)})$ is $\bu=\{u_1<\ldots u_s\}$ then $s=u_1$ and $s$ is uniquely determined by $p^\infty$ according to Remark \ref{u1determined}. Moreover, $t$ plays the role of the width of $p^\infty$ and it is again determined by $p^\infty$. 
\end{rem} 

Let $Z\in \tS$ be one of the minimising subsets in the definition of $t$, that is  
\begin{equation}\nonumber
s \in Z, t=\max(Z) \text{ and } p^\infty_{Z}=\lim_{m\to \infty}\Pi_{i \in Z}b_{ii}^{(m)} \wedge_{i\in Z}e_i\neq 0.
\end{equation}
We prove that $p^{\infty}$ is $(s,e_t)$-fixed (see Definition \ref{ivfixed}). Assume there is a $V \in \tS$ such that 
\begin{equation}\label{crucial2}
 p^\infty_V=\lim_{m\to \infty} (\prod_{i\in V}b_{ii}^{(m)})\cdot \wedge_{i\in V} e_i\neq 0, s \in V  \text{ but } t \notin V.
\end{equation}
Now $V \cup Z,V \cap Z \in \tS$ and 
\[\lim_{m\to \infty} \Pi_{i\in V \cup Z}b_{ii}^{(m)}=
\lim_{m\to \infty} \frac{\Pi_{i\in V}b_{ii}^{(m)}\Pi_{i\in Z}b_{ii}^{(m)}}{\Pi_{i\in V\cap Z}b_{ii}^{(m)}}.\]
The limit of the numerator is finite and nonzero from the definition of $V$ and $Z$.
But $s \in V\cap Z$ and $t\notin V$ so $\max(V\cap Z)<t$ and therefore by the definition of $t$ the limit of the denominator is $0$. This is a contradiction as the left hand side is the coefficient of $\wedge_{i\in V\cup Z}e_i$ in $p^{\infty}_{V \cup Z}$. 
So there is no $V\in \tS$ satisfying \eqref{crucial2} which means that $p^\infty$ is fixed by $T^{s,e_t}(\l) \in \SL_n(\CC)$ and therefore $p^\infty$ is $(s,e_t)$-fixed.
For $1\le s<t \le n$ let 
\[\calb_{s,t}=\{p^\infty \in \overline{T\cdot p_{\tS}} \setminus T\cdot p_{\tS}: \text{ the type of } p^\infty \text{ is } (s,t)\}.\] 
Then 
\[\SL_n \cdot \overline{T\cdot p_{\tS}} \setminus T\cdot p_{\tS}=\bigcup_{1\le s<t\le n} \SL_n \cdot \calb_{s,t}.\]
We adapt the proof of Lemma \ref{fundamentalcor} to show that 
\[\dim(\SL_n \cdot \calb_{s,t}) \le \dim(\SL_n \cdot p_{\tS})-2,\]
which implies Theorem \ref{mainc}.
Let's start with the observation that $\calb_{s,t}$ is $T$-invariant and also $U_S$-invariant for any $1\le s<t\le n$.  This latter follows from the fact that $U_S$ is normalized by $T$ and fixes $p_{\tS}$ and therefore $U_S$ fixes each point in $T\cdot p_{\tS}$ and, then, each point in $\calb_{s,t}$.
Consider the map 
\[\varphi: \SL_n(\CC) \times \calb_{s,t} \to \calw_{\tS},\ \ \ \varphi(g,w) \mapsto g \cdot w.\]
Let $w\in \calb_{s,t}$. Since $\calb_{s,t} \subset \calw_{\tS}$ is $U_S \rtimes T$-invariant, the fibre $\varphi^{-1}(g\cdot w)$ contains 
\[(g(hT^{s,e_t}(\lambda))^{-1},(hT^{s,e_t}(\lambda))\cdot w)\]
for $h\in U_S \rtimes T, \lambda\in \CC$. Since $\{T^{s,e_t}(\l):\l \in \CC\} \cap U_S \rtimes T=\{1\}$, 
\[\dim(\{hT^{s,e_t}(\lambda):h\in U_S \rtimes T,\l \in \CC\}=\dim(U_S)+n+1\] 
and we get    
\begin{multline}\nonumber
\dim(\im(\varphi))=\dim(\SL_n)+\dim(\calb_{s,t})-\dim(\mathrm{fibre})\le \dim(\SL_n)+\dim(\overline{T \cdot p_{\tS}})-1-(\dim(U_S)+n+1)=\\
=\dim \SL_n(\CC)/U_S -2=\dim \overline{\SL_n(\CC)\cdot p_{\tilde{S}}}-2.
\end{multline}

\end{proof}
\section{A remark on configuration varieties and Bott-Samelson varieties}
Configuration varieties are a powerful tool in representation theory and geometry of the reductive group $G$. If $B\subset G$ is a Borel subgroup then these varieties are certain subvarieties in the product of flag varieties $(G/B)^l$. In \cite{magyar} Magyar describes them as closures of $B$-orbits in $(G/B)^l$, which is relevant to our construction, and therefore we give a short summary in the special case when $G=\SL_n(\CC)$, keeping \cite{magyar} as the leading reference.  

%Bott-Samelson varieties are iterated
%$\PP^1$-fibrations over the flag manifold $G/B$ where each has a natural, birational map to $G/B$. %They "factor" the flag variety
%into a "product" of projective lines, and the Schubert
%subvarieties themselves lift to iterated $\PP^1$-fibrations under this map. They serve as %desingularizations of the so-called configuration varieties, which are relevant to our construction. 

Let $B_n \subset \SL_n(\CC)$ denote the Borel of upper triangular matrices. Define a {\em subset family} to be a collection $D=\{C_1,\ldots, C_m\}$ of subsets $C_k \subset [n]=\{1,\ldots, n\}$. The order is irrevelant in the family, and we do not allow repetitions. 
Let $\CC^n$ have the standard basis $\{e_1,\ldots, e_n\}$ and for any subset $C\subset [n]$ define the subspace
\[\calq^C=\mathrm{Span}_\CC\{e_j:j\in C\}\in \grass(|C|,n).\] 
This point is fixed by the diagonal torus $T \subset \SL_n(\CC)$, and so we can associate a $T$-fixed point to the subset family in the product of Grassmannaians:
\[z_D=(\calq^{C_1},\ldots, \calq^{C_m})\in \grass(D)=\grass(|C_1|,n)\times \ldots \times \grass(|C_m|,n)\] 
The {\em configuration variety} of $D$ is the closure of the $\SL_n(\CC)$-orbit of $z_D$:
\[\cala_D=\overline{\SL_n(\CC) \cdot z_D}\subset \grass(D),\]
and the {\em flagged configuration variety} is the closure of the Borel orbit:
\[\cala_D^{B}=\overline{B_n \cdot z_D}\subset \grass(D).\]
There is an important class of subset families associated to subsets of the Weyl group $W$ of the reductive group. In the case of $\SL_n(\CC)$ to a list of permutations
$\bw=(w_1,\ldots,w_l)$, $w_k\in W$, and a list of indices $\bj=(j_1,\ldots, j_l),1\le j_k\le n$, 
we associate a subset family:
\[D=D_{\bw,\bj}=\{w_1[j_1],\ldots,w_l[j_l]\},\]
where $w[j] =\{w(1),w(2),\ldots,w(j)\}$.
Now suppose the list of indices $\bi=(i_1,i_2,\ldots,i_l)$ encodes a reduced decomposition $w=s_{i_1}s_{i_2}\ldots s_{i_l}$ of a permutation into a minimal number of simple
transpositions. Let $w=(s_{i_1},s_{i_1}s_{i_2},\ldots ,w)$ and define the {\em reduced chamber family} $D_{\bi}:=D_{\bw,\bi}$. The {\em full chamber family} is 
\[D_{\bi}^+=\{[1],[2],\ldots,[n]\}\cup D_{\bi}.\]
A subfamily $D \subset D_\bi^+$ is called a {\em chamber subfamily}. Le-Clerk and Zelevinsky in \cite{clerkzelevinsky} gave a characterization of these as follows. For two sets $S_1,S_2 \subset [n]$ we say $S_1$ is elementwise less than $S_2$, $S_1<^e S_2$, if
$s_1<s_2$ for all $s_1\in S_1,s_2\in S_2$. Now, a pair of subsets $C_1,C_2\subset [n]$ is strongly
separtated if
$(C_1 \setminus C_2)<^e (C_2\setminus C_1)$ or 
$(C_2 \setminus C_1)<^e (C_1\setminus C_2)$
holds. A family of subsets is called {\em strongly separated} if each pair of subsets in it is strongly separated. Le Clerk and Zelevinsky proved that a subset family $D$ is a chamber sufamily, $D \subset D_{\bi}$ for some $\bi$ if and only if it is strongly separated. 

If $D=D_\bi$ is a chamber family, then the corresponding flagged configuration variety $\cala_{D}^B$ is called {\em Bott-Samelson variety}. 

Very little is known about general configuration varieties. They can be badly singular, however, certain of them are well understood because they can be desingularized by Bott-Samelson varieties, which are always smooth. 

The link to our construction is straightforward; if $\cals=\{S_1,\ldots, S_n\}$ denotes the subset family formed from the columns of the star pattern $S$ corresponding the regular subgroup $U_S\subset \SL_n(\CC)$, then there is a natural map
\[\pi_S:(\SL_n(\CC)\cdot p_{\tS}) \to \cala_{\cals} \text{ where } A\cdot p_{\tS}  \mapsto A \cdot z_{\cals} \text{ for } A\in \SL_n(\CC).\]
This  map does not extend to the closure. In short, our space $\overline{\SL_n(\CC) \cdot p_{\tS}}$ is a weighted affine configuration space where the weights are different tensor powers of $\te_n$. 
Unfortunately, the subset family $\cals=\{S_1,\ldots, S_n\}$ is not necessarily strongly separated and therefore not a chamber subfamily in general. This leaves the question of desingularisation of $\overline{\SL_n(\CC) \cdot p_{\tS}}$ open.

\bibliographystyle{amsalpha}
\bibliography{biblio}

\end{document}